\date{\today}
\definecolor{darkblue}{rgb}{0, 0, .6}
\theoremstyle{definition}
\newtheorem{theorem}{Theorem}[section]
\newtheorem{lemma}[theorem]{Lemma}
\newtheorem{corollary}[theorem]{Corollary}
\newtheorem{proposition}[theorem]{Proposition}
\newtheorem{example}[theorem]{Example}
\newtheorem{remark}[theorem]{Remark}
\numberwithin{equation}{section}
\newcommand{\Z}{\mathbb{Z}}
\newcommand{\N}{\mathbb{N}}
\newcommand{\A}{\mathcal{A}}
\newcommand{\C}{\widetilde{C}}
\renewcommand{\O}{\mathcal{O}}
\newcommand{\E}{\mathcal{E}}
\newcommand{\z}{\mathsf{z}}
\newcommand{\x}{\mathsf{x}}
\renewcommand{\u}{\mathsf{u}}
\renewcommand{\v}{\mathsf{v}}
\newcommand{\wtri}{\vartriangle}
\newcommand{\btri}{\blacktriangle}
\renewcommand{\a}{\mathbf{a}}
\DeclareMathOperator{\TL}{TL}
\DeclareMathOperator{\DTL}{\mathbb{D}TL}
\renewcommand{\P}{\mathcal{P}}
\newcommand{\V}{\mathcal{V}}
\newcommand{\D}{\mathbb{D}}
\newcommand{\wcirc}{\circ}
\newcommand{\bcirc}{\bullet}
\newcommand{\supp}{\mathrm{supp}}
\renewcommand{\L}{\mathcal{L}}
\newcommand{\R}{\mathcal{R}}
\newcommand{\w}{\mathsf{w}}
\renewcommand{\H}{\mathcal{H}}
\DeclareMathOperator{\FC}{FC}
\newcommand\xxaxis{0}
\newcommand\yyaxis{90}
\newcommand\heapblock[3]{\fill[draw=black, fill=gray!30, rounded corners, line width=1.1pt, shift={(\xxaxis:#1)},shift={(\yyaxis:#2)}] (-1,-0.5) rectangle (1,0.5);\node at (#1,#2) {\scriptsize $#3$};}
\newcommand\heapblank[2]{\fill[fill=white, dotted, draw=black, line width=1.1pt, rounded corners, shift={(\xxaxis:#1)},shift={(\yyaxis:#2)}] (-1,-0.5) rectangle (1,0.5);}
\newcommand\lp[2]{\draw[out=90,in=90] (#1,#2) to (#1 + 1,#2) [out=-90,in=-90] to (#1,#2);}
\newcommand\blacktrilp[2]{\draw[out=90,in=90] (#1,#2) to node[blacktri, pos=0.5]{} (#1 + 1,#2) [out=-90,in=-90] to (#1,#2);}
\newcommand\whitetrilp[2]{\draw[out=90,in=90] (#1,#2) to node[whitetri, pos=0.5]{} (#1 + 1,#2) [out=-90,in=-90] to (#1,#2);}
\newcommand\blackwhitetrilp[2]{\draw[out=90,in=90] (#1,#2) to node[blacktri, pos=0.5]{} (#1 +1,#2) [out=-90,in=-90] to node[whitetri, pos=0.5]{} (#1,#2);}
\begin{document}

\tikzstyle{blacktri} = [draw=black, fill=black, rotate=-90, sloped, regular polygon, regular polygon sides=3, inner sep=1.3pt]
\tikzstyle{whitetri} = [draw=black, fill=white, rotate=-90, sloped, regular polygon, regular polygon sides=3, inner sep=1.3pt]
\tikzstyle{blackcirc} = [draw=black, fill=black, shape=circle, inner sep=1.8pt]
\tikzstyle{whitecirc} = [draw=black, fill=white, shape=circle, inner sep=1.8pt]

\title[Diagram calculus for a type affine $C$ Temperley--Lieb algebra, II]{Diagram calculus for a type affine $C$ \\ Temperley--Lieb algebra, II}

\author[D.C.~Ernst]{Dana C.~Ernst}
\address{Department of Mathematics and Statistics, Northern Arizona University, Flagstaff, AZ 86011}
\email{dana.ernst@nau.edu}
\urladdr{http://danaernst.com}

\subjclass[2000]{20F55, 20C08, 57M15}
\keywords{diagram algebra, Temperley--Lieb algebra, Coxeter groups, heaps}


\begin{abstract}
In a previous paper, we presented an infinite dimensional associative diagram algebra that satisfies the relations of the generalized Temperley--Lieb algebra having a basis indexed by the fully commutative elements of the Coxeter group of type affine $C$.  We also provided an explicit description of a basis for the diagram algebra.  In this paper, we show that the diagrammatic representation is faithful and establish a correspondence between the basis diagrams and the so-called monomial basis of the Temperley--Lieb algebra of type affine $C$.
\end{abstract}

\maketitle


\begin{section}{Introduction}\label{sec:intro}

The Temperley--Lieb algebra $\TL(A_n)$, invented by Temperley and Lieb in 1971~\cite{Temperley1971}, is a finite dimensional associative algebra that arose in the context of statistical mechanics.  Penrose~\cite{Penrose1971} and Kauffman~\cite{Kauffman1987} showed that this algebra can be realized as a diagram algebra having a basis given by certain diagrams, where multiplication is determined by applying local combinatorial rules to the diagrams. Jones~\cite{Jones1999} showed that $\TL(A_n)$ occurs naturally as a quotient of the Hecke algebra of type $A_n$.  

The realization of the Temperley--Lieb algebra as a Hecke algebra quotient was generalized by Graham~\cite{Graham1995} to the case of an arbitrary Coxeter system. A key feature of the quotient algebra is that it retains some of the relevant structure of the Hecke algebra, yet is small enough that computation of the leading coefficients of the Kazhdan--Lusztig polynomials is often much simpler. In this paper, we study the generalized Temperley--Lieb algebra of type $\C_{n}$, denoted $\TL(\C_{n})$, and describe a special basis, called the monomial basis, which is indexed by the fully commutative elements of the underlying Coxeter group. Our goal is to determine a faithful diagrammatic representation of the Temperley--Lieb algebra of type $\C_n$.

In~\cite{Ernst2012}, we constructed an infinite dimensional associative diagram algebra $\DTL(\C_n)$ and easily verified that this algebra satisfies the relations of $\TL(\C_n)$, thus showing that there is a surjective algebra homomorphism from $\TL(\C_n)$ to $\DTL(\C_n)$.  Moreover, we described a set of ``admissible diagrams" by providing a combinatorial description of the allowable edge configurations involving diagram decorations, and we accomplished the more difficult task of proving that this set of diagrams forms a basis for $\DTL(\C_n)$.  However, due to length considerations, it remained to be shown that our diagrammatic representation is faithful and that each admissible diagram corresponds to a unique monomial basis element of $\TL(\C_n)$.  

The main result of this paper (Theorem~\ref{thm:main result}) establishes the faithfulness of our diagrammatic representation and the correspondence between the admissible diagrams and the monomial basis elements of $\TL(\C_n)$.  The diagram algebra $\DTL(\C_n)$ presented here and in~\cite{Ernst2012} is the first faithful representation of an infinite dimensional non-simply-laced generalized Temperley--Lieb algebra. Unlike in the finite dimensional case, we cannot rely on counting arguments.  Instead, we make use of the classification in~\cite{Ernst2010} of the non-cancellable elements in Coxeter groups of type $\C_n$.

Besides being visually appealing, studying these types of diagrammatic representations can provide insight into the underlying algebraic structure.  In particular, there are direct applications related to Kazhdan--Lusztig theory. Similar to the work carried out in~\cite{Green2007a,Green2007}, in a future paper, we plan to construct a Jones-type trace on the Hecke algebra of type $\C_n$ using our diagrammatic representation of $\TL(\C_n)$, thus allowing us to quickly compute the leading coefficients of the infinitely many Kazhdan--Lusztig polynomials indexed by pairs of fully commutative elements.  

It turns out that the faithfulness of a diagrammatic representation of an arbitrary generalized Temperley--Lieb algebra is intimately related to Property B of~\cite{Green2007a}, which is a statement about the existence of a symmetric, anti-associative, nondegenerate bilinear form on the Temperley--Lieb algebra.  Property B was first conjectured to hold for arbitrary generalized Temperley--Lieb algebras in~\cite{Green2001} and remains an open problem.  Broadly speaking, Property B is more or less an algebraic reformulation of the existence of a diagram calculus for a canonical basis as in~\cite{Green1999}. If one could prove Property B directly, it could be used to identify a candidate basis of diagrams as the canonical basis. Conversely, if the canonical basis of diagrams is known, one can prove Property B for the corresponding Temperley--Lieb algebra.

The paper is organized as follows. Section~\ref{sec:prelim} of this paper is concerned with introducing the necessary notation and terminology of Coxeter groups, fully commutative elements, heaps, Hecke algebras, and generalized Temperley--Lieb algebras.  The concept of a heap introduced in Section~\ref{subsec:heaps} will be our main tool for visualizing combinatorial arguments required to prove several technical lemmas appearing in Section~\ref{subsec:prep lemmas}.  In Section~\ref{sec:combinatorics}, we study some of the combinatorics of Coxeter groups of type $\C_n$.  In particular, we introduce weak star reductions and the type I, type II, and non-cancellable elements of a Coxeter group of type $\C_n$, as well as establish several intermediate results.   The goal of Section~\ref{sec:diagram algebras} is to familiarize the reader with the conventions and terminology of diagram algebras necessary to define the diagram algebra $\DTL(\C_n)$ and to describe the admissible diagrams.  Our main result, which establishes the faithful diagrammatic representation of $\TL(\C_n)$ by $\DTL(\C_n)$, finally comes in Section~\ref{subsec:injectivity} after proving a few additional lemmas in Sections~\ref{subsec:hom} and~\ref{subsec:additional prep lemmas}.  Lastly, we discuss the implications of our results and future research in Section~\ref{sec:closing}.

This paper is an adaptation of results from the author's PhD thesis~\cite{Ernst2008}, which was directed by R.M.~Green at the University of Colorado Boulder.  However, the notation has been improved and many of the arguments have been streamlined.
\end{section}


\begin{section}{Preliminaries}\label{sec:prelim}


\begin{subsection}{Coxeter groups}\label{subsec:coxeter groups}

A \emph{Coxeter system} is a pair $(W,S)$ consisting of a distinguished (finite) set $S$ of generating involutions and a group $W$, called a \emph{Coxeter group}, with presentation
\[
W = \langle S \mid (st)^{m(s, t)} = e \text{ for } m(s, t) < \infty \rangle,
\]
where $e$ is the identity, $m(s,t) = 1$ if and only if $s = t$, and $m(s,t) = m(t,s)$. It turns out that $m(s, t)$ is the order of $st$.  Since the elements of $S$ have order two, the relation $(st)^{m(s,t)} = e$ can be written as
\[
\underbrace{sts \cdots}_{m(s,t)} = \underbrace{tst \cdots}_{m(s,t)}
\]
with $m(s,t) \geq 2$ factors.

Given a Coxeter system $(W,S)$, a word $s_{x_1}s_{x_2}\cdots s_{x_m}$ in the free monoid $S^*$ is called an \emph{expression} for $w\in W$ if it is equal to $w$ when considered as a group element. If $m$ is minimal among all expressions for $w$, the corresponding word is called a \emph{reduced expression} for $w$. In this case, we define the \emph{length} of $w$ to be $\ell(w)=m$. Each element $w \in W$ can have several different reduced expressions that represent it.  If we wish to emphasize a fixed, possibly reduced, expression for $w\in W$, we represent it in \textsf{sans serif} font, say $\w=s_{x_1}s_{x_2}\cdots s_{x_m}$, where each $s_{x_i} \in S$.  A product of group elements $w_{1}w_{2}\cdots w_{r}$ with $w_{i} \in W$ is called \emph{reduced} if $\ell(w_{1}w_{2}\cdots w_{r})=\sum \ell(w_{i})$.  

Matsumoto's Theorem~\cite[Theorem 1.2.2]{Geck2000} states that if $w \in W$, then every reduced expression for $w$ can be obtained from any other by applying a sequence of \emph{braid moves} of the form 
\[
{\underbrace{sts \cdots }_{m(s,t)} } \mapsto {\underbrace{tst \cdots}_{m(s,t)}}
\]
where $s,t \in S$ and each factor in the move has $m(s,t)$ letters.  The \emph{support} of an element $w \in W$, denoted $\supp(w)$, is the set of all generators appearing in any reduced expression for $w$.

Given a reduced expression $\w$ for $w \in W$, we define a \emph{subexpression} of $\w$ to be any subsequence of $\w$. We will refer to a subexpression consisting of a string of consecutive symbols from $\w$ as a \emph{subword} of $\w$.

The sets $\L(w)=\{s \in S\mid \ell(sw) < \ell(w)\}$ and $\R(w)=\{s \in S\mid \ell(ws) < \ell(w)\}$ are called the \emph{left} and \emph{right descent sets} of $w$, respectively.  It turns out that $s \in \L(w)$ (respectively, $\R(w)$) if and only if $w$ has a reduced expression beginning (respectively, ending) with $s$.

Given a Coxeter system $(W,S)$, the associated \emph{Coxeter graph} is the graph $\Gamma$ with vertex set $S$ and edges $\{s,t\}$ labeled with $m(s,t)$ for all $m(s,t)\geq 3$.  If $m(s,t)=3$, it is customary to leave the corresponding edge unlabeled.  Given a Coxeter graph $\Gamma$, we can uniquely reconstruct the corresponding Coxeter system $(W,S)$.  In this case, we say that $(W,S)$, or just $W$, is of type $\Gamma$. If $(W,S)$ is of type $\Gamma$, for emphasis, we may write $W$ and $S$ as $W(\Gamma)$ and $S(\Gamma)$, respectively.  Note that generators $s$ and $t$ are connected by an edge in the Coxeter graph $\Gamma$ if and only if $s$ and $t$ do not commute~\cite{Humphreys1990}.

The main focus of this paper will be the Coxeter systems of types $B_n$ and $\C_n$, which are defined by the Coxeter graphs in Figures~\ref{typeB} and~\ref{typeC}, respectively, where $n\geq 2$.

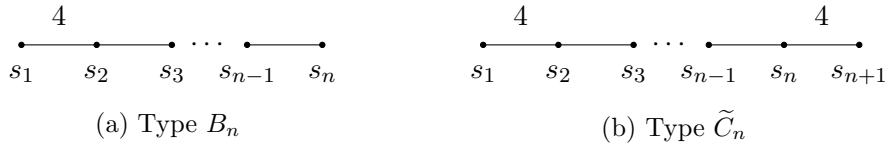
\begin{figure}[!ht]
\subcaptionbox{Type $B_n$\label{typeB}}[.4\textwidth]{
\begin{tikzpicture}
\draw[fill=black] \foreach \x in {1,2,3,4,5} {(\x,10) circle (1pt)};
\draw \foreach \x in {1,2,3} {(\x,10) node[label=below:$s_{\x}$]{}};
\draw (1.5,10) node[label=above:$4$]{};
\draw {(4,10) node[label=below:$s_{n-1}$]{}};
\draw {(5,10) node[label=below:$s_{n}$]{}};
\draw {(3.5,10) node[]{$\cdots$}};
\draw[-] (1,10) -- (3,10);
\draw[-] (4,10) -- (5,10);
\end{tikzpicture}
}
\subcaptionbox{Type $\C_n$\label{typeC}}[.4\textwidth]{
\begin{tikzpicture}
\draw[fill=black] \foreach \x in {1,2,3,4,5,6} {(\x,10) circle (1pt)};
\draw \foreach \x in {1,2,3} {(\x,10) node[label=below:$s_{\x}$]{}};
\draw (1.5,10) node[label=above:$4$]{};
\draw {(4,10) node[label=below:$s_{n-1}$]{}};
\draw {(5,10) node[label=below:$s_{n}$]{}};
\draw {(6,10) node[label=below:$s_{n+1}$]{}};
\draw {(3.5,10) node[]{$\cdots$}};
\draw (5.5,10) node[label=above:$4$]{};
\draw[-] (1,10) -- (3,10);
\draw[-] (4,10) -- (6,10);
\end{tikzpicture}
}
\caption{Coxeter graphs corresponding to Coxeter systems of types $B_{n}$ and $\C_{n}$.}
\end{figure}

We can obtain $W(B_{n})$ from $W(\C_{n})$ by removing the generator $s_{n+1}$ and the corresponding relations~\cite[Chapter 5]{Humphreys1990}.  We also obtain a Coxeter group of type $B_n$ if we remove the generator $s_{1}$ and the corresponding relations.  To distinguish these two cases, we let $W(B_{n})$ denote the subgroup of $W(\C_{n})$ generated by $\{s_{1}, s_{2}, \dots, s_{n}\}$ and we let $W(B'_{n})$ denote the subgroup of $W(\C_{n})$ generated by $\{s_{2}, s_{3}, \dots, s_{n+1}\}$.  It is well known that $W(\C_{n})$ is an infinite Coxeter group while $W(B_{n})$ and $W(B'_{n})$ are both finite~\cite[Chapters 2 and 6]{Humphreys1990}.

\end{subsection}


\begin{subsection}{Fully commutative elements}\label{subsec:FC}

Let $(W,S)$ be a Coxeter system of type $\Gamma$ and let $w \in W$. Following Stembridge~\cite{Stembridge1996}, we define a relation $\sim$ on the set of reduced expressions for $w$.  Let $\w$ and $\w'$ be two reduced expressions for $w$.  We define $\w \sim \w'$ if we can obtain $\w'$ from $\w$ by applying a single commutation move of the form $st \mapsto ts$, where $m(s,t)=2$.  Now, define the equivalence relation $\approx$ by taking the reflexive transitive closure of $\sim$.  Each equivalence class under $\approx$ is called a \emph{commutation class}. If $w$ has a single commutation class, then we say that $w$ is \emph{fully commutative} (FC).  According to Stembridge~\cite{Stembridge1996}, an element $w$ is FC if and only if no reduced expression for $w$ contains a subword of the form $sts \cdots$ of length $m(s,t) \geq 3$.  The set of FC elements of the Coxeter system $(W,S)$ of type $\Gamma$ is denoted by $\FC(\Gamma)$.

The elements of $\FC(\C_{n})$ are precisely those whose reduced expressions avoid consecutive subwords of the following types:
\begin{enumerate}
\item $s_{i}s_{j}s_{i}$ for $|i-j|=1$ and $1< i,j < n+1$;
\item $s_{i}s_{j}s_{i}s_{j}$ for $\{i,j\}=\{1,2\}$ or $\{n,n+1\}$.
\end{enumerate}
Note that the FC elements of $W(B_{n})$ and $W(B'_{n})$ avoid the respective subwords above.

In~\cite{Stembridge1996}, Stembridge classified the Coxeter groups that contain a finite number of FC elements.  The Coxeter group $W(\C_{n})$ contains an infinite number of FC elements, while $W(B_{n})$ (and hence $W(B'_n)$) contains finitely many~\cite[Theorem 5.1]{Stembridge1996}.  There are examples of infinite Coxeter groups that contain a finite number of FC elements.  For example, $W(E_n)$ is infinite for $n\geq 9$, but contains only finitely many FC elements.

\end{subsection}


\begin{subsection}{Heaps}\label{subsec:heaps}

Every reduced expression can be associated with a partially ordered set called a heap that will allow us to visualize a reduced expression  while preserving the essential information about the relations among the generators.  The theory of heaps was introduced in~\cite{Viennot1986} by Viennot and visually captures the combinatorial structure of the Cartier--Foata monoid of~\cite{Cartier1969}.  In~\cite{Stembridge1996,Stembridge1998}, Stembridge studied heaps in the context of FC elements, which is our motivation here.  In this section, we mimic the development found in~\cite{Billey2007,Ernst2010,Stembridge1996}.

Let $(W,S)$ be a Coxeter system.  Suppose $\w = s_{x_1} \cdots s_{x_r}$ is a fixed reduced expression for $w \in W$.  As in~\cite{Stembridge1996}, we define a partial ordering on the indices $\{1, \dots, r\}$ by the transitive closure of the relation $\lessdot$ defined via $j \lessdot i$ if $i < j$ and $s_{x_i}$ and $s_{x_j}$ do not commute.  In particular, since $\w$ is reduced, $j \lessdot i$ if $i < j$ and $s_{x_i} = s_{x_j}$ by transitivity.  This partial order is referred to as the \emph{heap} of $\w$, where $i$ is labeled by $s_{x_i}$. Note that for simplicity, instead of labelling the heap by elements of the underlying poset $\{1, 2,\ldots,r\}$, we are using the names of the corresponding generators, namely $s_{x_1},\ldots,s_{x_r}$.

It follows from~\cite[Proposition 2.2]{Stembridge1996} that heaps are well-defined up to commutation class.  That is, if $\w$ and $\w'$ are two reduced expressions for $w \in W$ that are in the same commutation class, then the heaps of $\w$ and $\w'$ are equal.  In particular, if $w$ is FC, then it has a single commutation class, and so there is a unique heap associated to $w$.

\begin{example}\label{ex:first heap}
Let $\w = s_3 s_2 s_1 s_2 s_5s_{4}s_{6}s_{5}$ be a reduced expression for $w \in \FC(\C_{5})$.  We see that $\w$ is indexed by $\{1, 2, 3, 4, 5, 6, 7, 8\}$.  As an example, $3 \lessdot 2$ since $2 < 3$ and the second and third generators do not commute.  The labeled Hasse diagram for the heap poset of $w$ is shown in Figure~\ref{fig:hasse}.
\end{example}

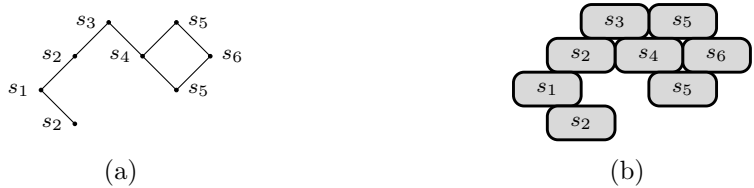
\begin{figure}[!ht]
\subcaptionbox{\label{fig:hasse}}[.4\textwidth]{ 
\begin{tikzpicture}[scale=0.45,inner sep = 2pt]
\draw [fill=black] (2,1) circle (1.5pt);
\draw (2,1) node[label=left:\scriptsize $s_2$]{};
\draw [fill=black] (1,2) circle (1.5pt);
\draw (1,2) node[label=left:\scriptsize $s_1$]{};
\draw [fill=black] (5,2) circle (1.5pt);
\draw (5,2) node[label=right:\scriptsize $s_5$]{};
\draw [fill=black] (2,3) circle (1.5pt);
\draw (2,3) node[label=left:\scriptsize $s_2$]{};
\draw [fill=black] (4,3) circle (1.5pt);
\draw (4,3) node[label=left:\scriptsize $s_4$]{};
\draw [fill=black] (6,3) circle (1.5pt);
\draw (6,3) node[label=right:\scriptsize $s_6$]{};
\draw [fill=black] (3,4) circle (1.5pt);
\draw (3,4) node[label=left:\scriptsize $s_3$]{};
\draw [fill=black] (5,4) circle (1.5pt);
\draw (5,4) node[label=right:\scriptsize $s_5$]{};
\draw [color=white] (1,.5) circle (0pt);
\draw (2,1)--(1,2)--(2,3)--(3,4)--(4,3)--(5,2)--(6,3)--(5,4)--(4,3);
\end{tikzpicture}
}
\subcaptionbox{\label{fig:first heap}}[.4\textwidth]{
\begin{tikzpicture}[scale=0.45]
\heapblock{5}{4}{s_5};
\heapblock{3}{4}{s_3};
\heapblock{6}{3}{s_6};
\heapblock{4}{3}{s_4};
\heapblock{2}{3}{s_2};
\heapblock{5}{2}{s_5};
\heapblock{1}{2}{s_1};
\heapblock{2}{1}{s_2};
\end{tikzpicture}
}
\caption{Labeled Hasse diagram and possible lattice point representation for the heap of an element from $\FC(\C_5)$.}
\end{figure}

Let $\w$ be a fixed reduced expression for $w \in W(\C_{n})$.  As in~\cite{Billey2007,Ernst2010}, we can represent a heap for $\w$ as a set of lattice points embedded in $\{1,2,\ldots,n+1\} \times \mathbb{N}$.  To do so, we assign coordinates (not unique) $(x,y) \in \{1,2,\ldots, n+1\} \times \mathbb{N}$ to each entry of the labeled Hasse diagram for the heap of $\w$ in such a way that:
\begin{enumerate}
\item An entry with coordinates $(x,y)$ is labeled $s_i$ in the heap if and only if $x = i$; 
\item An entry with coordinates $(x,y)$ is greater than an entry with coordinates $(x',y')$ in the heap if and only if $y > y'$.
\end{enumerate}

Recall that a finite poset is determined by its covering relations.  In the case of a Coxeter group of type $\C_{n}$ (and any straight line Coxeter graph), it follows from the definition that $(x,y)$ covers $(x',y')$ in the heap if and only if $x = x' \pm 1$, $y > y'$, and there are no entries $(x'', y'')$ such that $x'' \in \{x, x'\}$ and $y'< y'' < y$.  This implies that we can completely reconstruct the edges of the Hasse diagram and the corresponding heap poset from a lattice point representation. The lattice point representation of a heap allows us to visualize potentially cumbersome arguments.  Note that our heaps are upside-down versions of the heaps that appear in~\cite{Billey2007} and several other papers.  That is, in this paper entries at the top of a heap correspond to generators occurring to the left, as opposed to the right, in the corresponding reduced expression.  Our convention aligns more naturally with the typical conventions of diagram algebras.

Let $\w$ be a reduced expression for $w \in W(\C_{n})$.  We let $H(\w)$ denote a lattice representation of the heap poset in $\{1,2,\ldots,n+1\} \times \N$ described in the preceding paragraphs.  If $w$ is FC, then the choice of reduced expression for $w$ is irrelevant, in which case, we will often write $H(w)$ (note the absence of \textsf{sans serif} font) and we will refer to $H(w)$ as the heap of $w$.

Given a heap, there are many possible coordinate assignments, yet the $x$-coordinates for each entry will be fixed for all of them.  In particular, two entries labeled by the same generator may only differ by the amount of vertical space between them while maintaining their relative vertical position to adjacent entries in the heap.

Let $\w=s_{x_1}\cdots s_{x_r}$ be a reduced expression for $w \in \FC(\C_{n})$.  If $s_{x_i}$ and $s_{x_j}$ are adjacent generators in the Coxeter graph with $i<j$, then we must place the point labeled by $s_{x_i}$ at a level that is \emph{above} the level of the point labeled by $s_{x_j}$.  Because generators that are not adjacent in the Coxeter graph do commute, points whose $x$-coordinates differ by more than one can slide past each other or land at the same level.  To emphasize the covering relations of the lattice representation we will enclose each entry of the heap in a rectangle (with rounded corners) in such a way that if one entry covers another, the rectangles overlap halfway.

\begin{example}\label{ex:second heap}
If $w$ is the element from Example~\ref{ex:first heap}, then one possible lattice point representation for $H(w)$ appears in Figure~\ref{fig:first heap}.
\end{example}

When $w$ is FC, we wish to make a canonical choice for the representation of $H(w)$ by assembling the entries in a particular way.  To do this, we give all entries corresponding to elements in $\L(w)$ the same vertical position and all other entries in the heap should have vertical position as high as possible.  For example, the representation of $H(w)$ given in Figure~\ref{fig:first heap} is the canonical representation.  Note that our canonical representation of heaps of FC elements corresponds precisely to the unique heap factorization of~\cite[Lemma 2.9]{Viennot1986} and to the Cartier--Foata normal form for monomials~\cite{Cartier1969,Green2006a}.  When illustrating heaps, we will adhere to this canonical choice, and when we consider the heaps of arbitrary reduced expressions, we will only allude to the relative vertical positions of the entries, and never their absolute coordinates.  

Let $\w=s_{x_1}\cdots s_{x_r}$ be a reduced expression for $w \in \FC(\C_n)$ and suppose $s_{x_i}$ and $s_{x_j}$ equal the same generator $s_k$, so that the corresponding entries in $H(w)$ have $x$-coordinate $k$.  We say that $s_{x_i}$ and $s_{x_j}$ are \emph{consecutive} if there is no other occurrence of $s_{k}$ occurring between them in $\w$.

Let $\w=s_{x_{1}} \cdots s_{x_{r}}$ be a reduced expression for $w \in W(\C_{n})$.  We define a heap $H'$ to be a \emph{subheap} of $H(\w)$ if $H'=H(\w')$, where $\w'=s_{y_1}s_{y_2} \cdots s_{y_k}$ is a subexpression of $\w$.  We emphasize that the subexpression need not be a subword.

A subheap $H'$ of $H(\w)$ is called a \emph{saturated subheap} if whenever $s_{i}$ and $s_{j}$ occur in $H'$ such that there exists a saturated chain from $i$ to $j$ in the underlying poset for $H(\w)$, there also exists a saturated chain $i=i_{k_{1}}<i_{k_{2}}< \cdots < i_{k_{l}}=j$ in the underlying poset for $H'$ such that the same chain is also a saturated chain in the underlying poset for $H(\w)$.

Recall that a subposet $Q$ of $P$ is called convex if $y \in Q$ whenever $x < y < z$ in $P$ and $x, z \in Q$.  A \emph{convex subheap} is a subheap in which the underlying subposet is convex.

\begin{example}\label{ex:third heap}
Consider $\w$ and $w$ from Example~\ref{ex:first heap} and let $\u=s_3 s_1 s_{5}$ be the subexpression of $\w$ that results from deleting all but the first, third, and last generators of $\w$.  Then $H(\u)$ is equal to the heap in Figure~\ref{fig:subheap} and is a subheap of $H(\w)$.  However, $H(\u)$ is neither a saturated nor a convex subheap of $H(\w)$ since there is a saturated chain in $H(\w)$ from the lower occurrence of $s_{5}$ to the occurrence of $s_{3}$, but there is not a chain between the corresponding entries in $H(\u)$.  Now, let $\v=s_{5}s_{4}s_{5}$ be the subexpression of $\w$ that results from deleting all but the fifth, sixth, and last generators of $\w$.  Then $H(\v)$ equals the heap in Figure~\ref{fig:convex subheap} and is a saturated subheap of $H(\w)$, but it is not a convex subheap since there is an entry in $H(\w)$ labeled by $s_{6}$ occurring between the two consecutive occurrences of $s_{5}$ that does not occur in $H(\v)$.  However, if we do include the entry labeled by $s_{6}$, then the heap in Figure~\ref{fig:saturated subheap}  is a convex subheap of $H(\w)$.  
\end{example}

\begin{figure}[!ht]
\subcaptionbox{\label{fig:subheap}}[.3\textwidth]{
\begin{tikzpicture}[scale=0.45]
\heapblock{5}{1}{s_5};
\heapblock{3}{1}{s_3};
\heapblock{1}{1}{s_1};
\end{tikzpicture}
}
\subcaptionbox{\label{fig:convex subheap}}[.3\textwidth]{
\begin{tikzpicture}[scale=0.45]
\heapblock{5}{3}{s_5};
\heapblock{4}{2}{s_4};
\heapblock{5}{1}{s_5};
\end{tikzpicture}
}
\subcaptionbox{\label{fig:saturated subheap}}[.3\textwidth]{
\begin{tikzpicture}[scale=0.45]
\heapblock{5}{3}{s_5};
\heapblock{6}{2}{s_6};
\heapblock{4}{2}{s_4};
\heapblock{5}{1}{s_5};
\end{tikzpicture}
}
\caption{Various subheaps of the heap given in Figure~\ref{fig:first heap}.}
\label{fig:subheaps}
\end{figure}
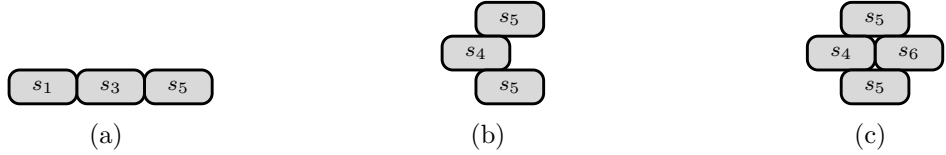

From this point on, if there can be no confusion, we will not specify the exact subexpression that a subheap arises from.  The following fact is implicit in the literature (in particular, see the proof of~\cite[Proposition 3.3]{Stembridge1996}) and follows easily from the definitions.

\begin{proposition}
Let $(W,S)$ be a Coxeter system of type $\Gamma$ and let $w \in \FC(\Gamma)$. Then $H'$ is a convex subheap of $H(w)$ if and only if $H'$ is the heap for some subword of some reduced expression for $w$.   \qed
\end{proposition}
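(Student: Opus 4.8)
The plan is to prove both inclusions directly from the definition of the heap and its lattice representation, making heavy use of the canonical (Cartier--Foata) normal form that was set up in the preceding paragraphs. Throughout, fix $w \in \FC(W)$ so that $H(w)$ is a well-defined poset, independent of the choice of reduced expression.

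\medskip
\noindent\textbf{Subwords give convex subheaps.}
First I would take a reduced expression $\w = s_{x_1}\cdots s_{x_r}$ for $w$ and a subword $\u = s_{x_a}s_{x_{a+1}}\cdots s_{x_b}$ (a consecutive subexpression). The claim is that the set of indices $\{a, a+1, \dots, b\}$, viewed as a subheap $H' = H(\u)$ of $H(\w) = H(w)$, is convex. Suppose $i < k < j$ in the poset $H(\w)$ with $i, j \in \{a,\dots,b\}$; I must show $k \in \{a,\dots,b\}$. Since the partial order on indices is the transitive closure of the relation $\lessdot$ (where $q \lessdot p$ whenever $p < q$ as integers and $s_{x_p}, s_{x_q}$ do not commute), the covering relations of $H(\w)$ can only decrease the integer index: if $p$ covers $q$ in the heap then $p > q$ as integers. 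Hence $i < k < j$ in the heap forces $i > k > j$ as integers, i.e. $j < k < i$ in $\{1,\dots,r\}$. But $i, j \in \{a,\dots,b\}$ means $a \le j < k < i \le b$, so $k \in \{a,\dots,b\}$, as needed. (Here I am using that $\u$ arises from the \emph{consecutive} block of indices, which is exactly an integer interval.) One mild subtlety: the reduced expression $\w$ is not unique, so I should remark that it suffices to exhibit \emph{one} reduced expression of $w$ in which $H'$ appears as a subword-block; since $H(w)$ is independent of the chosen reduced expression for $w$, convexity of the subposet is a property of $H'$ as a subposet of $H(w)$ and does not depend on this choice.

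\medskip
\noindent\textbf{Convex subheaps come from subwords.}
For the converse, let $H'$ be a convex subheap of $H(w)$, say $H' = H(\w')$ for a subexpression $\w'$ of some reduced expression $\w = s_{x_1}\cdots s_{x_r}$ of $w$; write $I \subseteq \{1,\dots,r\}$ for the corresponding set of indices, so that $I$ is a convex subset of the poset $H(w)$. I want to produce a reduced expression $\v$ of $w$ in which the generators indexed by $I$ appear as a consecutive block. The idea is to use the heap factorization / Cartier--Foata theory: because $I$ is convex (an ``order-convex'' subset), the poset $H(w)$ decomposes as a concatenation of three pieces — the down-set of elements strictly below some element of $I$ but not in $I$ should be handled carefully; more precisely, let $D = \{p \in H(w) : p < q \text{ for some } q \in I, \ p \notin I\}$ be pushed to the bottom and $U$ the rest of the complement to the top. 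Convexity of $I$ guarantees no element of the complement lies ``between'' two elements of $I$ in a way that obstructs this; a linear extension of $H(w)$ that lists all of $D$ first, then all of $I$, then all of $U$ exists precisely because $I$ is convex. Reading off the generator labels along such a linear extension produces a reduced expression for $w$ (any linear extension of the heap of a reduced expression gives a reduced expression in the same commutation class, by \cite[Proposition 2.2]{Stembridge.J:B} and Matsumoto), and in it the letters indexed by $I$ form a subword. That subword is then the desired reduced-expression-subword realizing $H'$.

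\medskip
\noindent\textbf{Main obstacle.}
The forward direction is essentially bookkeeping about the direction of covering relations in the heap. The real content is the converse, and the crux is the claim that a convex subset $I$ of the heap poset admits a linear extension of the form $D \prec I \prec U$ as above, equivalently that one may always commute the letters of a reduced expression so that a convex subheap becomes a contiguous subword. I expect to argue this by a standard poset fact: if $I$ is convex in a finite poset $P$, then $P$ has a linear extension in which $I$ is an interval (take the down-set $\downarrow I$, note $\downarrow I \setminus I$ has no element above any element of $I$ by convexity, linearly extend $\downarrow I \setminus I$, then $I$, then the rest compatibly). Translating this back to reduced expressions via the heap-to-word dictionary — and checking that the resulting word is still reduced for $w$, which is automatic since it lies in the same commutation class — completes the proof. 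Since the paper flags this as ``implicit in the literature'' and ``follows easily from the definitions,'' I would keep the write-up short, citing the Cartier--Foata normal form and \cite[Proposition 2.2]{Stembridge.J:B} for the heap/word correspondence and giving the convex-interval poset argument in a sentence or two.
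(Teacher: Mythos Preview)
Your argument is correct: the forward direction is the observation that an integer interval of indices is automatically order-convex in the heap (since covers in the heap reverse the integer order on indices), and the converse is the standard poset fact that any convex subset of a finite poset occurs as an interval in some linear extension, translated back to reduced expressions via the heap/word correspondence.

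There is nothing to compare against in the paper: the statement is marked with a \qed and is explicitly described as ``implicit in the literature (in particular, see the proof of \cite[Proposition~3.3]{Stembridge.J:B}) and follows easily from the definitions.'' Your write-up simply spells out what the paper leaves implicit. If anything, you could trim the forward direction to a single sentence and, for the converse, state cleanly that $D := (\downarrow I)\setminus I$ and $D\cup I$ are both down-sets (this is exactly where convexity of $I$ is used), so any linear extension refining $D \prec I \prec U$ exists; reading off labels gives a reduced expression in the same commutation class with $I$ as a consecutive block.
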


It will be extremely useful for us to be able to recognize when a heap corresponds to an element in $\FC(\C_{n})$.  The next lemma is a special case of~\cite[Proposition 3.3]{Stembridge1996} and follows quickly after one considers the consecutive subwords that are forbidden in reduced expressions for elements in $\FC(\C_{n})$.

\begin{lemma}\label{lem:impermissible heap configs}
If $w \in \FC(\C_{n})$, then $H(w)$ cannot contain any of the convex subheaps of Figure~\ref{fig:impermissible heap configs}, where $1<k<n+1$ and we use a rectangle with a dotted boundary to emphasize that no element of the heap occupies the corresponding position.  \qed
\end{lemma}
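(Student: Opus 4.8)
The plan is to derive the lemma directly from the proposition immediately preceding it, together with Remark~\ref{rem:illegal convex chains}; the content is entirely a translation between words and heaps, and there is no genuinely difficult step. First I would identify each of the six configurations in the statement as the heap of one of the short reduced words singled out in Remark~\ref{rem:illegal convex chains}: a word $s_{k-1}s_{k}s_{k-1}$ or $s_{k}s_{k-1}s_{k}$ with $1<k<n+1$, or one of $s_{1}s_{2}s_{1}s_{2}$, $s_{2}s_{1}s_{2}s_{1}$, $s_{n}s_{n+1}s_{n}s_{n+1}$, $s_{n+1}s_{n}s_{n+1}s_{n}$. In each of these words consecutive letters are either equal or adjacent in the Coxeter graph of type $\C_{n}$, hence non-commuting, so the associated heap poset is a chain occupying exactly the two columns shown, with entries stacked in the displayed order (higher entries corresponding to letters occurring further left, per our convention). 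The $\emptyset$-markers encode precisely the emptiness conditions that make the depicted configuration equal to---not merely contain---the heap of the corresponding word, hence a convex subheap when it sits inside a larger heap.

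With that dictionary fixed, I would argue by contradiction. If $w\in\FC(\C_{n})$ and $H(w)$ contained one of the displayed configurations as a convex subheap $H'$, then by the preceding proposition $H'=H(\w')$ for some subword $\w'$---a consecutive subexpression---of some reduced expression for $w$. Since $H'$ is a chain, $\w'$ is forced to be the word obtained by reading $H'$ from top to bottom, i.e.\ exactly one of the forbidden words above. Hence $w$ would have a reduced expression containing one of the consecutive subwords prohibited in Remark~\ref{rem:illegal convex chains}, contradicting $w\in\FC(\C_{n})$, and the lemma follows. (The reverse implication---that a heap avoiding all six configurations is the heap of an FC element---comes from the same proposition read in the other direction, but is not needed here.)

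The only point that takes any care, and the closest thing to an obstacle, is the bookkeeping in the first paragraph: checking that each picture together with its $\emptyset$-markers really is the heap of the stated short word rather than something merely containing it, and that it is exactly convexity of $H'$ in $H(w)$ that makes the preceding proposition applicable, so that $\w'$ emerges as a genuine consecutive subword and not an arbitrary subexpression. Once this is pinned down the lemma is immediate, as the text anticipates; indeed it is the specialization of \cite[Proposition 3.3]{Stembridge.J:B} to the Coxeter graph of type $\C_{n}$, whose only braid relations of order at least $3$ are those listed in Remark~\ref{rem:illegal convex chains}.
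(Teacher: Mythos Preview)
Your proposal is correct and is precisely the argument the paper has in mind: the text states that the lemma ``follows immediately from Remark~\ref{rem:illegal convex chains} and is also a special case of \cite[Proposition~3.3]{Stembridge.J:B},'' and your write-up simply unpacks this by using the preceding proposition to convert a convex subheap into a subword and then invoking Remark~\ref{rem:illegal convex chains}. The only thing to double-check against the actual figures is the indexing of the two length-three configurations (whether the adjacent column is $k-1$ or $k+1$), but this is cosmetic and does not affect the argument.
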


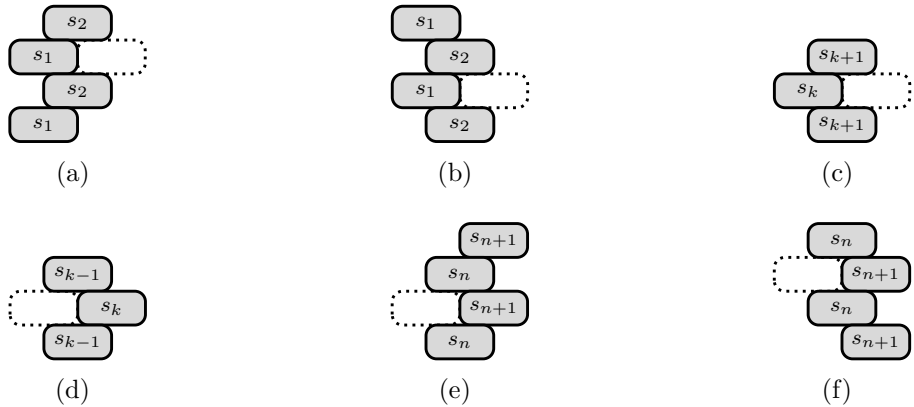
\begin{figure}[!ht]
\subcaptionbox{}[.3\linewidth]{
\begin{tikzpicture}[scale=0.45]
\heapblank{3}{3}
\heapblock{2}{4}{s_2};
\heapblock{1}{3}{s_1};
\heapblock{2}{2}{s_2};
\heapblock{1}{1}{s_1};
\end{tikzpicture}
}
\subcaptionbox{}[.3\linewidth]{
\begin{tikzpicture}[scale=0.45]
\heapblank{3}{2}
\heapblock{1}{4}{s_1};
\heapblock{2}{3}{s_2};
\heapblock{1}{2}{s_1};
\heapblock{2}{1}{s_2};
\end{tikzpicture}
}
\subcaptionbox{}[.3\linewidth]{
\begin{tikzpicture}[scale=0.45]
\heapblank{3}{2}
\heapblock{2}{3}{s_{k+1}};
\heapblock{1}{2}{s_k};
\heapblock{2}{1}{s_{k+1}};
\end{tikzpicture}
}\\
\vspace{1em}
\subcaptionbox{}[.3\linewidth]{
\begin{tikzpicture}[scale=0.45]
\heapblank{0}{2}
\heapblock{1}{3}{s_{k-1}};
\heapblock{2}{2}{s_k};
\heapblock{1}{1}{s_{k-1}};
\end{tikzpicture}
}
\subcaptionbox{}[.3\linewidth]{
\begin{tikzpicture}[scale=0.45]
\heapblank{0}{2}
\heapblock{2}{4}{s_{n+1}};
\heapblock{1}{3}{s_n};
\heapblock{2}{2}{s_{n+1}};
\heapblock{1}{1}{s_n};
\end{tikzpicture}
}
\subcaptionbox{}[.3\linewidth]{
\begin{tikzpicture}[scale=0.45]
\heapblank{0}{3}
\heapblock{1}{4}{s_n};
\heapblock{2}{3}{s_{n+1}};
\heapblock{1}{2}{s_n};
\heapblock{2}{1}{s_{n+1}};
\end{tikzpicture}
}
\caption{Impermissible convex subheaps for elements in $\FC(\C_n)$.}\label{fig:impermissible heap configs}
\end{figure}

\end{subsection}


\begin{subsection}{Hecke algebras}

Let $(W,S)$ be a Coxeter system of type $\Gamma$.  Loosely speaking, the associated Hecke algebra is an algebra with a basis indexed by the elements of $W$ and relations that deform the relations of $W$ by a parameter $q$.  More specifically, we define the \emph{Hecke algebra} of type $\Gamma$, denoted by $\H_{q}(\Gamma)$, to be the $\Z[q,q^{-1}]$-algebra with basis consisting of elements $T_{w}$, for all $w \in W(\Gamma)$, satisfying
\[
T_{s}T_{w}=\begin{cases}
T_{sw},  & \text{if } \ell(sw)>\ell(w)\\
qT_{sw}+(q-1)T_{w},  & \text{if } \ell(sw)<\ell(w),
\end{cases}
\]
where $s \in S(\Gamma)$ and $w \in W(\Gamma)$.  This is enough to compute $T_{x}T_{w}$ for arbitrary $x, w \in W(\Gamma)$.  Also, it follows from the definition that each $T_{w}$ is invertible.  It is convenient to extend the scalars of $\H_{q}(\Gamma)$ to produce an $\A$-algebra, $\H(\Gamma)=\A \otimes_{\Z[q,q^{-1}]} \H_{q}(\Gamma)$, where $\A=\Z[v,v^{-1}]$ and $v^{2}=q$.  The Laurent polynomial $v+v^{-1} \in \A$ occurs frequently and will be denoted by $\delta$.  

Since $W(\C_{n})$ is an infinite group, $\H(\C_{n})$ is an $\A$-algebra of infinite rank.  On the other hand, since $W(B_{n})$ and $W(B'_{n})$ are finite, both $\H(B_{n})$ and $\H(B'_{n})$ are of finite rank. For more on Hecke algebras, we refer the reader to~\cite[Chapter 7]{Humphreys1990}.

\end{subsection}


\begin{subsection}{Temperley--Lieb algebras}\label{subsec:TL-algebras}

Let $J(\Gamma)$ be the two-sided ideal of $\H(\Gamma)$ generated by the set $\{J_{s,t}\mid 3\leq m(s,t)<\infty\}$, where
\[
J_{s,t}=\sum_{w \in \langle s, t \rangle}T_{w}
\]
and $\langle s, t \rangle$ is the subgroup generated by $s$ and $t$.

Following Graham~\cite[Definition 6.1]{Graham1995}, we define the \emph{(generalized) Temperley--Lieb algebra}, $\TL(\Gamma)$, to be the quotient $\A$-algebra $\H(\Gamma)/J(\Gamma)$.  The image of $T_w$ under the canonical epimorphism $\H(\Gamma) \to \TL(\Gamma)$ is denoted $t_w$.  According to \cite[Theorem 6.2]{Graham1995}, the set $\{t_{w}\mid w \in \FC(\Gamma)\}$ is an $\A$-basis for $\TL(\Gamma)$.  

For our purposes, it will be more useful to work a different basis, which we define in terms of the $t$-basis. For each $s \in S(\Gamma)$, define $b_{s}=v^{-1}t_{s}+v^{-1}t_{e}$, where $e$ is the identity in $W(\Gamma)$.  If $s=s_{i}$, we will write $b_{i}$ in place of $b_{s_{i}}$.  If $w \in \FC(\Gamma)$ has reduced expression $\w=s_{x_{1}}\cdots s_{x_{r}}$, then we define 
\[
b_{\w}=b_{x_{1}}\cdots b_{x_{r}}.
\]

Note that if $\w$ and $\w'$ are two different reduced expressions for $w \in \FC(\Gamma)$, then $b_{\w}=b_{\w'}$ since $\w$ and $\w'$ are commutation equivalent and $b_{i}b_{j}=b_{j}b_{i}$ when $m(s_{i}, s_{j})=2$.  So we will write $b_{w}$ if we do not have a particular reduced expression in mind.  It is well known (and follows from~\cite[Proposition 2.4]{Green2006a}) that the set $\{b_{w}\mid w \in \FC(\Gamma)\}$ forms an $\A$-basis for $\TL(\Gamma)$.  This basis is referred to as the \emph{monomial basis}.  We let $b_{e}$ denote the identity of $\TL(\Gamma)$.

Recall that $W(\C_{n})$ contains an infinite number of FC elements, while $W(B_{n})$ and $W(B'_{n})$ contain finitely many.  Hence $\TL(\C_{n})$ is an $\A$-algebra of infinite rank while $\TL(B_{n})$ and $\TL(B'_{n})$ are of finite rank.  Note that we can have $\H(\Gamma)$ being of infinite rank while $\TL(\Gamma)$ is of finite rank.  In particular, $\TL(E_n)$ for $n\geq 9$ is finite dimensional while $\H(E_n)$ is of infinite rank.

It will be convenient for us to have a presentation for $\TL(\Gamma)$ in terms of generators and relations. The following proposition is a special case of~\cite[Proposition 2.6]{Green2006a}.  

\begin{proposition}\label{prop:affine C relations}
The algebra $\TL(\C_{n})$ is generated (as a unital algebra) by $b_{1}, b_{2}, \dots, b_{n+1}$ with defining relations
\begin{enumerate}[label=\rm{(\arabic*)}]
\item $b_{i}^{2}=\delta b_{i}$ for all $i$;
\item $b_{i}b_{j}=b_{j}b_{i}$ if $|i-j|>1$;
\item $b_{i}b_{j}b_{i}=b_{i}$ if $|i-j|=1$ and $1< i,j < n+1$;
\item $b_{i}b_{j}b_{i}b_{j}=2b_{i}b_{j}$ if $\{i,j\}=\{1,2\}$ or $\{n,n+1\}$.
\end{enumerate}
In addition, $\TL(B_{n})$ (respectively, $\TL(B'_{n})$) is generated (as a unital algebra) by $b_{1}, b_{2}, \dots, b_{n}$ (respectively, $b_{2}, b_{3}, \dots, b_{n+1}$) with the corresponding relations above.  \hfill $\qed$
\end{proposition}

It is known that we can consider $\TL(B_{n})$ and $\TL(B'_{n})$ as subalgebras of $\TL(\C_{n})$ in the obvious way. It will be useful for us to know what form an arbitrary product of monomial generators takes in $\TL(\C_{n})$.  The next lemma is similar to~\cite[Lemma 2.1.3]{Green2000}, which is a statement involving $W(B_{n})$.

\begin{lemma}\label{lem:powers of 2 and delta monomials}
If $w \in \FC(\C_{n})$ and $s \in S(\C_{n})$, then
\[
b_{s}b_{w}=2^{k}\delta^{m}b_{w'}
\]
for some $k, m \in \Z^{+}\cup \{0\}$ and $w' \in \FC(\C_{n})$.
\end{lemma}

\begin{proof}
We induct on the length of $w$.  For the base case, suppose $\ell(w)=0$.  Then for any $s \in S(\C_{n})$, we have $b_{s}b_{e}=b_{s}$, as desired.  Now, suppose $\ell(w)=p>1$.  There are three possibilities to consider.

First, if $sw$ is not reduced, then $s \in \L(w)$, and so we must be able to write $w=sv$ (reduced).  In this case, we see that $b_{s}b_{w}=b_{s}b_{s}b_{v}=\delta b_{s}b_{v}=\delta b_{w}$. Next, if $sw$ is reduced and FC, then $b_{s}b_{w}=b_{sw}$.

For the final case, suppose $sw$ is reduced but not FC.  Then $sw$ must have a reduced expression containing the subword $sts$ if $m(s,t)=3$ or $stst$ if $m(s,t)=4$.  So we must be able to write
\[
w=\begin{cases}
utsv, & \text{if } m(s,t)=3\\
utstv, & \text{if } m(s,t)=4,
\end{cases}
\]
where each product is reduced, $u, v \in \FC(\C_{n})$, and $s$ commutes with every element of $\supp(u)$, so that
\[
sw=\begin{cases}
ustsv, & \text{if } m(s,t)=3\\
uststv, & \text{if } m(s,t)=4.
\end{cases}
\]
This implies that
\[
b_{s}b_{w}=\begin{cases}
b_{u}b_{s}b_{t}b_{s}b_{v}=b_{u}b_{s}b_{v}, & \text{if } m(s,t)=3\\
b_{u}b_{s}b_{t}b_{s}b_{t}b_{v}=2b_{u}b_{s}b_{t}b_{v}, & \text{if } m(s,t)=4.
\end{cases}
\]
Note that $\ell(u)+1+\ell(v) < p$ when $m(s,t)=3$ and $\ell(u)+2+\ell(v) < p$ when $m(s,t)=4$.  So we can apply the inductive hypothesis $\ell(u)+1$ (respectively, $\ell(u)+2$) times if $m(s,t)=3$ (respectively, $m(s,t)=4$) starting with $b_{s}b_{v}$ (respectively, $b_{t}b_{v}$).  Therefore, we obtain $b_{s}b_{w}=2^{k}\delta^{m}b_{w'}$ for some $k, m \in \Z^{+}\cup \{0\}$ and $w' \in \FC(\C_{n})$.
\end{proof}

If $b_{x_{1}}, b_{x_{2}}, \dots, b_{x_{p}}$ is any collection of $p$ monomial generators, then it follows immediately from Lemma~\ref{lem:powers of 2 and delta monomials}  that $b_{x_{1}}b_{x_{2}}\cdots b_{x_{p}}=2^{k}\delta^{m} b_{w}$ for some $k, m \in \Z^{+}\cup \{0\}$ and $w \in \FC(\C_{n})$.

\end{subsection}

\end{section}


\begin{section}{Combinatorics in Coxeter groups of type affine $C$}\label{sec:combinatorics}

In this section, we explore some of the relevant combinatorics in Coxeter groups of type $\C_n$.


\begin{subsection}{type I and type II elements}

Define the following elements of $W(\C_{n})$.
\begin{enumerate}
\item If $i<j$, let
\[
\z_{i,j}=s_{i}s_{i+1}\cdots s_{j-1}s_{j}
\]
and
\[
\z_{j,i}=s_{j}s_{j-1}\cdots s_{i-1}s_{i}.
\]
We also let $\z_{i,i}=s_{i}$.

\item If $1< i \leq n+1$ and $1 < j \leq n+1$, let
\[
\z^{L,2k}_{i,j}=\z_{i,2}(\z_{1,n}\z_{n+1,2})^{k-1}\z_{1,n}\z_{n+1,j}.
\]
\item If $1< i \leq n+1$ and $1 \leq j < n+1$, let
\[
\z^{L,2k+1}_{i,j}=\z_{i,2}(\z_{1,n}\z_{n+1,2})^{k}\z_{1,j}.
\]

\item If $1\leq i < n+1$ and $1 \leq j <  n+1$, let
\[
\z^{R,2k}_{i,j}=\z_{i,n}(\z_{n+1,2}\z_{1,n})^{k-1}\z_{n+1,2}\z_{1,j}.
\]
	
\item If $1\leq i < n+1$ and $1 < j \leq  n+1$, let 
\[
\z^{R,2k+1}_{i,j}=\z_{i,n}(\z_{n+1,2}\z_{1,n})^{k}\z_{n+1,j}.
\]
\end{enumerate}
If $w \in W(\C_n)$ is equal to one of the elements in (1)--(5), then we say that $w$ is of \emph{type I}.

The notation for the type I elements looks more cumbersome than the underlying concept.  Our notation is motivated by the zigzagging shape of the corresponding heaps.  The index $i$ tells us where to start and the index $j$ tells us where to stop.  The L (respectively, R) tells us to start zigzagging to the left (respectively, right).  Also, $2k+1$ (respectively, $2k$) indicates the number of times we should encounter an end generator (i.e., $s_{1}$ or $s_{n+1}$) after the first occurrence of $s_{i}$ as we zigzag through the generators.  If $s_{i}$ is an end generator, it is not included in this count.  However, if $s_{j}$ is an end generator, it is included.  

Note that the expressions given for the type I elements are reduced. In fact, every type I element is rigid in the sense that each has a unique reduced expression. This implies that every type I element is FC and has a unique heap.  Moreover, there are an infinite number of type I elements since there is no limit to the amount of zigzagging. 

\begin{example}
If $1<i,j\leq n+1$, then $H(\z^{L,2k}_{i,j})$ is equal to the heap in Figure~\ref{fig:zigzag}, where we encounter an entry labeled by either $s_{1}$ or $s_{n+1}$ a combined total of $2k$ times if $i\neq n+1$ and $2k+1$ times if $i=n+1$.  
\end{example}

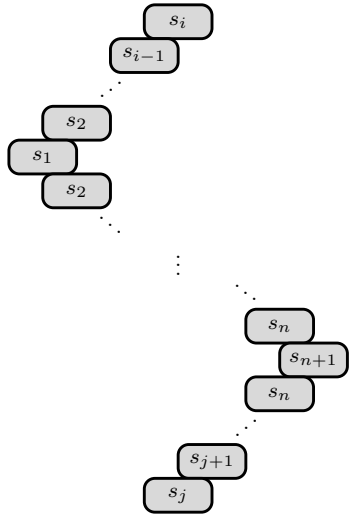
\begin{figure}[!ht]
\begin{tikzpicture}[scale=0.45]
\heapblock{5}{15}{s_i};
\heapblock{4}{14}{s_{i-1}};
\draw (3,13.2) node {\scriptsize $\iddots$};
\heapblock{2}{12}{s_2};
\heapblock{1}{11}{s_1};
\heapblock{2}{10}{s_2};
\draw (3,9.2) node {\scriptsize $\ddots$};
\draw (5,8) node {\scriptsize $\vdots$};
\draw (7,7.2) node {\scriptsize $\ddots$};
\heapblock{8}{6}{s_n};
\heapblock{9}{5}{s_{n+1}};
\heapblock{8}{4}{s_{n}};
\draw (7,3.2) node {\scriptsize $\iddots$};
\heapblock{6}{2}{s_{j+1}};
\heapblock{5}{1}{s_{j}};
\end{tikzpicture}
\caption{Example of the heap for a type I element.}\label{fig:zigzag} 
\end{figure}

For $w \in \FC(\C_{n})$, we define $n(w)$ to be the maximum integer $k$ such that $w = u x v$ (reduced), where $u, x, v \in \FC(\C_{n})$, $\ell(x)=k$, and $x$ is a product of commuting generators.  Note that $n(w)$ may be greater than the size of any row in the canonical representation of $H(w)$.  It is known that $n(w)$ is equal to the size of a maximal antichain in the heap poset for $w$~\cite[Lemma 2.9]{Shi2005b}.  

It is easily seen that if $w$ is of type I, then $w$ is FC with $n(w)=1$. Conversely, according to~\cite[Proposition 3.1.3]{Ernst2010}, if $n(w)=1$, then $w$ is of type I.

Next, define
\[
\x_{\O}=s_{1}s_{3}\cdots s_{2\lambda-1}s_{2\lambda+1},
\]
and
\[
\x_{\E}=s_{2}s_{4}\cdots s_{2\lambda-2}s_{2\lambda},
\]
where $\lambda=\lceil \frac{n-1}{2}\rceil$.  If $w \in W(\C_n)$ is equal to a finite alternating product of $\x_{\O}$ and $\x_{\E}$, then we say that $w$ is of \emph{type II}.  It is important to point out that the corresponding expressions are indeed reduced and that there are infinitely many type II elements, all of which are FC.  Moreover, if $w$ is of type II but not equal to $\x_{\E}$ when $n$ is even, then $n(w)=\lambda$~\cite[Proposition 3.2.3]{Ernst2010}. However, if $w\in\FC(\C_n)$ such that $n(w)=\lambda$, then $w$ may not be of type II. 

\end{subsection}


\begin{subsection}{Preparatory lemmas}\label{subsec:prep lemmas}  

We will use the following result~\cite[Lemma 5.2.1]{Ernst2010} to determine whether an element is of type I.

\begin{lemma}\label{lem:zigzag}
Let $w \in \FC(\C_{n})$. Suppose $w=uzv$ (reduced), where $z$ is equal to one of the following type I elements:  
\begin{enumerate}[label=\rm{(\arabic*)}]
\item $\z^{L,2}_{2,n}=s_{2}s_{1}s_{2}s_{3} \cdots s_{n-1}s_{n}s_{n+1}s_{n}$;
\item $\z^{R,2}_{n,2}=s_{n}s_{n+1}s_{n}s_{n-1}\cdots s_{3}s_{2}s_{1}s_{2}$;
\item $\z^{R,2}_{1,1}=s_{1}s_{2}\cdots s_{n}s_{n+1}s_{n}\cdots s_{2}s_{1}$;
\item $\z^{L,2}_{n+1,n+1}=s_{n+1}s_{n}\cdots s_{2}s_{1}s_{2}\cdots s_{n}s_{n+1}$.
\end{enumerate}
Then $w$ is of type I. \hfill $\qed$
\end{lemma}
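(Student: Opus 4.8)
The plan is to show that $w$ is of type I by verifying, via Proposition~\ref{prop:zigzags}, that $n(w) = 1$; equivalently, that the heap poset of $w$ has no antichain of size $2$. I will treat case (i) in detail and indicate how the other three cases follow by symmetry. So suppose $w$ has a reduced expression with $\z^{L,2}_{2,n} = s_2 s_1 s_2 s_3 \cdots s_{n-1} s_n s_{n+1} s_n$ as a subword. Write the full reduced expression as $\w = \a \cdot \z^{L,2}_{2,n} \cdot \b$ where $\a, \b$ are (possibly empty) expressions, and pass to the heap $H(\w)$. The embedded subword $\z^{L,2}_{2,n}$ contributes a convex subheap $Z$ that is itself a single zigzag chain: reading the letters, the entry for $s_2$ covers the entry for $s_1$, is covered by the next $s_2$, which is covered by $s_3$, $\dots$, up through $s_n$, $s_{n+1}$, and a final $s_n$. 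The key feature is that $Z$ contains entries in columns $1$ and $n+1$ — the two "end" columns — and that along this chain every generator index is used.

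The heart of the argument is to show that if $H(\w)$ contained an antichain $\{p, q\}$ of size $2$ (with $p$ in column $a$, $q$ in column $b$, $a < b$, and $|a-b| \ge 2$ so that $s_a, s_b$ commute), then attaching this antichain to the zigzag chain $Z$ forces one of the impermissible convex subheap configurations of Lemma~\ref{lem:impermissible heap configs} to appear, contradicting $w \in \FC(\C_n)$. Concretely: since the chain $Z$ "saturates" the strip between columns $1$ and $n+1$, any entry $p$ in column $a$ with $2 \le a \le n$ that is incomparable to some entry $q$ elsewhere will sit at a level straddling two consecutive entries of $Z$ in columns $a\pm 1$, and — because $p$ is incomparable to the $Z$-entry in its own column $a$, which cannot happen unless $p$ is one of the $Z$-entries or creates a forbidden $s_i s_j s_i$ or $s_i s_j s_i s_j$ pattern — we are pushed into a forbidden configuration. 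One runs the same analysis for $p$ in an end column. I would organize this by letting $p$ be the topmost entry of a hypothetical size-$2$ antichain and tracking its position relative to the (necessarily nearby) entries of $Z$, using the lattice-point description of covers in a straight-line heap (an entry in column $x$ covers one in column $x\pm1$ when nothing of columns $x, x\pm1$ lies strictly between) together with the canonical form; the incomparability of $p$ with the rest of the antichain, combined with full column coverage by $Z$, is what produces an illegal chain of type (i) or (ii) of Remark~\ref{rem:illegal convex chains}.

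For cases (ii)--(iv): case (ii) is the left-right mirror of case (i) (reverse the reduced expression, which reverses the heap), and cases (iii)--(iv) are handled identically since $\z^{R,2}_{1,1}$ and $\z^{L,2}_{n+1,n+1}$ likewise contribute a single zigzag chain spanning both end columns and using every generator index; in fact each of (ii)--(iv) contains a copy of the pattern analyzed in (i) (up to the symmetry of the Coxeter graph of type $\C_n$, which is invariant under $s_i \leftrightarrow s_{n+2-i}$), so the same forbidden-configuration argument applies verbatim. Having ruled out antichains of size $2$, we get $n(w) = 1$, and Proposition~\ref{prop:zigzags} gives that $w$ is of type I.

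The main obstacle I anticipate is the case analysis in the middle paragraph: carefully enumerating the possible vertical positions of a would-be antichain entry relative to the zigzag chain $Z$ and confirming that each position yields a genuine convex subheap from Lemma~\ref{lem:impermissible heap configs} (rather than merely a non-convex subexpression), especially near the end columns where the $m(s,t)=4$ relations make the forbidden patterns of length $4$ relevant. Making "the chain $Z$ saturates the strip" precise — so that an incomparable entry really is forced against $Z$ rather than floating freely in $\a$ or $\b$ — is the technical crux, and I would lean on the saturated/convex subheap machinery and the canonical heap representation developed in Section~\ref{subsec:heaps} to pin it down.
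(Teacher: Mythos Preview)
The paper does not give a self-contained argument: its entire proof is the citation ``This is \cite[Lemma 5.2.1]{Ernst.D:B}.'' So there is no in-paper proof to compare against, and your proposal is effectively an attempt to reconstruct what that reference presumably does. Your overall strategy---reduce to $n(w)=1$ via Proposition~\ref{prop:zigzags} by showing the heap is a chain---is the right one, and your symmetry reduction of (ii)--(iv) to (i) is fine, since in each case the subword contributes a convex chain hitting every column including both end columns.

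That said, the specific reasoning in your middle paragraph is garbled. You write that an antichain entry $p$ in column $a$ ``is incomparable to the $Z$-entry in its own column $a$,'' but this is never the case: entries sharing a column are \emph{always} comparable in the heap poset. So the contradiction you are reaching for is not the one you have written down, and as stated the paragraph does not establish anything. The real difficulty---which you correctly flag at the end---is that a hypothetical antichain $\{p,q\}$ could sit entirely inside $\a$ (or $\b$), with both $p$ and $q$ comparable to every element of $Z$, yet incomparable to each other. Nothing in your sketch rules this out.

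A cleaner organization that closes this gap: first prove that $H(w) = A \sqcup Z \sqcup B$ where $A = \{p : p > z_{\max}\}$ and $B = \{p : p < z_{\min}\}$, with $z_{\max}, z_{\min}$ the (unique) top and bottom of the chain $Z$. For this, take $p \notin Z$, note it is comparable to the $Z$-entry in its column, and follow a saturated chain from $p$ into $Z$; if that chain first enters $Z$ at some $z \neq z_{\max}, z_{\min}$, then $z$ acquires a cover (or is covered) from outside $Z$ in the column opposite to its $Z$-neighbor, and together with the adjacent $Z$-entries this produces a convex $s_{k\pm1}s_k s_{k\pm1}$ or, at the boundary bonds, a convex $s_1s_2s_1s_2$ or $s_ns_{n+1}s_ns_{n+1}$---forbidden by Lemma~\ref{lem:impermissible heap configs}. (This step is exactly where hitting \emph{both} end columns is used.) Second, show $A$ is itself a chain extending the zigzag: the entry of $A$ covering $z_{\max}$ is unique and forced by the same forbidden-pattern check, and one iterates. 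Likewise for $B$. Then $H(w)$ is a chain, $n(w)=1$, and Proposition~\ref{prop:zigzags} finishes.
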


The purpose of the next three lemmas (Lemmas~\ref{lem:all filled in}--\ref{lem:typeI or nothing above}) is to prove Lemma~\ref{lem:main zigzag lemma}, which plays a crucial role in Section~\ref{sec:main results}

\begin{lemma}\label{lem:all filled in}
Let $w \in \FC(\C_{n})$.  If the heap in Figure~\ref{fig:all filled in 1} is a saturated subheap of $H(w)$, where $i \neq n+1$, then the heap in Figure~\ref{fig:all filled in 2} is a convex subheap of $H(w)$, where every possible entry occurs in the region between the two occurrences of $s_1$ in Figure~\ref{fig:all filled in 1}.
\end{lemma}

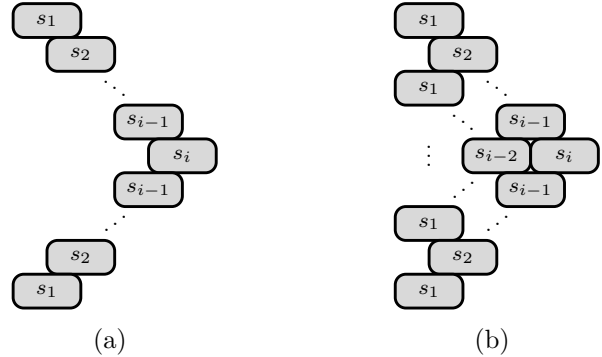
\begin{figure}[!ht]
\subcaptionbox{\label{fig:all filled in 1}}[.3\textwidth]{
\begin{tikzpicture}[scale=0.45]
\heapblock{1}{9}{s_1};
\heapblock{2}{8}{s_2};
\draw (3,7.2) node {\scriptsize $\ddots$};
\heapblock{4}{6}{s_{i-1}};
\heapblock{5}{5}{s_i};
\heapblock{4}{4}{s_{i-1}};
\draw (3,3.2) node {\scriptsize $\iddots$};
\heapblock{2}{2}{s_2};
\heapblock{1}{1}{s_1};
\end{tikzpicture}
}
\subcaptionbox{\label{fig:all filled in 2}}[.3\textwidth]{
\begin{tikzpicture}[scale=0.45]
\heapblock{1}{9}{s_1};
\heapblock{2}{8}{s_2};
\heapblock{1}{7}{s_1};
\draw (2,6.2) node {\scriptsize $\ddots$};
\draw (3,7.2) node {\scriptsize $\ddots$};
\draw (1,5.2) node {\scriptsize $\vdots$};
\heapblock{4}{6}{s_{i-1}};
\heapblock{3}{5}{s_{i-2}};
\heapblock{5}{5}{s_i};
\heapblock{4}{4}{s_{i-1}};
\draw (2,4.2) node {\scriptsize $\iddots$};
\draw (3,3.2) node {\scriptsize $\iddots$};
\heapblock{1}{3}{s_1};
\heapblock{2}{2}{s_2};
\heapblock{1}{1}{s_1};
\end{tikzpicture}
}
\caption{Subheaps corresponding to Lemma~\ref{lem:all filled in}.}\label{fig:all filled in}
\end{figure}

\begin{proof}
This follows quickly from Lemma~\ref{lem:impermissible heap configs}; all other configurations will violate $w$ being FC.
\end{proof}

As before, when representing convex subheaps of $H(w)$ for $w\in \FC(\C_n)$, we will use a rectangle with a dotted boundary to indicate the absence of an entry in the corresponding location in any representation of $H(w)$.  It is important to note that the occurrence of a such a rectangle implies that an entry from the canonical representation of $H(w)$ cannot be shifted vertically from above or below to occupy the location.

\begin{lemma}\label{lem:zigzag subword}
Let $w \in \FC(\C_{n})$.  If $1<i < n+1$ such that $H(w)$ has two consecutive occurrences of entries labeled by $s_{i}$ and there is no entry labeled by $s_{i+1}$ occurring between them, then the heap in Figure~\ref{fig:zigzag subword 1} is a convex subheap of $H(w)$.
\end{lemma}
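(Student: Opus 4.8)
The hypothesis gives two consecutive occurrences of $s_i$ in $H(w)$, say at levels $y_1 > y_2$, with no entry labeled $s_{i+1}$ strictly between them. Since $1 < i < n+1$, the generator $s_{i-1}$ commutes with neither $s_i$ nor (potentially) other neighbors, and $s_{i+1}$ is also a genuine neighbor. The first thing I would do is record what must appear between the two $s_i$'s: because $w$ is FC, Remark~\ref{rem:illegal convex chains}(i) forbids a subword $s_i s_{i-1} s_i$ and forbids $s_i s_{i+1} s_i$, so between the two occurrences of $s_i$ there must be at least one entry labeled $s_{i-1}$ \emph{and}, in the absence of an $s_{i+1}$, the only way to separate the two $s_i$'s in the heap order is via entries on the $s_{i-1}$ side. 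More precisely, the two $s_i$-entries are comparable in the heap (being consecutive occurrences of the same generator), so any saturated chain between them passes through columns $i-1$ or $i+1$; since column $i+1$ is empty in that range, every such chain goes through column $i-1$, forcing at least one $s_{i-1}$ strictly between, and — to avoid the convex subheap $s_{i-1}s_i s_{i-1}$ with nothing at column $i-2$ or column $i$ in between being illegal by Lemma~\ref{lem:impermissible heap configs} — actually forcing a descending ``zigzag'' pattern $s_i, s_{i-1}, s_{i-2}, \dots$ down the columns below $i$.

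**The inductive descent.** The heart of the argument is an induction moving leftward through the columns. Suppose we have established that between the two $s_i$-entries the heap contains a chain $s_i \gtrdot s_{i-1} \gtrdot \cdots \gtrdot s_{i-\ell}$ with nothing to the right of this chain in the relevant window. If $i - \ell > 1$, I claim the chain must continue: the entry $s_{i-\ell}$ sitting between two entries that force it to be covered and to cover along column $i-\ell\pm 1$ cannot be a ``turning point'' without creating one of the forbidden convex subheaps of Lemma~\ref{lem:impermissible heap configs} (the configurations with an $\emptyset$ in a neighboring column). Applying Lemma~\ref{lem:impermissible heap configs} at each step, the zigzag is forced to descend all the way to column $1$; at column $1$, the relation $m(s_1,s_2)=4$ (Remark~\ref{rem:illegal convex chains}(ii)) governs what may happen, but since we are building a single descending chain it simply terminates there or bounces, and symmetrically the chain on the way up from the lower $s_i$ must mirror it. Lemma~\ref{lem:all filled in} is then the tool that upgrades ``there is a saturated chain of this shape'' to ``the full convex closure, the shaded triangle, is present,'' which is presumably exactly what Figure~\ref{Fig050} depicts.

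**Assembling the conclusion.** Combining the forced descending zigzag from the upper $s_i$ down to an end generator, the symmetric ascending zigzag back up to the lower $s_i$, and Lemma~\ref{lem:all filled in} to fill in the convex closure between them, one reads off that the subheap of Figure~\ref{Fig050} is present; convexity follows because the region is, by construction, the convex closure of the two $s_i$-entries within the window, and no entry labeled $s_{i+1}$ intrudes by hypothesis. I would phrase the induction cleanly as: ``let $m$ be minimal such that column $m$ contains an entry lying (in the heap order) strictly between the two $s_i$-entries; show $m=1$ by ruling out $m>1$ via Lemma~\ref{lem:impermissible heap configs}, then apply Lemma~\ref{lem:all filled in}.''

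**Expected main obstacle.** The genuine difficulty is bookkeeping the vertical positions: ``consecutive occurrences of $s_i$'' controls column $i$, but a priori entries in far-left columns need not lie between the two $s_i$'s in the heap order, and one must be careful that the zigzag one constructs really does sit in the order-interval between the two $s_i$-entries rather than floating off to the side. Making the ``no $s_{i+1}$ between them'' hypothesis do its job — it is what prevents the interval from being bridged on the right and thereby forces everything onto the left — and correctly invoking Lemma~\ref{lem:all filled in} with the right saturated subheap as input, is where the care is needed; the forbidden-configuration applications themselves (Lemma~\ref{lem:impermissible heap configs}) are routine once the chain is set up.
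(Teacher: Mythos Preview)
Your outline has the right ingredients but assembles them differently from the paper, and in doing so leaves a real gap.

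The paper's argument is an induction on $i$. For $i=2$ the convex subheap is immediately $s_2 s_1 s_2$ (note this is \emph{not} forbidden, since $m(s_1,s_2)=4$; your opening paragraph mis-states this). For $i>2$, the absence of $s_{i+1}$ between the two consecutive $s_i$'s together with the forbidden pattern $s_i s_{i-1} s_i$ forces at least \emph{two} occurrences of $s_{i-1}$ between them. The crucial step is then to rule out three or more: if there were three, induction on $i-1$ applied to two adjacent pairs would give two overlapping copies of the smaller zigzag, and Lemma~\ref{lem:all filled in} applied to the resulting saturated subheap fills in a region containing $s_2 s_1 s_2 s_1$, contradicting $w\in\FC(\C_n)$. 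With exactly two $s_{i-1}$'s, induction on $i-1$ applied to that pair produces the desired V-shape $\z_{i,i}^{L,1}$.

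Your ``direct descent'' does not close. You assert that a saturated chain from the upper $s_i$ must continue monotonically leftward to column $1$ because a ``turning point'' would create a forbidden configuration, but this is not so: from an entry at column $j$ (with $1<j<i-1$) nothing in Lemma~\ref{lem:impermissible heap configs} prevents the chain from stepping right to a \emph{new} $s_{j+1}$ and bouncing around in columns $<i$. Controlling the number of $s_{i-1}$'s (and recursively the number of $s_{j}$'s for $j<i-1$) is exactly the content you are missing, and it is where the paper spends its effort. Relatedly, you have inverted the role of Lemma~\ref{lem:all filled in}: in the paper it is invoked to obtain a \emph{contradiction} (too many $s_{i-1}$'s forces $s_2 s_1 s_2 s_1$), not to ``fill in'' the target shape. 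Figure~\ref{Fig050} is the bare V-shaped heap of $\z_{i,i}^{L,1}$, not a filled triangle, so no filling-in is needed once the count is established. Your alternative ``minimal $m$'' formulation has the same defect: knowing that column $m$ is the leftmost occupied column in the order-interval does not by itself produce two consecutive $s_m$'s with nothing at column $m+1$ between them, so Lemma~\ref{lem:impermissible heap configs} gives no traction there.
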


\begin{proof}
We proceed by induction on $i$.  The base case involving $i=2$ is clear. Now, suppose $H(w)$ has two consecutive occurrences of entries labeled by $s_{i}$ and there is no entry labeled by $s_{i+1}$ occurring between them. Since there is no entry labeled by $s_{i+1}$ occurring between these entries and $w$ is FC, there must be at least two entries labeled by $s_{i-1}$ occurring between the consecutive occurrences of $s_{i}$.  For sake of a contradiction, suppose there are three or more entries in $H(w)$ labeled by $s_{i-1}$ occurring between the pair of entries labeled by $s_{i}$.  By induction, the heap in Figure~\ref{fig:zigzag subword 3} is a saturated subheap of $H(w)$.  But by Lemma~\ref{lem:all filled in}, the convex closure of the saturated subheap occurring between the top two occurrences of $s_{1}$ must be completely filled in.  This produces a convex chain that corresponds to the subword $s_{2}s_{1}s_{2}s_{1}$, which contradicts $w \in \FC(\C_{n})$.  Therefore, between the consecutive occurrences of entries labeled by $s_{i}$, there must be exactly two occurrences of an entry labeled by $s_{i-1}$.  Inductively, the heap in Figure~\ref{fig:zigzag subword 1} is a convex subheap of $H(w)$.
\end{proof}

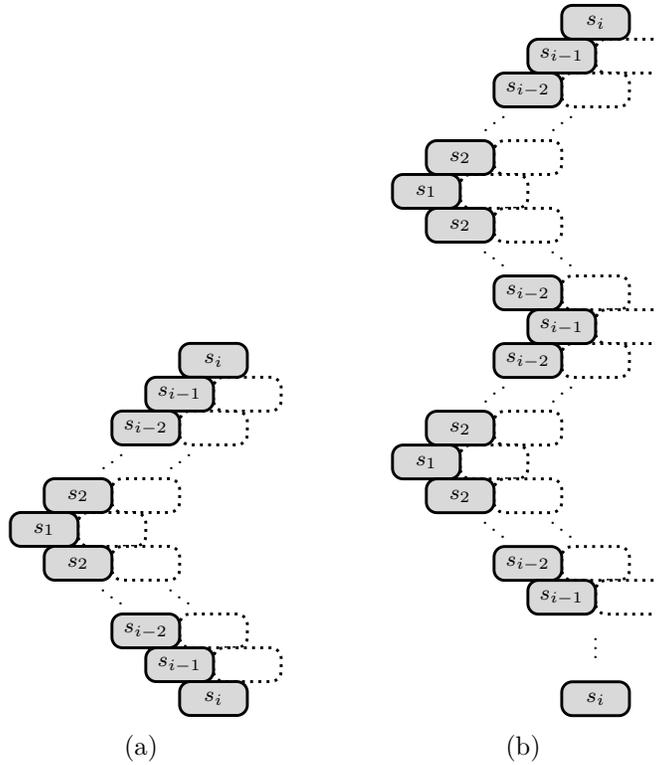
\begin{figure}[!ht]
\subcaptionbox{\label{fig:zigzag subword 1}}[.3\textwidth]{
\begin{tikzpicture}[scale=0.45]
\heapblank{6}{9}
\heapblank{4}{7}
\heapblank{3}{6}
\heapblank{4}{5}
\heapblank{6}{3}
\heapblank{7}{2}
\heapblank{7}{10}
\draw (5,8.2) node {\scriptsize $\iddots$};
\draw (5,4.2) node {\scriptsize $\ddots$};
\heapblock{6}{11}{s_{i}};
\heapblock{5}{10}{s_{i-1}};
\heapblock{4}{9}{s_{i-2}};
\draw (3,8.2) node {\scriptsize $\iddots$};
\heapblock{2}{7}{s_2};
\heapblock{1}{6}{s_1};
\heapblock{2}{5}{s_2};
\draw (3,4.2) node {\scriptsize $\ddots$};
\heapblock{4}{3}{s_{i-2}};
\heapblock{5}{2}{s_{i-1}};
\heapblock{6}{1}{s_{i}};
\end{tikzpicture}
}
\subcaptionbox{\label{fig:zigzag subword 3}}[.3\textwidth]{
\begin{tikzpicture}[scale=0.45]
\heapblank{7}{18}
\heapblank{6}{17}
\heapblank{4}{15}
\heapblank{3}{14}
\heapblank{4}{13}
\heapblank{6}{11}
\draw (5,16.2) node {\scriptsize $\iddots$};
\draw (5,12.2) node {\scriptsize $\ddots$};
\heapblank{7}{10}
\heapblank{6}{9}
\heapblank{4}{7}
\heapblank{3}{6}
\heapblank{4}{5}
\heapblank{6}{3}
\heapblank{7}{2}
\draw (5,8.2) node {\scriptsize $\iddots$};
\draw (5,4.2) node {\scriptsize $\ddots$};
\heapblock{6}{19}{s_{i}};
\heapblock{5}{18}{s_{i-1}};
\heapblock{4}{17}{s_{i-2}};
\draw (3,16.2) node {\scriptsize $\iddots$};
\heapblock{2}{15}{s_2};
\heapblock{1}{14}{s_1};
\heapblock{2}{13}{s_2};
\draw (3,12.2) node {\scriptsize $\ddots$};
\heapblock{4}{11}{s_{i-2}};
\heapblock{5}{10}{s_{i-1}};
\heapblock{4}{9}{s_{i-2}};
\draw (3,8.2) node {\scriptsize $\iddots$};
\heapblock{2}{7}{s_2};
\heapblock{1}{6}{s_1};
\heapblock{2}{5}{s_2};
\draw (3,4.2) node {\scriptsize $\ddots$};
\heapblock{4}{3}{s_{i-2}};
\heapblock{5}{2}{s_{i-1}};
\draw (6,0.7) node {\scriptsize $\vdots$};
\heapblock{6}{-1}{s_{i}};
\end{tikzpicture}
}
\caption{Subheaps corresponding to Lemma~\ref{lem:zigzag subword}.}
\end{figure}

\begin{lemma}\label{lem:typeI or nothing above}
Let $w \in \FC(\C_{n})$.  If $s_{2}s_{1}s_{2}$ is a subword of some reduced expression for $w$ and $i$ is the largest index such that the heap in Figure~\ref{fig:typeI or nothing above 1} is a saturated subheap of $H(w)$, then one or both of the following must be true:
\begin{enumerate}[label=\rm{(\arabic*)}]
\item $w$ is of type I; 
\item The subheap in Figure~\ref{fig:typeI or nothing above 2} is the northwest corner of $H(w)$, where the entry labeled by $s_{i}$ in Figure~\ref{fig:typeI or nothing above 2} is not covered.
\end{enumerate}
\end{lemma}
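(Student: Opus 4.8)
The plan is to analyze the structure of $H(w)$ above and to the left of the fixed subword $s_2 s_1 s_2$, using the maximality of $i$ to control what can and cannot appear. Since $s_2 s_1 s_2$ is a subword of some reduced expression for $w$, Lemma~\ref{lem:zigzag subword} (or rather its proof mechanism) combined with the definition of $i$ tells us that $H(w)$ contains the ``zigzag staircase'' of Figure~\ref{Fig054} as a saturated subheap, climbing from the $s_2 s_1 s_2$ configuration up through $s_3, s_4, \dots, s_i$, with the entry labeled $s_i$ at the top. First I would invoke Lemma~\ref{lem:all filled in} to conclude that the convex closure of this saturated subheap is completely filled in, so in fact the solid triangular heap (the analogue of Figure~\ref{Fig047}) sits inside $H(w)$ as a convex subheap. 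This is the backbone around which the rest of the argument is organized.

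Next I would examine what lies strictly above this triangle, i.e., what covers its top entries. By maximality of $i$, there is no saturated chain extending the staircase one step further to $s_{i+1}$ in the required configuration; I need to convert this into a statement about covers. There are two cases. In the first case, nothing covers the entry labeled $s_i$ and nothing covers any entry of the triangle in a way that would force new generators — then the triangle is literally the northwest corner of $H(w)$, the entry labeled $s_i$ is uncovered, and we are in conclusion~(ii). In the second case, something does cover part of the triangle; the only candidates (by the straight-line shape of the Coxeter graph of $\C_n$ and the impermissible configurations of Lemma~\ref{lem:impermissible heap configs}) are an entry labeled $s_{i+1}$ above the $s_i$ entry, or additional entries labeled $s_1$ or $s_2$ above the lower part of the triangle. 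I would rule out the latter immediately: an extra $s_1$ above the top $s_1$ of the triangle would, together with the filled-in triangle, create the convex subword $s_2 s_1 s_2 s_1$ (or force an $s_1 s_2 s_1$ pattern lower down), contradicting $w \in \FC(\C_n)$ via Remark~\ref{rem:illegal convex chains}; similarly for an extra $s_2$. So the only way to be outside conclusion~(ii) is to have an entry labeled $s_{i+1}$ covering the top $s_i$.

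Now I would push this observation to its conclusion. Once an $s_{i+1}$ sits above $s_i$, the filled-in triangle below forces (again by Lemma~\ref{lem:all filled in} applied to the enlarged saturated configuration) a full staircase running $s_2 s_1 s_2 \to s_3 \to \cdots \to s_i \to s_{i+1}$; but by maximality of $i$, the index $i+1$ cannot be the top of the staircase configuration of Figure~\ref{Fig054} — which, given that $i$ was the \emph{largest} such index, can only happen if $i+1 = n+1$, i.e., $i = n$ and $s_{i+1} = s_{n+1}$ is the right-hand end generator. At that point $H(w)$ contains the convex subword $s_2 s_1 s_2 s_3 \cdots s_{n-1} s_n s_{n+1} s_n = \z^{L,2}_{2,n}$ (possibly after noting the forced $s_n$ above via the same fill-in argument at the $s_{n+1}$ end), and Lemma~\ref{lem:zigzag} part~(i) immediately gives that $w$ is of type~I, which is conclusion~(i). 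I expect the main obstacle to be the careful bookkeeping in this last step: precisely verifying that ``an $s_{i+1}$ covers the top $s_i$'' together with maximality of $i$ forces the end generator and the $\z^{L,2}_{2,n}$ subword, rather than some other escape; this requires a disciplined case analysis on which entries of the triangle are covered and repeated appeals to Lemmas~\ref{lem:impermissible heap configs} and~\ref{lem:all filled in} to close off all alternatives.
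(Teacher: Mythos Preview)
Your overall strategy---pin down what can cover the top entry $s_i$ of the staircase and show that any such cover forces type~I---is the right one, and it is also what the paper does. But the execution goes wrong at the crucial step.

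The main gap is in your maximality argument. You argue that if an entry labeled $s_{i+1}$ covers $s_i$, then by maximality of $i$ this can only happen when $i+1 = n+1$, i.e., $i = n$. This is incorrect: the staircase of Figure~\ref{Fig054} is allowed to have $i = n+1$ at the top (the paper's proof explicitly treats this case), so if $s_{i+1}$ covered $s_i$ for any $i \le n$ we could simply extend the staircase and take $i+1$ as the new top, contradicting maximality outright. There is no exception for $i+1 = n+1$. In other words, maximality already guarantees that $s_{i+1}$ does \emph{not} cover $s_i$ whenever $i < n+1$; the scenario you analyse never occurs.

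The actual escape from conclusion~(ii) comes from the \emph{other} side: when $i = n+1$ there is no $s_{i+1}$, but $s_{n+1}$ can be covered by an entry labeled $s_n$, because $m(s_n, s_{n+1}) = 4$ and the resulting $s_n s_{n+1} s_n$ chain is not forbidden by Lemma~\ref{lem:impermissible heap configs}. In that case the subword one obtains is $s_n s_{n+1} s_n s_{n-1} \cdots s_2 s_1 s_2 = \z^{R,2}_{n,2}$ (not $\z^{L,2}_{2,n}$), and Lemma~\ref{lem:zigzag} then gives type~I. Your parenthetical about a ``forced $s_n$ above'' has no justification and is aimed at the wrong configuration.

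A secondary issue: your claim that the only candidates for covers are $s_{i+1}$ above $s_i$ or ``additional entries labeled $s_1$ or $s_2$'' is incomplete. Each $s_j$ on the diagonal for $2 \le j \le i$ could in principle be covered by an entry labeled $s_{j-1}$; you must rule this out for every $j$, not just $j = 2$. The paper does this cleanly by iterating: a cover of the diagonal entry $s_j$ by $s_{j-1}$ would produce the convex subword $s_{j-1} s_j s_{j-1}$ (impermissible for $3 \le j \le n$ since $m(s_{j-1}, s_j) = 3$) or, for $j = 2$, the subword $s_1 s_2 s_1 s_2$. This direct walk up the diagonal also makes your detour through Lemma~\ref{lem:all filled in} unnecessary---and that lemma does not even apply when $i = n+1$.
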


\begin{figure}[!ht]
\subcaptionbox{\label{fig:typeI or nothing above 1}}[.3\textwidth]{
\begin{tikzpicture}[scale=0.45]
\heapblank{3}{2}
\heapblock{6}{7}{s_{i}};
\heapblock{5}{6}{s_{i-1}};
\draw (4,5.2) node {\scriptsize $\iddots$};
\heapblock{3}{4}{s_3};
\heapblock{2}{3}{s_2};
\heapblock{1}{2}{s_1};
\heapblock{2}{1}{s_2};
\end{tikzpicture}
}
\subcaptionbox{\label{fig:typeI or nothing above 2}}[.3\textwidth]{
\begin{tikzpicture}[scale=0.45]
\heapblank{4}{7}
\heapblank{3}{6}
\heapblank{1}{4}
\draw (2,5.2) node {\scriptsize $\iddots$};
\heapblank{3}{2}
\heapblock{6}{7}{s_{i}};
\heapblock{5}{6}{s_{i-1}};
\draw (4,5.2) node {\scriptsize $\iddots$};
\heapblock{3}{4}{s_3};
\heapblock{2}{3}{s_2};
\heapblock{1}{2}{s_1};
\heapblock{2}{1}{s_2};
\end{tikzpicture}
}
\caption{Subheaps corresponding to Lemma~\ref{lem:typeI or nothing above}.}
\end{figure}
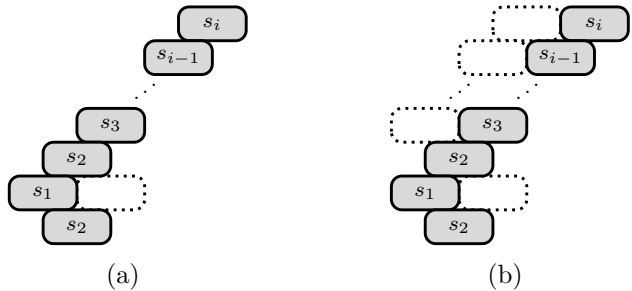

\begin{proof}
The higher entry labeled by $s_{2}$ cannot be covered by an entry labeled by $s_{1}$; otherwise, we produce one of the impermissible configurations of Lemma~\ref{lem:impermissible heap configs} and violate $w$ being FC.  Iterating, for $2\leq j\leq i-1$, each entry on the diagonal of the subheap labeled by $s_{j}$, can only be covered by an entry labeled by $s_{j+1}$.  If $i<n+1$, we are done since the entry labeled by $s_{i}$ cannot be covered by an entry labeled by $s_{i-1}$.  On the other hand, if $i=n+1$ and the entry labeled by $s_{n+1}$ at the top of the diagonal in the subheap is covered by an entry labeled by $s_{n}$, then by Lemma~\ref{lem:zigzag}, $w$ is of type I.
\end{proof}

The previous lemma has versions corresponding to the southwest, northeast, and southeast corners of $H(w)$.  As stated earlier, the purpose of the previous three lemmas was to aid in the proof of the next important lemma.  

\begin{lemma}\label{lem:main zigzag lemma}
Let $w \in \FC(\C_{n})$.  If $1<i < n+1$ such that $H(w)$ has two consecutive occurrences of entries labeled by $s_{i}$ and there is no entry labeled by $s_{i+1}$ occurring between them, then either $w$ is of type I or the heap in Figure~\ref{fig:main zigzag lemma 1} is a convex subheap of $H(w)$ and there are no other occurrences of entries labeled by $s_{1}, s_{2}, \dots, s_{i}$ in $H(w)$.
\end{lemma}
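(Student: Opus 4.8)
The plan is to take Lemma~\ref{lem:zigzag subword} as the starting point: under the stated hypothesis it already exhibits the zigzag of Figure~\ref{Fig056} (this is the heap of Figure~\ref{Fig050}, possibly with the $\emptyset$-annotations added) as a convex subheap of $H(w)$, and in particular it shows that $s_2s_1s_2$ occurs as a subword of some reduced expression for $w$. So the genuine content left to prove is the dichotomy, and I would assume from now on that $w$ is \emph{not} of type I and show that no entry of $H(w)$ labeled by any of $s_1,s_2,\dots,s_i$ lies outside this zigzag; together with convexity from Lemma~\ref{lem:zigzag subword} and the hypothesis about $s_{i+1}$, this is exactly the assertion that the ($\emptyset$-annotated) heap of Figure~\ref{Fig056} is a convex subheap of $H(w)$.

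The first step is to pin down the two ``arms'' of the zigzag. Recall that the zigzag is a chain in the heap poset passing through the columns $i,i-1,\dots,2,1,2,\dots,i$; call its maximum and its minimum (both labeled $s_i$) the top and bottom entries, and note that it contains the subword $s_2s_1s_2$ near the bottom (the two occurrences of $s_2$ and the single $s_1$). I would apply Lemma~\ref{lem:typeI or nothing above} to the portion of the zigzag running from that $s_2s_1s_2$ up to the top $s_i$, and the southwest-corner version of that lemma (mentioned in the Remark following it) to the portion running down from $s_2s_1s_2$ to the bottom $s_i$. Since $w$ is not of type I, each application forces the corresponding staircase to occupy a corner of $H(w)$ having the shape of Figure~\ref{Fig055} (respectively its southwest reflection): the top $s_i$ is not covered, the bottom $s_i$ covers nothing, each intermediate $s_j$ is covered only by the next step $s_{j+1}$ of the staircase, and no entry of $H(w)$ lies above or to the northwest of the upper staircase, nor below or to the southwest of the lower one. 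The one bookkeeping point here is that the maximal index produced by Lemma~\ref{lem:typeI or nothing above} is $\ge i$ but could be strictly larger; this is harmless, since the staircase it describes still controls all of columns $1,\dots,i$, and an $s_{i+1}$ continuing the staircase above the top $s_i$ lies \emph{before} that $s_i$ in the word, hence is not one of the entries between the two consecutive occurrences of $s_i$ and does not contradict the hypothesis (and the subcase in which the staircase reaches $s_{n+1}$ and continues is already absorbed into the ``type I'' alternative of Lemma~\ref{lem:typeI or nothing above} via Lemma~\ref{lem:zigzag}).

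The second step is a short poset argument. Suppose $e$ is an entry of $H(w)$ labeled by some $s_j$ with $1\le j\le i$ that does not belong to the zigzag. Since $e$ and the one or two zigzag entries in column $j$ all lie in the same column, they are pairwise comparable, so $e$ is either above the column-$j$ entry of the upper arm, below the column-$j$ entry of the lower arm, or strictly between the two column-$j$ zigzag entries. In the first case $e$ would sit above or to the northwest of a staircase entry of the upper arm (tracing through the covering relations: anything above a staircase entry $s_j$ is reached only via $s_{j+1}$, and iterating lands one to the northwest of some staircase entry), contradicting the corner description; the second case is excluded symmetrically; and in the third case convexity of the zigzag forces $e$ to be one of the zigzag's own entries, since the zigzag chain has no column-$j$ entry strictly between its two column-$j$ entries. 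Hence no such $e$ exists. (As an alternative to the corner argument for the ``above/below an arm'' cases, one can instead feed the putative $e$ into Lemma~\ref{lem:all filled in}, fill in the convex closure of the resulting saturated region, and extract a forbidden $s_{j-1}s_js_{j-1}$ or $s_js_{j+1}s_js_{j+1}$ chain, contradicting $w\in\FC(\C_n)$ by Lemma~\ref{lem:impermissible heap configs} — unless instead a $\z^{L,2}$-type subword appears, in which case $w$ is type I by Lemma~\ref{lem:zigzag}.)

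The step I expect to be the main obstacle is the first one: correctly invoking Lemma~\ref{lem:typeI or nothing above} and its southwest variant for the two \emph{unequal} arms of the asymmetric zigzag (one $s_i$ is the maximum of the chain, the other the minimum), and carrying out the index bookkeeping carefully enough that the two corner descriptions genuinely rule out every lattice position a stray entry labeled $s_1,\dots,s_i$ could occupy. Once both staircases are known to sit in corners of $H(w)$, the rest is routine comparability and convexity reasoning.
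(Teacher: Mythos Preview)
Your proposal is correct and follows essentially the same route as the paper: apply Lemma~\ref{lem:zigzag subword} to produce the zigzag, then invoke the northwest and southwest versions of Lemma~\ref{lem:typeI or nothing above} to force the two arms of the zigzag to sit in corners of $H(w)$ (or else conclude $w$ is type~I). The only differences are presentational: the paper begins by replacing $i$ with the largest index satisfying the hypothesis and then explicitly names the extension indices $k,l\ge i$ to which the upper and lower staircases continue (your ``bookkeeping point''), whereas you leave $i$ fixed and absorb the extension into the invocation of Lemma~\ref{lem:typeI or nothing above}; and the paper reads off the ``no other $s_1,\dots,s_i$'' conclusion directly from the resulting picture, while you spell out the comparability/convexity argument. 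Neither difference is substantive.
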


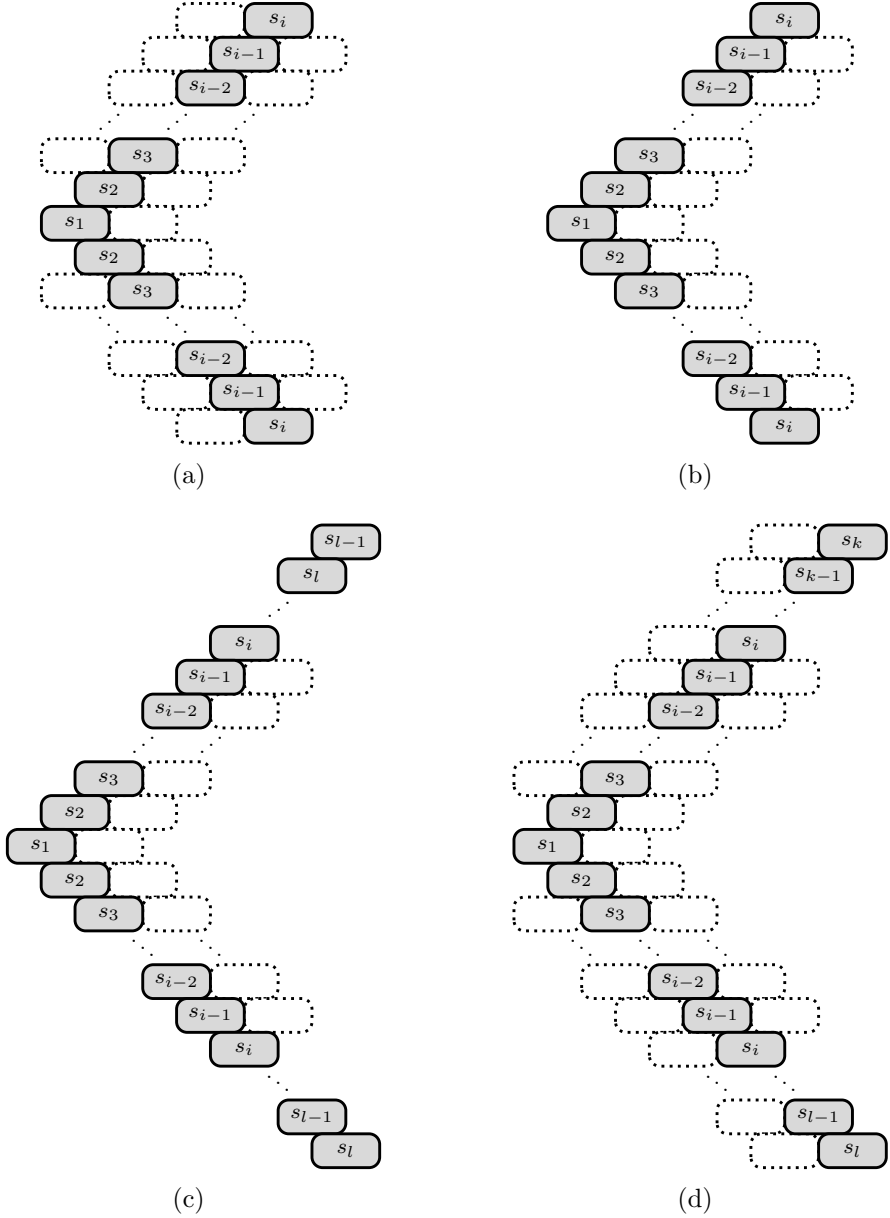
\begin{figure}[!ht]
\subcaptionbox{\label{fig:main zigzag lemma 1}}[.4\textwidth]{
\begin{tikzpicture}[scale=0.45]
\heapblank{5}{13}
\heapblank{4}{12}
\heapblank{3}{11}
\draw (2,10.2) node {\scriptsize $\iddots$};
\heapblank{1}{9}
\heapblank{1}{5}
\draw (2,4.2) node {\scriptsize $\ddots$};
\heapblank{3}{3}
\heapblank{4}{2}
\heapblank{5}{1}
\heapblank{8}{12}
\heapblank{7}{11}
\draw (6,10.2) node {\scriptsize $\iddots$};
\heapblank{5}{9}
\heapblank{4}{8}
\heapblank{3}{7}
\heapblank{4}{6}
\heapblank{5}{5}
\draw (6,4.2) node {\scriptsize $\ddots$};
\heapblank{7}{3}
\heapblank{8}{2}
\heapblock{7}{13}{s_{i}};
\heapblock{6}{12}{s_{i-1}};
\heapblock{5}{11}{s_{i-2}};
\draw (4,10.2) node {\scriptsize $\iddots$};
\heapblock{3}{9}{s_3};
\heapblock{2}{8}{s_2};
\heapblock{1}{7}{s_1};
\heapblock{2}{6}{s_2};
\heapblock{3}{5}{s_3};
\draw (4,4.2) node {\scriptsize $\ddots$};
\heapblock{5}{3}{s_{i-2}};
\heapblock{6}{2}{s_{i-1}};
\heapblock{7}{1}{s_{i}};
\end{tikzpicture}
}
\subcaptionbox{\label{fig:main zigzag lemma 2}}[.4\textwidth]{
\begin{tikzpicture}[scale=0.45]
\heapblank{8}{12}
\heapblank{7}{11}
\draw (6,10.2) node {\scriptsize $\iddots$};
\heapblank{5}{9}
\heapblank{4}{8}
\heapblank{3}{7}
\heapblank{4}{6}
\heapblank{5}{5}
\draw (6,4.2) node {\scriptsize $\ddots$};
\heapblank{7}{3}
\heapblank{8}{2}
\heapblock{7}{13}{s_{i}};
\heapblock{6}{12}{s_{i-1}};
\heapblock{5}{11}{s_{i-2}};
\draw (4,10.2) node {\scriptsize $\iddots$};
\heapblock{3}{9}{s_3};
\heapblock{2}{8}{s_2};
\heapblock{1}{7}{s_1};
\heapblock{2}{6}{s_2};
\heapblock{3}{5}{s_3};
\draw (4,4.2) node {\scriptsize $\ddots$};
\heapblock{5}{3}{s_{i-2}};
\heapblock{6}{2}{s_{i-1}};
\heapblock{7}{1}{s_{i}};
\end{tikzpicture}
}\\
\vspace{1em}
\subcaptionbox{\label{fig:main zigzag lemma 3}}[.4\textwidth]{
\begin{tikzpicture}[scale=0.45]
\heapblank{8}{12}
\heapblank{7}{11}
\draw (6,10.2) node {\scriptsize $\iddots$};
\heapblank{5}{9}
\heapblank{4}{8}
\heapblank{3}{7}
\heapblank{4}{6}
\heapblank{5}{5}
\draw (6,4.2) node {\scriptsize $\ddots$};
\heapblank{7}{3}
\heapblank{8}{2}
\heapblock{10}{16}{s_{l-1}};
\heapblock{9}{15}{s_{l}};
\draw (8,14.2) node {\scriptsize $\iddots$};
\heapblock{7}{13}{s_{i}};
\heapblock{6}{12}{s_{i-1}};
\heapblock{5}{11}{s_{i-2}};
\draw (4,10.2) node {\scriptsize $\iddots$};
\heapblock{3}{9}{s_3};
\heapblock{2}{8}{s_2};
\heapblock{1}{7}{s_1};
\heapblock{2}{6}{s_2};
\heapblock{3}{5}{s_3};
\draw (4,4.2) node {\scriptsize $\ddots$};
\heapblock{5}{3}{s_{i-2}};
\heapblock{6}{2}{s_{i-1}};
\heapblock{7}{1}{s_{i}};
\draw (8,0.2) node {\scriptsize $\ddots$};
\heapblock{9}{-1}{s_{l-1}};
\heapblock{10}{-2}{s_{l}};
\end{tikzpicture}
}
\subcaptionbox{\label{fig:main zigzag lemma 4}}[.4\textwidth]{
\begin{tikzpicture}[scale=0.45]
\heapblank{8}{16}
\heapblank{7}{15}
\draw (6,14.2) node {\scriptsize $\iddots$};
\heapblank{5}{13}
\heapblank{4}{12}
\heapblank{3}{11}
\draw (2,10.2) node {\scriptsize $\iddots$};
\heapblank{1}{9}
\heapblank{1}{5}
\draw (2,4.2) node {\scriptsize $\ddots$};
\heapblank{3}{3}
\heapblank{4}{2}
\heapblank{5}{1}
\heapblank{8}{12}
\heapblank{7}{11}
\draw (6,10.2) node {\scriptsize $\iddots$};
\heapblank{5}{9}
\heapblank{4}{8}
\heapblank{3}{7}
\heapblank{4}{6}
\heapblank{5}{5}
\draw (6,4.2) node {\scriptsize $\ddots$};
\heapblank{7}{3}
\heapblank{8}{2}
\draw (6,0.2) node {\scriptsize $\ddots$};
\heapblank{7}{-1}
\heapblank{8}{-2}
\heapblock{10}{16}{s_{k}};
\heapblock{9}{15}{s_{k-1}};
\draw (8,14.2) node {\scriptsize $\iddots$};
\heapblock{7}{13}{s_{i}};
\heapblock{6}{12}{s_{i-1}};
\heapblock{5}{11}{s_{i-2}};
\draw (4,10.2) node {\scriptsize $\iddots$};
\heapblock{3}{9}{s_3};
\heapblock{2}{8}{s_2};
\heapblock{1}{7}{s_1};
\heapblock{2}{6}{s_2};
\heapblock{3}{5}{s_3};
\draw (4,4.2) node {\scriptsize $\ddots$};
\heapblock{5}{3}{s_{i-2}};
\heapblock{6}{2}{s_{i-1}};
\heapblock{7}{1}{s_{i}};
\draw (8,0.2) node {\scriptsize $\ddots$};
\heapblock{9}{-1}{s_{l-1}};
\heapblock{10}{-2}{s_{l}};
\end{tikzpicture}
}
\caption{Subheaps corresponding to Lemma~\ref{lem:main zigzag lemma}.}
\end{figure}

\begin{proof}
Choose the largest index $i$ with $1<i<n+1$ such that $s_{i+1}$ does not occur between two consecutive occurrences of $s_{i}$ in $w$.  By Lemma~\ref{lem:zigzag subword}, the heap in Figure~\ref{fig:main zigzag lemma 2} is a convex subheap of $H(w)$.  Let $k$ be largest index with $i\leq k \leq n+1$ such that each entry labeled by $s_{j}$ on the upper diagonal in the subheap of Figure~\ref{fig:main zigzag lemma 1} covers an entry labeled by $s_{j-1}$ for $j\leq k$.  Similarly, let $l$ be the largest index with $i\leq l \leq n+1$ such that each entry on the lower diagonal labeled by $s_{j}$ is covered by an entry labeled by $s_{j-1}$ for $j\leq l$.  Then the heap in Figure~\ref{fig:main zigzag lemma 3} is a convex subheap of $H(w)$.  By the northwest and southwest versions of Lemma~\ref{lem:typeI or nothing above}, the heap in Figure~\ref{fig:main zigzag lemma 4} must be a convex subheap of $H(w)$.  If neither of $k$ nor $l$ are equal to $n+1$, then we are done since the entry labeled by $s_{k}$ (respectively, $s_{l}$) cannot be covered by (respectively, cover) an entry labeled by $s_{k-1}$ (respectively, $s_{l-1}$); otherwise, we contradict $w \in \FC(\C_{n})$. If $k=n+1$ and the entry labeled by $s_{n+1}$ is covered, it must be covered by an entry labeled by $s_{n}$. But then $w$ would then be of type I by Lemma~\ref{lem:zigzag}. The case involving $l=n+1$ follows by a similar argument.
\end{proof}

Note that all of the previous lemmas of this section have right-handed versions where $s_{1}$ and $s_{2}$ are replaced with $s_{n+1}$ and $s_{n}$, respectively.

\end{subsection}


\begin{subsection}{Weak star reductions and non-cancellable elements}\label{subsec:weak star}

The notion of a star operation was originally defined by Kazhdan and Lusztig in~\cite[\textsection 4.1]{Kazhdan1979} for simply-laced Coxeter systems (i.e., $m(s,t)\leq3$ for all $s,t \in S$) and was later generalized to arbitrary Coxeter systems in~\cite[\textsection 10.2]{Lusztig1985}.  If $I=\{s,t\}$ is a pair of noncommuting generators for $W$ of type $\Gamma$, then $I$ induces four partially defined maps from $W$ to itself, known as star operations. A star operation, when it is defined, respects the partition $W = \FC(\Gamma)\ \dot{\cup}\  (W \setminus \FC(\Gamma) )$ of the Coxeter group, and increases or decreases the length of the element to which it is applied by 1.  For our purposes, it is enough to define a weaker notion of a star operation that decreases length by 1, and so we will not develop the full generality.  

We now introduce the concept of weak star reducible, which is related to Fan's notion of cancellable in~\cite{Fan1997}.   Suppose $(W,S)$ is a Coxeter system of type $\Gamma$.  If $w \in \FC(\Gamma)$, then $w$ is \emph{left weak star reducible by $s$ with respect to $t$} to $sw$ if (i) $s\in \L(w)$, (ii) $t \in \L(sw)$ with $m(s,t) \geq 3$, and (iii) $tw \notin \FC(\Gamma)$.  Observe that Condition~(i) implies that $sw$ has length strictly smaller than $w$ while Condition~(iii) implies that $tw$ has length strictly larger than $w$.  We analogously define \emph{right weak star reducible by $s$ with respect to $t$}.  If $w$ is left (respectively, right) weak star reducible by $s$ with respect to $t$, then we refer to $w\mapsto sw$ (respectively, $w\mapsto ws$) as a \emph{left} (respectively, \emph{right}) \emph{weak star reduction}. If $w$ is either left or right weak star reducible by some $s\in S$, we say that $w$ is \emph{weak star reducible}.  Otherwise, we say that $w\in \FC(\Gamma)$ is \emph{non-cancellable}~\cite{Ernst2010}.  

The non-cancellable elements of a Coxeter group $W$ are intimately related to the two-sided cells of the generalized Temperley--Lieb algebra associated to $W$.  The connection between the non-cancellable elements and the two-sided cells has been examined for types $E_n$ and $\widetilde{A}_n$ in~\cite{Fan1997} and~\cite{Fan1999}, respectively.

\begin{example}
Suppose $\w=s_{1}s_{2}s_{1}$ and $\w'=s_{1}s_{2}$ are reduced expressions for $w,w' \in \FC(\C_{n})$, respectively.  We see that $w$ is left (respectively, right) weak star reducible by $s_{1}$ with respect to $s_{2}$ to $s_2 s_1$ (respectively, $s_1 s_2$), and so $w$ is not non-cancellable.  However, $w'$ is non-cancellable.
\end{example}

If $w\in \FC(\Gamma)$ and $s \in \L(w)$ (respectively, $\R(w)$), it is clear that $sw$ (respectively, $ws$) is still FC.  This implies that if $w \in \FC(\Gamma)$ is left or right weak star reducible to $u$, then $u$ is also FC. If $w \in \FC(\C_{n})$, then $w$ is left weak star reducible by $s$ with respect to $t$ if and only if $w=stv$ (reduced) when $m(s,t)=3$, or $w=stsv$ (reduced) when $m(s,t)=4$.   Note that this characterization applies to $\FC(B_{n})$ and $\FC(B'_{n})$, as well.  In terms of heaps, if $\w=s_{x_1}\cdots s_{x_r}$ is a reduced expression for $w \in \FC(\C_{n})$, then $w$ is left weak star reducible by $s$ with respect to $t$ if and only if (i) there is an entry in $H(\w)$ labeled by $s$ that is not covered by any other entry, and (ii) the heap $H(t\w)$ contains one of the convex subheaps of Lemma~\ref{lem:impermissible heap configs}.  Of course, we have an analogous statement for right weak star reducible.

The main result in~\cite{Ernst2010} is the classification of the non-cancellable elements in Coxeter groups of types $B_n$ and $\C_n$.  For type $B_n$, $w$ is non-cancellable if and only if $w$ is equal to either a product of commuting generators, $s_{1}s_{2}u$, or $s_{2}s_{1}u$, where $u$ is a product of commuting generators with $s_{1}, s_{2}, s_{3} \notin \supp(u)$.  We have an analogous statement for type $B'_n$, where $s_{1}$ and $s_{2}$ are replaced with $s_{n+1}$ and $s_{n}$, respectively.  The type $\C_n$ non-cancellable elements are comprised of reduced products consisting of a type $B_n$ non-cancellable element times a type $B'_n$ non-cancellable element with non-overlapping supports, the type I elements having left and right descent sets equal to one of the end generators, and all of the type II elements. 

With respect to weak star reductions, computation involving monomial basis elements is ``well-behaved", as the next remark illustrates.

\begin{remark}\label{rem:monomial weak star reductions}
If $w \in \FC(\C_{n})$ is left weak star reducible by $s$ with respect to $t$, then $w=stv$ (reduced) when $m(s,t)=3$ and $w=stsv$ (reduced) when $m(s,t)=4$.  In this case, we have
\[
b_{t}b_{w}=\begin{cases}
b_{tv},   & \text{if } m(s,t)=3\\
2b_{tsv},   & \text{if } m(s,t)=4.
\end{cases}
\]
It is important to note that $\ell(tv)=\ell(w)-1$ when $m(s,t)=3$ and $\ell(tsv)=\ell(w)-1$ when $m(s,t)=4$.  We have a similar characterization for right weak star reducibility.  
\end{remark}

It is tempting to think that if $b_{w}$ is a monomial basis element such that $b_{t}b_{w}=2^{c}b_{y}$, where $c\in \{0,1\}$ and $\ell(y)<\ell(w)$, then $w$ is weak star reducible by some $s$ with respect to $t$, where $m(s,t)\geq 3$.  However, this is not true.  For example, let $w=s_{1}s_{2}s_{3}s_{4} \in \FC(\C_{n})$ with $n \geq 3$, so that $m(s_{2},s_{3})=3$, and let $t=s_{3}$.  Then
{\allowdisplaybreaks
\begin{align*}
b_{t}b_{w} &= b_{3}b_{1}b_{2}b_{3}b_{4} \\
&= b_{1}b_{3}b_{2}b_{3}b_{4}\\
&= b_{1}b_{3}b_{4} \\
&= b_{s_{1}s_{3}s_{4}}.
\end{align*}}%
We see that $\ell(s_{1}s_{3}s_{4}) < \ell(w)$, but $w$ is not left weak star reducible by $s_{3}$ (or any generator).  

The next lemma is useful for reversing the multiplication of monomials corresponding to weak star reductions.

\begin{lemma}\label{lem:weak star reverse}
Let $w \in \FC(\C_{n})$. If $w$ is left weak star reducible by $s$ with respect to $t$, then
\[
b_{s}b_{t}b_{w}=\begin{cases}
b_{w},   & \text{if } m(s,t)=3\\
2b_{w},   & \text{if } m(s,t)=4.
\end{cases}
\]
We have an analogous statement if $w$ is right weak star reducible by $s$ with respect to $t$.
\end{lemma}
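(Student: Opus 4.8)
The plan is to reduce immediately to the explicit factorization of $w$ supplied by Remark~\ref{rem:wsrm properties} and then apply the defining relations of $\TL(\C_n)$ from Theorem~\ref{thm:affine C relations}. Suppose first that $w$ is left weak star reducible by $s$ with respect to $t$ and that $m(s,t)=3$. By Remark~\ref{rem:wsrm properties}, $w$ has a reduced expression of the form $w=stv$, so that $b_{w}=b_{s}b_{t}b_{v}$, where $b_v$ is the monomial associated to any reduced expression for $v$. Then
\[
b_{s}b_{t}b_{w}=b_{s}b_{t}b_{s}b_{t}b_{v}=(b_{s}b_{t}b_{s})b_{t}b_{v}=b_{s}b_{t}b_{v}=b_{w},
\]
using relation (iii) of Theorem~\ref{thm:affine C relations} in the third equality.

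Next, suppose $m(s,t)=4$. By Remark~\ref{rem:wsrm properties}, $w$ has a reduced expression of the form $w=stsv$, so $b_{w}=b_{s}b_{t}b_{s}b_{v}$. Then
\[
b_{s}b_{t}b_{w}=b_{s}b_{t}b_{s}b_{t}b_{s}b_{v}=(b_{s}b_{t}b_{s}b_{t})b_{s}b_{v}=2b_{s}b_{t}b_{s}b_{v}=2b_{w},
\]
using relation (iv) of Theorem~\ref{thm:affine C relations}. Since weak star reducibility forces $m(s,t)\geq 3$ and $m(s,t)\leq 4$ in type $\C_n$, these two cases are exhaustive. The statement for a right weak star reducible $w$ follows by the symmetric argument, writing $w=vts$ (when $m(s,t)=3$) or $w=vsts$ (when $m(s,t)=4$) via the right-handed version of Remark~\ref{rem:wsrm properties} and applying the same relations on the right.

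There is essentially no obstacle here: the only point requiring a modicum of care is that we are entitled to write $b_{w}=b_{s}b_{t}b_{v}$ (respectively $b_{s}b_{t}b_{s}b_{v}$), which holds because $stv$ (respectively $stsv$) is a genuine reduced expression for $w$ by Remark~\ref{rem:wsrm properties} and because $b_{w}$ is independent of the chosen reduced expression.
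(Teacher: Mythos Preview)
Your proof is correct and essentially identical to the paper's. Both arguments invoke the factorization $w=stv$ or $w=stsv$ from Remark~\ref{rem:wsrm properties} and then apply the relations of Theorem~\ref{thm:affine C relations}; the only cosmetic difference is that the paper first records $b_{t}b_{w}$ (as in Remark~\ref{rem:monomial weak star reductions}) and then multiplies by $b_{s}$, whereas you expand $b_{s}b_{t}b_{w}$ in one line and apply the relation directly.
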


\begin{proof}
We can write $w=stv$ (reduced) when $m(s,t)=3$ or $w=stsv$ (reduced) when $m(s,t)=4$, so that
\[
b_{t}b_{w}=\begin{cases}
b_{tv},   & \text{if } m(s,t)=3\\
2b_{tsv},   & \text{if } m(s,t)=4
\end{cases}
\]
by Remark~\ref{rem:monomial weak star reductions}. Therefore, we have
{\allowdisplaybreaks
\begin{align*}
b_{s}b_{t}b_{w}=&\begin{cases}
b_{s}b_{tv},   & \text{if } m(s,t)=3\\
2b_{s}b_{tsv},   & \text{if } m(s,t)=4,
\end{cases}\\
=& \begin{cases}
b_{stv},   & \text{if } m(s,t)=3\\
2b_{stsv},   & \text{if } m(s,t)=4,
\end{cases}\\
=& \begin{cases}
b_{w},   & \text{if } m(s,t)=3\\
2b_{w},   & \text{if } m(s,t)=4.
\end{cases}
\end{align*}}%
\end{proof}

\end{subsection}

\end{section}


\begin{section}{Diagram algebras}\label{sec:diagram algebras}

This section summarizes Sections~3--5 of~\cite{Ernst2012} and its goal is to familiarize the reader with the necessary background on diagram algebras.  We refer the reader to~\cite{Ernst2008,Ernst2012} for additional details.  Our diagram algebras possess many of the same features as those already appearing in the literature, however the typical developments are too restrictive to accomplish the task of finding a faithful diagrammatic representation of the infinite dimensional Temperley--Lieb algebra of type $\C_n$.  Yet, our approach is modeled after~\cite{Green2003,Green2012,Jones1999,Martin2007}.


\begin{subsection}{Undecorated diagrams}

The \emph{standard $k$-box} is a rectangle with $2k$ marks points, called \emph{nodes} (or \emph{vertices}) labeled as in Figure~\ref{fig:k-box}.  We will refer to the top of the rectangle as the \emph{north face} and the bottom as the \emph{south face}.  Sometimes, it will be useful for us to think of the standard $k$-box as being embedded in the plane.  In this case, we put the lower left corner of the rectangle at the origin such that each node $i$ (respectively, $i'$) is located at the point $(i,1)$ (respectively, $(i,0)$).

\begin{figure}[!ht]
\centering
\begin{tikzpicture}[scale=.75]
\draw[gray,thick] (0,0) rectangle (6,1.5);
\foreach \x in {1,2,5} \filldraw (\x,0) circle (1pt);
\foreach \x in {1,2,5} \filldraw (\x,1.5) circle (1pt);
\draw (1,1.5) node[above]{\tiny $1$};
\draw (2,1.5) node[above]{\tiny $2$};
\draw (5,1.5) node[above]{\tiny $k$};
\draw (1,0) node[below]{\tiny $1'$};
\draw (2,0) node[below]{\tiny $2'$};
\draw (5,0) node[below]{\tiny $k'$};
\draw (3.5,1.5) node[above]{\tiny $\cdots$};
\draw (3.5,0) node[below]{\tiny $\cdots$};
\end{tikzpicture}
\caption{Standard $k$-box.}\label{fig:k-box}
\end{figure}
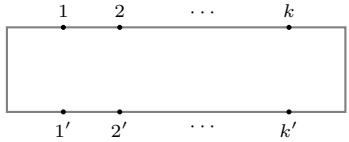

A \emph{concrete pseudo $k$-diagram} consists of a finite number of disjoint curves (planar), called \emph{edges}, embedded in the standard $k$-box.  Edges may be closed (isotopic to circles), but not if their endpoints coincide with the nodes of the box.  The nodes of the box are the endpoints of curves, which meet the box transversely.  Otherwise, the curves are disjoint from the box.  We define an equivalence relation on the set of concrete pseudo $k$-diagrams.  Two concrete pseudo $k$-diagrams are (\emph{isotopically}) \emph{equivalent} if one concrete diagram can be obtained from the other by isotopically deforming the edges such that any intermediate diagram is also a concrete pseudo $k$-diagram.  A \emph{pseudo $k$-diagram} is defined to be an equivalence class of equivalent concrete pseudo $k$-diagrams.  We denote the set of pseudo $k$-diagrams by $T_{k}(\emptyset)$.

When representing a pseudo $k$-diagram with a drawing, we pick an arbitrary concrete representative among a continuum of equivalent choices.  When no confusion can arise, we will not make a distinction between a concrete pseudo $k$-diagram and the equivalence class that it represents.  

We will refer to a closed curve occurring in the pseudo $k$-diagram as a \emph{loop edge}, or simply a \emph{loop}.  The diagram in Figure~\ref{fig:equivalent pseudo diagrams} has a single loop.  Note that we used the word ``pseudo'' in our definition to emphasize that we allow loops to appear in our diagrams.  Most examples of diagram algebras occurring in the literature ``scale away'' loops that appear.  There are loops in the diagram algebra that we are interested in preserving, so as to obtain infinitely many diagrams.  The presence of $\emptyset$ in the notation $T_{k}(\emptyset)$ is to emphasize that the edges of the diagrams are undecorated.

Let $d$ be a diagram.  If $d$ has an edge $e$ that joins node $i$ in the north face to node $j'$ in the south face, then $e$ is called a \emph{propagating edge from $i$ to $j'$}.  Propagating edges are often referred to as ``through strings'' in the literature.  If a propagating edge joins $i$ to $i'$, then we will call it a \emph{vertical propagating edge}.  If an edge is not propagating, loop edge or otherwise, it will be called \emph{non-propagating}.

\begin{example}\label{ex:equivalent pseudo diagrams}
The two concrete pseudo $5$-diagrams in Figure~\ref{fig:equivalent pseudo diagrams} are equivalent since each diagram can be obtained from the other by isotopically deforming the edges. Either diagram may be used to represent the equivalence class of the corresponding pseudo $5$-diagram. In this case, the diagram contains one loop and a single propagating edge.
\end{example}

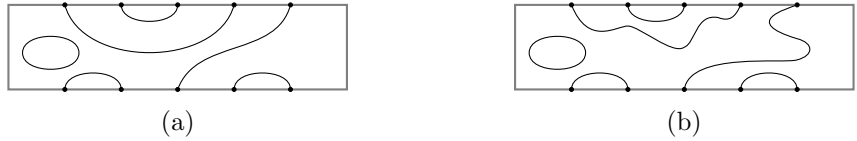
\begin{figure}[!h]
\centering
\begin{subfigure}[b]{0.4\linewidth}
\centering
\begin{tikzpicture}[scale=.75]
\draw[gray,thick] (0,0) rectangle (6,1.5);
\foreach \x in {1,2,3,4,5} \filldraw (\x,0) circle (1pt);
\foreach \x in {1,2,3,4,5} \filldraw (\x,1.5) circle (1pt);
\draw[out=-75,in=-105] (1,1.5) to (4,1.5);
\draw[out=-90,in=-90] (2,1.5) to (3,1.5);
\draw[out=90,in=90] (1,0) to (2,0);
\draw[out=90,in=90] (4,0) to (5,0);
\draw (3,0) to [out=70,in=-105] (5,1.5);
\lp{.25}{.65};
\end{tikzpicture}
\caption{}
\end{subfigure}
\begin{subfigure}[b]{0.4\linewidth}
\centering
\begin{tikzpicture}[scale=.75]
\draw[gray,thick] (0,0) rectangle (6,1.5);
\foreach \x in {1,2,3,4,5} \filldraw (\x,1.5) circle (1pt);
\foreach \x in {1,2,3,4,5} \filldraw (\x,0) circle (1pt);
\draw[out=-90,in=-90] (2,1.5) to (3,1.5);
\draw[out=90,in=90] (1,0) to (2,0);
\draw[out=90,in=90] (4,0) to (5,0);
\draw[yscale=.75] (1,2)  .. controls (1.3,1) and (1.8,1.5)  .. (2,1.5) .. controls (2.2,1.5) and (2.8,.8) .. (3,1) .. controls (3.2,1.1) and (3.2,1.8) .. (3.6,1.7) .. controls (3.8,1.6) and (3.9,1.7) .. (4,2);
\draw[yscale=.75] (5,2) .. controls (4.8,1.9) and (4,1.6) .. (5,1.2) .. controls (5.3,1.1) and (5.3,0.8) .. (5,0.7) .. controls (4.8,0.6) and (3.1,0.9) .. (3,0);
\lp{.25}{.65};
\end{tikzpicture}
\caption{}
\end{subfigure}
\caption{Isotopically equivalent concrete pseudo $5$-diagrams.}\label{fig:equivalent pseudo diagrams}
\end{figure}

If a diagram $d$ has at least one propagating edge, then we say that $d$ is \emph{dammed}.  If, on the other hand, $d$ has no propagating edges (which can only happen if $k$ is even), then we say that $d$ is \emph{undammed}.  Note that the number of non-propagating edges in the north face of a diagram must be equal to the number of non-propagating edges in the south face.  We define the function $\a: T_{k}(\emptyset) \to \Z^{+}\cup \{0\}$ via
\[
\a(d)=\text{ number of non-propagating edges in the north face of } d.
\]
There is only one diagram with $\a$-value $0$ having no loops; namely the diagram $d_{e}$ that appears in Figure~\ref{fig:a-value0}.  The maximum value that $\a(d)$ can take is $\lfloor k/2 \rfloor$.  In particular, if $k$ is even, then the maximum value that $\a(d)$ can take is $k/2$, i.e., $d$ is undammed.  On the other hand, if $\a(d)=\lfloor k/2 \rfloor$ while $k$ is odd, then $d$ has a unique propagating edge.

\begin{figure}[!ht]
\centering
\begin{tikzpicture}[scale=.75]
\draw[gray,thick] (0,0) rectangle (6,1.5);
\foreach \x in {1,2,5} \filldraw (\x,0) circle (1pt);
\foreach \x in {1,2,5} \filldraw (\x,1.5) circle (1pt);
\foreach \x in {1,2,5} \draw (\x,0) to (\x,1.5);
\draw (1,1.5) node[above]{\tiny $1$};
\draw (2,1.5) node[above]{\tiny $2$};
\draw (5,1.5) node[above]{\tiny $k$};
\draw (1,0) node[below]{\tiny $1'$};
\draw (2,0) node[below]{\tiny $2'$};
\draw (5,0) node[below]{\tiny $k'$};
\draw (3.5,1.5) node[above]{\tiny $\cdots$};
\draw (3.5,0) node[below]{\tiny $\cdots$};
\end{tikzpicture}
\caption{Only diagram having $\a$-value 0 and no loops.}\label{fig:a-value0}
\end{figure}
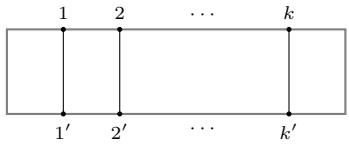

We wish to define an associative algebra that has the pseudo $k$-diagrams as a basis. Let $R$ be a commutative ring with $1$.  The associative algebra $\P_{k}(\emptyset)$ over $R$ is the free $R$-module having $T_{k}(\emptyset)$ as a basis, with multiplication defined as follows.  If $d, d' \in T_{k}(\emptyset)$, the product $d'd$ is the element of $T_{k}(\emptyset)$ obtained by placing $d'$ on top of $d$, so that node $i'$ of $d'$ coincides with node $i$ of $d$, rescaling vertically by a factor of $1/2$ and then applying the appropriate translation to recover a standard $k$-box.  For a proof that this procedure does in fact define an associative algebra see~\cite{Green2003,Jones1999}.

The (ordinary) Temperley--Lieb diagram algebra (see~\cite{Green1998a, Green2003, Jones1999, Penrose1971}) can be easily defined in terms of this formalism. Let $\DTL(A_{n})$ be the associative $\Z[\delta]$-algebra equal to the quotient of $\P_{n+1}(\emptyset)$ by the relation depicted in Figure~\ref{fig:loop}.

\begin{figure}[!ht]
\centering
$\tikz[baseline=-0.5ex,scale=.7]{\lp{1}{0};}  = \delta$
\caption{Defining relation of $\DTL(A_{n})$.}\label{fig:loop}
\end{figure}

It is well known that $\DTL(A_{n})$ is the free $\Z[\delta]$-module with basis given by the elements of $T_{n+1}(\emptyset)$ having no loops. The multiplication is inherited from the multiplication on $\P_{n+1}(\emptyset)$ except we multiply by a factor of $\delta$ for each resulting loop and then discard the loop.  We will refer to $\DTL(A_{n})$ as the \emph{type $A$ Temperley--Lieb diagram algebra}.

As $\Z[\delta]$-algebras, the Temperley--Lieb algebra $\TL(A_{n})$ that was briefly discussed in Section~\ref{sec:intro} is isomorphic to $\DTL(A_{n})$.  Moreover, each loop-free diagram from $T_{n+1}(\emptyset)$ corresponds to a unique monomial basis element of $\TL(A_{n})$.  For more details, see~\cite{Kauffman1987,Penrose1971}.

\end{subsection}


\begin{subsection}{Decorated diagrams}

We wish to adorn the edges of a diagram with elements from an associative algebra having a basis containing $1$.  First, we need to develop some terminology and lay out a few restrictions on how we decorate our diagrams.

Let $\Omega=\{\bcirc, \btri, \wcirc, \wtri\}$ and consider the free monoid $\Omega^{*}$.  We will use the elements of $\Omega$ to adorn the edges of a diagram and we will refer to each element of $\Omega$ as a \emph{decoration}.  In particular, $\bcirc$ and $\btri$ are called \emph{closed decorations}, while $\wcirc$ and $\wtri$ are called \emph{open decorations}.   Let $\mathbf{b}=x_{1}x_{2}\cdots x_{r}$ be a finite sequence of decorations in $\Omega^{*}$.  We say that $x_{i}$ and $x_{j}$ are \emph{adjacent} in $\mathbf{b}$ if $|i-j|=1$ and we will refer to $\mathbf{b}$ as a \emph{block} of decorations of \emph{width} $r$.  Note that a block of width $1$ is a just a single decoration.  The string $\bcirc~\bcirc~\btri~\wcirc~\bcirc~\wtri~\bcirc$ is an example of a block of width 7 from $\Omega^*$.

We have several restrictions for how we allow the edges of a diagram to be decorated, which we will now outline.  Let $d$ be a fixed concrete pseudo $k$-diagram and let $e$ be an edge of $d$.

\begin{enumerate}[label=\rm{(D0)}]

\item \label{D0} If $\a(d)=0$, then $e$ is undecorated.

\end{enumerate}
In particular, the unique diagram $d_{e}$ with $\a$-value 0 and no loops is undecorated.

Subject to some restrictions, if $\a(d)>0$, we may adorn $e$ with a finite sequence of blocks of decorations $\mathbf{b}_{1}, \dots, \mathbf{b}_{m}$ such that adjacency of blocks and decorations of each block is preserved as we travel along $e$.  

If $e$ is a non-loop edge, the convention we adopt is that the decorations of the block are placed so that we can read off the sequence of decorations from left to right as we traverse $e$ from $i$ to $j'$ if $e$ is propagating, or from $i$ to $j$ (respectively, $i'$ to $j'$) with $i < j$ (respectively, $i' < j'$) if $e$ is non-propagating.

If $e$ is a loop edge, reading the corresponding sequence of decorations depends on an arbitrary choice of starting point and direction round the loop. We say two sequences of blocks are \emph{loop equivalent} if one can be changed to the other or its opposite by any cyclic permutation. Note that loop equivalence is an equivalence relation on the set of sequences of blocks.  So the sequence of blocks on a loop is only defined up to loop equivalence.  That is, if we adorn a loop edge with a sequence of blocks of decorations, we only require that adjacency be preserved.

Each decoration $x_{i}$ on $e$  has coordinates in the $xy$-plane.  In particular, each decoration has an associated $y$-value, which we will call its \emph{vertical position}.  

If $\a(d)\neq 0$, then we also require the following:

\begin{enumerate}[label=\rm{(D\arabic*)}]

\item \label{D1} All decorated edges can be deformed so as to take closed decorations to the left wall of the diagram and open decorations to the right wall simultaneously without crossing any other edges.

\item \label{D2} If $e$ is non-propagating (loop edge or otherwise), then we allow adjacent blocks on $e$ to be conjoined to form larger blocks.

\item \label{D3} If $\a(d)>1$ and $e$ is propagating, then as in~\ref{D2}, we allow adjacent blocks on $e$ to be conjoined to form larger blocks.

\item \label{unusual} If $\a(d)=1$ and $e$ is propagating, then we allow $e$ to be decorated subject to the following constraints.

\begin{enumerate}
\item All decorations occurring on propagating edges must have vertical position lower (respectively, higher) than the vertical positions of decorations occurring on the (unique) non-propagating edge in the north face (respectively, south face) of $d$.

\item If $\mathbf{b}$ is a block of decorations occurring on $e$, then no other decorations occurring on any other propagating edges may have vertical position in the range of vertical positions that $\mathbf{b}$ occupies.

\item If $\mathbf{b}_{i}$ and $\mathbf{b}_{i+1}$ are two adjacent blocks occurring on $e$, then they may be conjoined to form a larger block only if the previous requirements are not violated.
\end{enumerate}
\end{enumerate}
A \emph{concrete LR-decorated pseudo $k$-diagram} is any concrete $k$-diagram decorated by elements of $\Omega$ that satisfies conditions~\ref{D0}--\ref{unusual}.

Requirement~\ref{D1} is related to the concept of ``exposed'' that appears in context of the Temperley--Lieb algebra of type $B_n$~\cite{Green1998a,Green2001,Green2003}.  The general idea is to mimic what happens in the type $B_n$ case on both the east and west ends of the diagrams.  Note that~\ref{unusual} is an unusual requirement for decorated diagrams.  We require this feature to ensure faithfulness of our diagrammatic representation on the monomial basis elements of $\TL(\C_{n})$ indexed by the type I elements. We call a block \emph{maximal} if its width cannot be increased by conjoining it with another block without violating~\ref{unusual}.

\begin{example}\label{ex:decorated diagrams}
Here are a few examples.
\begin{enumerate}
\item \label{ex:decorated diagram1} The diagram in Figure~\ref{fig:LR-decorated1} is an example of a concrete LR-decorated pseudo $5$-diagram.  In this diagram, there are no restrictions on the relative vertical position of decorations since the $\a$-value is greater than 1.  The decorations on the unique propagating edge can be conjoined to form a maximal block of width 4.

\item \label{ex:decorated diagram2} The diagram in Figure~\ref{fig:LR-decorated2} is another example of a concrete LR-decorated pseudo $5$-diagram, but with $\a$-value 1.  We use the horizontal dotted lines to indicate that the three closed decorations on the leftmost propagating edge are in three distinct blocks.  We cannot conjoin these three decorations to form a single block because there are decorations on the rightmost propagating edge occupying vertical positions between them.  Similarly, the open decorations on the rightmost propagating edge form two distinct blocks that may not be conjoined.

\item \label{ex:decorated diagram3} Lastly, the diagram in Figure~\ref{fig:LR-decorated3} is an example of a concrete LR-decorated pseudo $6$-diagram with maximal $\a$-value and no propagating edges.
\end{enumerate}
\end{example}

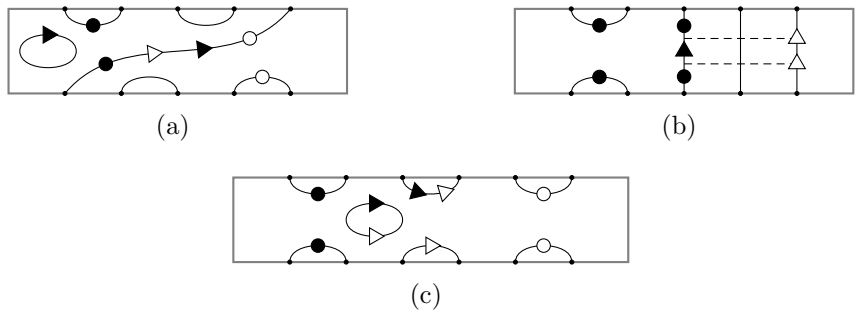
\begin{figure}[!ht]
\subcaptionbox{\label{fig:LR-decorated1}}[.4\linewidth]{
\begin{tikzpicture}[scale=.75]
\draw[gray,thick] (0,0) rectangle (6,1.5);
\foreach \x in {1,2,3,4,5} \filldraw (\x,0) circle (1pt);
\foreach \x in {1,2,3,4,5} \filldraw (\x,1.5) circle (1pt);
\draw[out=-90,in=-90] (1,1.5) to node[blackcirc, pos=0.5]{} (2,1.5);
\draw[out=-90,in=-90] (3,1.5) to (4,1.5); 
\draw[out=90,in=90] (2,0) to (3,0);
\draw[out=90,in=90] (4,0) to 
node[whitecirc, pos=0.5]{}
(5,0);
\draw[out=50,in=-130] (1,0) to
	node[blackcirc, pos=0.2]{}
	node[whitetri, pos=0.4]{}
	node[blacktri, pos=0.6]{}
	node[whitecirc, pos=0.8]{}
	(5,1.5);
\blacktrilp{.2}{.75};
\end{tikzpicture}}
\subcaptionbox{\label{fig:LR-decorated2}}[.4\linewidth]{
\begin{tikzpicture}[scale=.75]
\draw[gray,thick] (0,0) rectangle (6,1.5);
\foreach \x in {1,2,3,4,5} \filldraw (\x,0) circle (1pt);
\foreach \x in {1,2,3,4,5} \filldraw (\x,1.5) circle (1pt);
\draw[densely dashed] (3,.525) to (5,.525);
\draw[densely dashed] (3,.975) to (5,.975);
\draw[out=-90,in=-90] (1,1.5) to node[blackcirc, pos=0.5]{} (2,1.5);
\draw[out=90,in=90] (1,0) to node[blackcirc, pos=0.5]{} (2,0);
\draw (3,0) to node[blackcirc, pos=0.2]{} node[blacktri, pos=0.5]{} node[blackcirc, pos=0.8]{}(3,1.5);
\draw (4,0) to (4,1.5);
\draw (5,0) to node[whitetri, pos=0.35]{} node[whitetri, pos=0.65]{} (5,1.5);
\end{tikzpicture}}\\
\vspace{1em}
\subcaptionbox{\label{fig:LR-decorated3}}[.6\linewidth]{
\centering
\begin{tikzpicture}[scale=.75]
\draw[gray,thick] (0,0) rectangle (7,1.5);
\foreach \x in {1,2,3,4,5,6} \filldraw (\x,0) circle (1pt);
\foreach \x in {1,2,3,4,5,6} \filldraw (\x,1.5) circle (1pt);
\draw[out=-90,in=-90] (1,1.5) to node[blackcirc, pos=0.5]{} (2,1.5);
\draw[out=-90,in=-90] (3,1.5) to node[blacktri, pos=0.33]{} node[whitetri, pos=0.67]{} (4,1.5);
\draw[out=-90,in=-90] (5,1.5) to node[whitecirc, pos=0.5]{} (6,1.5);
\draw[out=90,in=90] (1,0) to node[blackcirc, pos=0.5]{} (2,0);
\draw[out=90,in=90] (3,0) to node[whitetri, pos=0.5]{} (4,0);
\draw[out=90,in=90] (5,0) to node[whitecirc, pos=0.5]{} (6,0);
\blackwhitetrilp{2}{.75};
\end{tikzpicture}}
\caption{Examples of concrete LR-decorated pseudo diagrams.}
\end{figure}

Note that an isotopy of a concrete LR-decorated pseudo $k$-diagram $d$ that preserves the faces of the standard $k$-box may not preserve the relative vertical position of the decorations even if it is mapping $d$ to an equivalent diagram.  The only time equivalence is an issue is when $\a(d)=1$.  In this case, we wish to preserve the relative vertical position of the blocks.  We define two concrete pseudo LR-decorated $k$-diagrams to be \emph{$\Omega$-equivalent} if we can isotopically deform one diagram into the other such that any intermediate diagram is also a concrete LR-decorated pseudo $k$-diagram.  Note that we do allow decorations from the same maximal block to pass each other's vertical position (while maintaining adjacency).  

An \emph{LR-decorated pseudo $k$-diagram} is defined to be an equivalence class of $\Omega$-equivalent concrete LR-decorated pseudo $k$-diagrams.  We denote the set of LR-decorated diagrams by $T_{k}^{LR}(\Omega)$. As with pseudo $k$-diagrams, when representing an LR-decorated pseudo $k$-diagram with a drawing, we pick an arbitrary concrete representative among a continuum of equivalent choices.  When no confusion will arise, we will not make a distinction between a concrete LR-decorated pseudo $k$-diagram and the equivalence class that it represents. 

\begin{remark}\label{rem:LR-decorated}
We make several observations.
\begin{enumerate}
\item The set of LR-decorated diagrams $T_{k}^{LR}(\Omega)$ is infinite since there is no limit to the number of loops that may appear.

\item \label{rem:closed left open right} If $d$ is an undammed LR-decorated diagram, then all closed decorations occurring on an edge connecting nodes in the north face (respectively, south face) of $d$ must occur before all of the open decorations occurring on the same edge as we travel the edge from the left node to the right node. Otherwise, we would not be able to simultaneously deform decorated edges to the left and right.  Furthermore, if an edge joining nodes in the north face of $d$ is adorned with an open (respectively, closed) decoration, then no non-propagating edge occurring to the right (respectively, left) in the north face may be adorned with closed (respectively, open) decorations.  We have an analogous statement for non-propagating edges in the south face.

\item \label{rem:undammed dec loops} Loops can only be decorated by both types of decorations if $d$ is undammed.  Again, we would not be able to simultaneously deform decorated edges to the left and right, otherwise.

\item \label{rem:both decs one prop} If $d$ is a dammed LR-decorated diagram, then closed decorations (respectively, open decorations) only occur to the left (respectively, right) of and possibly on the leftmost (respectively, rightmost) propagating edge.  The only way a propagating edge can have decorations of both types is if there is a single propagating edge, which can only happen if $k$ is odd.
\end{enumerate}
\end{remark}

\begin{example}
The diagram of Figure~\ref{fig:LR-decorated3} is an example that illustrates Conditions (\ref{rem:closed left open right}) and (\ref{rem:undammed dec loops}) of Remark~\ref{rem:LR-decorated}, while the diagram of Figure~\ref{fig:LR-decorated1} illustrates Condition~(\ref{rem:both decs one prop}).
\end{example}

We define $\P_{k}^{LR}(\Omega)$ to be the free $\Z[\delta]$-module having the LR-decorated pseudo $k$-diagrams $T_{k}^{LR}(\Omega)$ as a basis.  We define multiplication in $\P_{k}^{LR}(\Omega)$ by defining multiplication in the case where $d$ and $d'$ are basis elements, and then extend bilinearly.  To calculate the product $d'd$, concatenate $d'$ and $d$.  While maintaining $\Omega$-equivalence, conjoin adjacent blocks.  According to the discussion in Section~3.2 of~\cite{Ernst2012}, $\P_{k}^{LR}(\Omega)$ is a well-defined infinite dimensional associative $\Z[\delta]$-algebra.

Our immediate goal is to define a quotient of $\P_{k}^{LR}(\Omega)$ having relations that are determined by applying local combinatorial rules to the diagrams. 

Let $R=\Z[\delta]$ and define the algebra $\V$ to be the quotient of $R\Omega^{*}$ by the following relations:
\begin{enumerate}
\item $\bcirc~\bcirc~=\btri$;
\item $\bcirc~\btri=\btri~\bcirc~=~2~\bcirc$;
\item $\wcirc~\wcirc~=~\wtri$;
\item $\wcirc\wtri~=~\wtri\wcirc~=~2~\wcirc$.
\end{enumerate}
The algebra $\V$ is associative and has a basis consisting of the identity and all finite alternating products of open and closed decorations.  For example, in $\V$ we have
\[
\bcirc~\bcirc~\wcirc~\bcirc~\wcirc~\wcirc~\bcirc~=\btri~\wcirc~\bcirc~\wtri~\bcirc,
\]
where the expression on the right is a basis element, while the expression on the left is a block of width 7, but not a basis element.  We will refer to $\V$ as our \emph{decoration algebra}.

The point is that there is no interaction between open and closed symbols.  It turns out that if $\delta=1$, the algebra $\V$ is equal to the free product of two rank 3 Verlinde algebras.  For more details, see Chapter 7 of the author's PhD thesis~\cite{Ernst2008}.

Let $\widehat{\P}_{k}^{LR}(\Omega)$ be the associative $\Z[\delta]$-algebra equal to the quotient of $\P_{k}^{LR}(\Omega)$ by the relations depicted in Figure~\ref{fig:defining relations}, where the decorations on the edges represent adjacent decorations of the same block. Note that with the exception of the relations involving loops, multiplication in $\widehat{\P}_{k}^{LR}(\Omega)$ is inherited from the relations of the decoration algebra $\V$.   Also, observe that all of the relations are local in the sense that a single reduction only involves a single edge.  As a consequence of the relations in Figure~\ref{fig:defining relations}, we also have the relations of Figure~\ref{fig:additional relations}.

\begin{figure}[!ht]
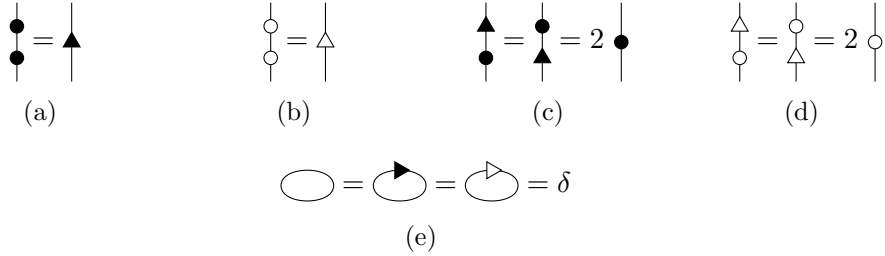

\subcaptionbox{}[.15\linewidth]{
\centering
$\tikz[baseline=-0.5ex,scale=.7]{\draw (0,-.75) to node[blackcirc, pos=0.3]{} node[blackcirc, pos=0.7]{} (0,.75);} = \tikz[baseline=-0.5ex,scale=.7]{\draw (0,-.75) to node[blacktri, pos=0.5]{} (0,.75);}$
}
\qquad
\subcaptionbox{}[.15\linewidth]{
\centering
$\tikz[baseline=-0.5ex,scale=.7]{\draw (0,-.75) to node[whitecirc, pos=0.3]{} node[whitecirc, pos=0.7]{} (0,.75);} = \tikz[baseline=-0.5ex,scale=.7]{\draw (0,-.75) to node[whitetri, pos=0.5]{} (0,.75);}$
}
\qquad
\subcaptionbox{}[.15\linewidth]{
\centering
$\tikz[baseline=-0.5ex,scale=.7]{\draw (0,-.75) to node[blackcirc, pos=0.3]{} node[blacktri, pos=0.7]{} (0,.75);} = \tikz[baseline=-0.5ex,scale=.7]{\draw (0,-.75) to node[blacktri, pos=0.3]{} node[blackcirc, pos=0.7]{} (0,.75);} = 2\ \tikz[baseline=-0.5ex,scale=.7]{\draw (0,-.75) to node[blackcirc, pos=0.5]{} (0,.75);}$
}
\qquad
\subcaptionbox{}[.15\linewidth]{
\centering
$\tikz[baseline=-0.5ex,scale=.7]{\draw (0,-.75) to node[whitecirc, pos=0.3]{} node[whitetri, pos=0.7]{} (0,.75);} = \tikz[baseline=-0.5ex,scale=.7]{\draw (0,-.75) to node[whitetri, pos=0.3]{} node[whitecirc, pos=0.7]{} (0,.75);} = 2\ \tikz[baseline=-0.5ex,scale=.7]{\draw (0,-.75) to node[whitecirc, pos=0.5]{} (0,.75);}$
}\\
\vspace{1em}
\subcaptionbox{}{
\centering
$\tikz[baseline=-0.5ex,scale=.7]{\lp{1}{0};} = \tikz[baseline=-0.5ex,scale=.7]{\blacktrilp{1}{0};} = \tikz[baseline=-0.5ex,scale=.7]{\whitetrilp{1}{0};}  = \delta$
}
\caption{Defining relations of $\widehat{\P}_{k}^{LR}(\Omega)$.}\label{fig:defining relations}
\end{figure}

\begin{figure}[!ht]
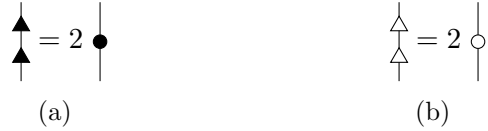

\subcaptionbox{}[.25\linewidth]{
\centering
$\tikz[baseline=-0.5ex,scale=.7]{\draw (0,-.75) to node[blacktri, pos=0.3]{} node[blacktri, pos=0.7]{} (0,.75);} = 2\ \tikz[baseline=-0.5ex,scale=.7]{\draw (0,-.75) to node[blackcirc, pos=0.5]{} (0,.75);}$
}
\qquad
\subcaptionbox{}[.25\linewidth]{
\centering
$\tikz[baseline=-0.5ex,scale=.7]{\draw (0,-.75) to node[whitetri, pos=0.3]{} node[whitetri, pos=0.7]{} (0,.75);} = 2\ \tikz[baseline=-0.5ex,scale=.7]{\draw (0,-.75) to node[whitecirc, pos=0.5]{} (0,.75);}$
}
\caption{Additional relations of $\widehat{\P}_{k}^{LR}(\Omega)$.}\label{fig:additional relations}
\end{figure}

\begin{example}
Figure~\ref{fig:ex mult1} depicts multiplication of three diagrams in $\widehat{\P}_{6}^{LR}(\Omega)$ and Figure~\ref{fig:ex mult2} shows an example where each of the diagrams and their product have $\a$-value 1.  Again, we use the dotted line to emphasize that the two closed decorations on the leftmost propagating edge belong to distinct blocks.
\end{example}

\begin{figure}[!ht]
\centering
$\begin{tikzpicture}[baseline=-0.5ex,scale=.75]
\draw[gray,thick] (0,-3) rectangle (7,-1);
\foreach \x in {1,2,3,4,5,6} \filldraw (\x,-3) circle (1pt);
\foreach \x in {1,2,3,4,5,6} \filldraw (\x,-1) circle (1pt);
\draw[out=-90,in=-90] (1,-1) to node[blackcirc, pos=0.5]{} (2,-1);
\draw[out=-90,in=-90] (3,-1) to node[whitetri, pos=0.5]{} (4,-1);
\draw[out=-90,in=-90] (5,-1) to node[whitecirc, pos=0.5]{} (6,-1);
\draw[out=90,in=90] (1,-3) to node[blackcirc, pos=0.5]{} (2,-3);
\draw[out=90,in=90] (3,-3) to (4,-3);
\draw[out=90,in=90] (5,-3) to node[whitecirc, pos=0.5]{} (6,-3);

\draw[gray,thick] (0,-1) rectangle (7,1);
\foreach \x in {1,2,3,4,5,6} \filldraw (\x,-1) circle (1pt);
\foreach \x in {1,2,3,4,5,6} \filldraw (\x,1) circle (1pt);
\draw (1,-1) to (1,1);
\draw (6,-1) to (6,1);
\draw[out=-90,in=-90] (2,1) to (3,1);
\draw[out=-90,in=-90] (4,1) to (5,1);
\draw[out=90,in=90] (2,-1) to (3,-1);
\draw[out=90,in=90] (4,-1) to (5,-1);

\draw[gray,thick] (0,1) rectangle (7,3);
\foreach \x in {1,2,3,4,5,6} \filldraw (\x,1) circle (1pt);
\foreach \x in {1,2,3,4,5,6} \filldraw (\x,3) circle (1pt);
\draw[out=-90,in=-90] (1,3) to node[blackcirc, pos=0.5]{} (2,3);
\draw[out=-90,in=-90] (3,3) to (4,3);
\draw[out=-90,in=-90] (5,3) to node[whitecirc, pos=0.5]{} (6,3);
\draw[out=90,in=90] (1,1) to node[blackcirc, pos=0.5]{} (2,1);
\draw[out=90,in=90] (3,1) to (4,1);
\draw[out=90,in=90] (5,1) to node[whitecirc, pos=0.5]{} (6,1);
\end{tikzpicture}
\ =\ 2\ \ \begin{tikzpicture}[baseline=-0.5ex,scale=.75]
\draw[gray,thick] (0,-1) rectangle (7,1);
\foreach \x in {1,2,3,4,5,6} \filldraw (\x,-1) circle (1pt);
\foreach \x in {1,2,3,4,5,6} \filldraw (\x,1) circle (1pt);
\draw[out=-90,in=-90] (1,1) to node[blackcirc, pos=0.5]{} (2,1);
\draw[out=-90,in=-90] (3,1) to (4,1);
\draw[out=-90,in=-90] (5,1) to node[whitecirc, pos=0.5]{} (6,1);
\draw[out=90,in=90] (1,-1) to node[blackcirc, pos=0.5]{} (2,-1);
\draw[out=90,in=90] (3,-1) to (4,-1);
\draw[out=90,in=90] (5,-1) to node[whitecirc, pos=0.5]{} (6,-1);
\blackwhitetrilp{2}{0};
\end{tikzpicture}$
\caption{Example of multiplication in $\widehat{\P}_{6}^{LR}(\Omega)$.}\label{fig:ex mult1}
\end{figure}
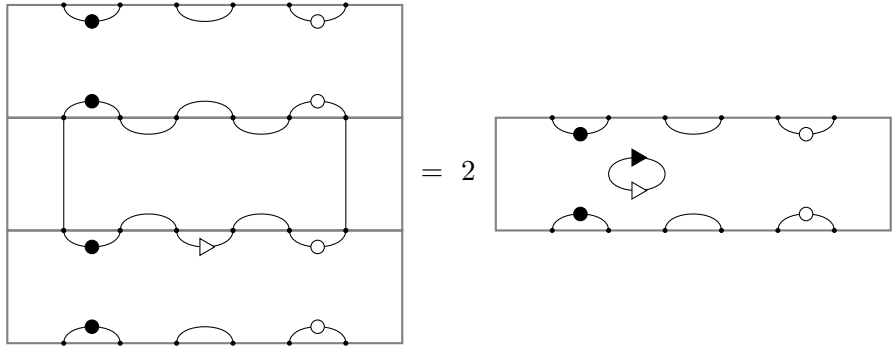 

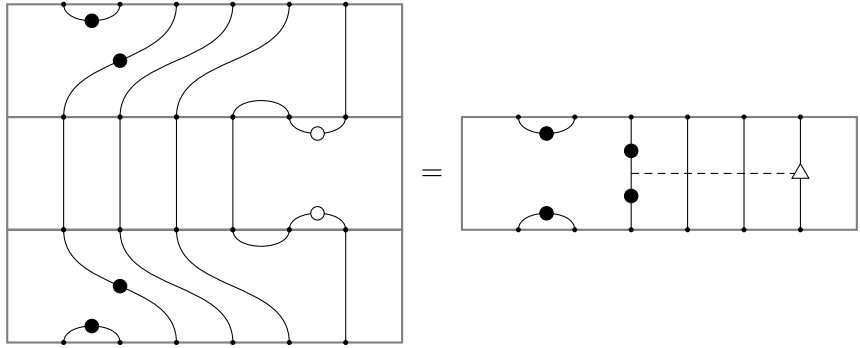
\begin{figure}[!ht]
\centering
$\begin{tikzpicture}[baseline=-0.5ex,scale=.75]
\draw[gray,thick] (0,-3) rectangle (7,-1);
\foreach \x in {1,2,3,4,5,6} \filldraw (\x,-3) circle (1pt);
\foreach \x in {1,2,3,4,5,6} \filldraw (\x,-1) circle (1pt);
\draw[out=-90,in=90] (1,-1) to node[blackcirc, pos=0.5]{} (3,-3);
\draw[out=-90,in=90] (2,-1) to (4,-3);
\draw[out=-90,in=90] (3,-1) to (5,-3);
\draw[out=-90,in=-90] (4,-1) to (5,-1);
\draw (6,-1) to (6,-3);
\draw[out=90,in=90] (1,-3) to node[blackcirc, pos=0.5]{} (2,-3);

\draw[gray,thick] (0,-1) rectangle (7,1);
\foreach \x in {1,2,3,4,5,6} \filldraw (\x,-1) circle (1pt);
\foreach \x in {1,2,3,4,5,6} \filldraw (\x,1) circle (1pt);
\draw (1,-1) to (1,1);
\draw (2,-1) to (2,1);
\draw (3,-1) to (3,1);
\draw (4,-1) to (4,1);
\draw[out=-90,in=-90] (5,1) to node[whitecirc, pos=0.5]{} (6,1);
\draw[out=90,in=90] (5,-1) to node[whitecirc, pos=0.5]{} (6,-1);

\draw[gray,thick] (0,1) rectangle (7,3);
\foreach \x in {1,2,3,4,5,6} \filldraw (\x,1) circle (1pt);
\foreach \x in {1,2,3,4,5,6} \filldraw (\x,3) circle (1pt);
\draw[out=-90,in=-90] (1,3) to node[blackcirc, pos=0.5]{} (2,3);
\draw[out=90,in=90] (4,1) to (5,1);
\draw[out=90,in=-90] (1,1) to node[blackcirc, pos=0.5]{} (3,3);
\draw[out=90,in=-90] (2,1) to (4,3);
\draw[out=90,in=-90] (3,1) to (5,3);
\draw (6,1) to (6,3);
\end{tikzpicture}
\ =\ \begin{tikzpicture}[baseline=-0.5ex,scale=.75]
\draw[gray,thick] (0,-1) rectangle (7,1);
\foreach \x in {1,2,3,4,5,6} \filldraw (\x,-1) circle (1pt);
\foreach \x in {1,2,3,4,5,6} \filldraw (\x,1) circle (1pt);
\draw[densely dashed] (3,0) to (6,0);
\draw[out=-90,in=-90] (1,1) to node[blackcirc, pos=0.5]{} (2,1);
\draw[out=90,in=90] (1,-1) to node[blackcirc, pos=0.5]{} (2,-1);
\draw (3,-1) to node[blackcirc, pos=0.3]{} node[blackcirc, pos=0.7]{}(3,1);
\draw (4,-1) to (4,1);
\draw (5,-1) to (5,1);
\draw (6,-1) to node[whitetri, pos=0.5]{} (6,1);
\end{tikzpicture}$
\caption{Example of multiplication in $\widehat{\P}_{6}^{LR}(\Omega)$ with diagrams having $\a$-value 1.}\label{fig:ex mult2}
\end{figure}

Define the \emph{simple diagrams} $d_{1}, d_{2}, \dots, d_{n+1}$ as shown in Figure~\ref{fig:simple diagrams}.  Note that each of the simple diagrams is a basis element of $\widehat{\P}_{n+2}^{LR}(\Omega)$.   Let $\DTL(\C_n)$ be the $\Z[\delta]$-subalgebra of $\widehat{\P}_{n+2}^{LR}(\Omega)$ generated (as a unital algebra) by $d_{1}, d_{2}, \dots, d_{n+1}$ with multiplication inherited from $\widehat{\P}_{n+2}^{LR}(\Omega)$. Note that $\DTL(\C_n)$ was denoted by $\D_n$ in~\cite{Ernst2012}. As we shall see in Theorem~\ref{thm:main result}, the diagram algebra $\DTL(\C_n)$ is a faithful representation of $\TL(\C_n)$.

\begin{figure}[!ht]
\begin{align*}
d_{1} & = \begin{tikzpicture}[baseline=-0.5ex,scale=.75]
\draw[gray,thick] (0,-1) rectangle (9,1);
\foreach \x in {1,2,3,8} \filldraw (\x,-1) circle (1pt);
\foreach \x in {1,2,3,8} \filldraw (\x,1) circle (1pt);
\node[above] at (1,1) {\tiny $\phantom{+}1\phantom{+}$};
\node[above] at (2,1) {\tiny $\phantom{+}2\phantom{+}$};
\node[above] at (3,1) {\tiny $\phantom{+}3\phantom{+}$};
\node[above] at (8,1) {\tiny $n+2$};
\draw (3,-1) to (3,1);
\node at (5.5,0) {\tiny $\cdots$};
\draw (8,-1) to (8,1);
\draw[out=-90,in=-90] (1,1) to node[blackcirc, pos=0.5]{} (2,1);
\draw[out=90,in=90] (1,-1) to node[blackcirc, pos=0.5]{} (2,-1);
\end{tikzpicture}\\ 
& \vdots \\
d_{i} & = \begin{tikzpicture}[baseline=-0.5ex,scale=.75]
\draw[gray,thick] (0,-1) rectangle (9,1);
\foreach \x in {1,3,4,5,6,8} \filldraw (\x,-1) circle (1pt);
\foreach \x in {1,3,4,5,6,8} \filldraw (\x,1) circle (1pt);
\node[above] at (1,1) {\tiny $\phantom{+}1\phantom{+}$};
\node[above] at (3,1) {\tiny $i-1$};
\node[above] at (4,1) {\tiny $\phantom{+}i\phantom{+}$};
\node[above] at (5,1) {\tiny $i+1$};
\node[above] at (6,1) {\tiny $i+2$};
\node[above] at (8,1) {\tiny $n+2$};
\draw (1,-1) to (1,1);
\node at (2,0) {\tiny $\cdots$};
\node at (7,0) {\tiny $\cdots$};
\draw (3,-1) to (3,1);
\draw (6,-1) to (6,1);
\draw (8,-1) to (8,1);
\draw[out=-90,in=-90] (4,1) to (5,1);
\draw[out=90,in=90] (4,-1) to (5,-1);
\end{tikzpicture}\\
& \vdots \\
d_{n+1} & = \begin{tikzpicture}[baseline=-0.5ex,scale=.75]
\draw[gray,thick] (0,-1) rectangle (9,1);
\foreach \x in {1,6,7,8} \filldraw (\x,-1) circle (1pt);
\foreach \x in {1,6,7,8} \filldraw (\x,1) circle (1pt);
\node[above] at (1,1) {\tiny $\phantom{+}1\phantom{+}$};
\node[above] at (6,1) {\tiny $\phantom{+}n-1\phantom{+}$};
\node[above] at (7,1) {\tiny $\phantom{+}n\phantom{+}$};
\node[above] at (8,1) {\tiny $n+2$};
\draw (1,-1) to (1,1);
\node at (3.5,0) {\tiny $\cdots$};
\draw (6,-1) to (6,1);
\draw[out=-90,in=-90] (7,1) to node[whitecirc, pos=0.5]{} (8,1);
\draw[out=90,in=90] (7,-1) to node[whitecirc, pos=0.5]{} (8,-1);
\end{tikzpicture}
\end{align*}
\caption{Simple diagrams.}\label{fig:simple diagrams}
\end{figure}
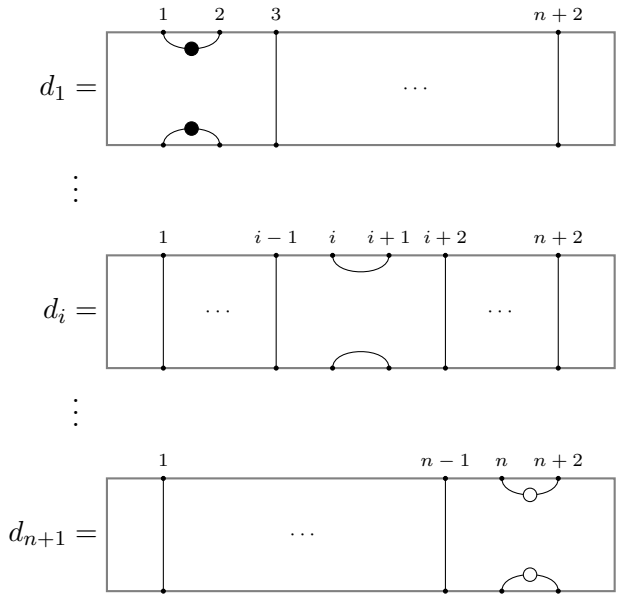

\end{subsection}


\begin{subsection}{Admissible diagrams}\label{subsec:admissible}  

We will now review the description of the basis for $\DTL(\C_n)$ that was introduced in~\cite{Ernst2012}. Let $d$ be an LR-decorated diagram.  Then we say that $d$ is \emph{$\C$-admissible}, or simply \emph{admissible}, if the following axioms are satisfied.
\begin{enumerate}[label=\rm{(C\arabic*)}]
\item \label{C1} The only loops that may appear are equivalent to the one in Figure~\ref{fig:permissible loop}.
\begin{figure}[!ht]
\centering
\begin{tikzpicture}[baseline=-0.5ex]
\blackwhitetrilp{0}{0};
\end{tikzpicture}
\caption{Only allowable loop in $\C$-admissible diagrams.}\label{fig:permissible loop}
\end{figure}
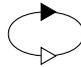

\item \label{C2} If $d$ is undammed (which can only happen if $n$ is even), then the (non-propagating) edges joining nodes $1$ and $1'$ (respectively, nodes $n+2$ and $(n+2)'$) must be decorated with a $\bcirc$ (respectively, $\wcirc$).  Furthermore, these are the only $\bcirc$ (respectively, $\wcirc$) decorations that may occur on $d$ and must be the first (respectively, last) decorations on their respective edges.

\item \label{C3} Assume $d$ has exactly one propagating edge $e$ (which can only happen if $n$ is odd). Then $e$ may be decorated by an alternating sequence of $\btri$ and $\wtri$ decorations.  If $e$ is decorated by both open and closed decorations and is connected to node 1 (respectively, $1'$), then the first (respectively, last) decoration occurring on $e$ must be a $\bcirc$.  Similarly, if $e$ is connected to node $n+2$ (respectively, $(n+2)'$), then the first (respectively, last) decoration occurring on $e$ must be a $\wcirc$.  If $e$ joins $1$ to $1'$ (respectively, $n+2$ to $(n+2)'$) and is decorated by a single decoration, then $e$ is decorated by a single $\btri$ (respectively, $\wtri$).  Furthermore, if there is a non-propagating edge connected to $1$ or $1'$ (respectively, $n+2$ or $(n+2)'$) it must be decorated only by a single $\bcirc$ (respectively, $\wcirc$).  Finally, no other $\bcirc$ or $\wcirc$ decorations appear on $d$.

\item \label{C4} Assume $d$ is dammed with $\a(d)>1$ and has more than one propagating edge.  If there is a propagating edge joining $1$ to $1'$ (respectively, $n+2$ to $(n+2)'$), then it is decorated by a single $\btri$ (respectively, $\wtri$).  Otherwise, both edges leaving either of $1$ or $1'$ (respectively, $n+2$ or $(n+2)'$) are each decorated by a single $\bcirc$ (respectively, $\wcirc$) and there are no other $\bcirc$ or $\wcirc$ decorations appearing on $d$.

\item \label{C5} If $\a(d)=1$, then the western end of $d$ is equal to one of the diagrams in Figure~\ref{fig:western C5}, where $\mathbf{B}$ represents a sequence of blocks (possibly empty) such that each block is a single $\btri$ and the diagram in Figure~\ref{fig:western C5b} can only occur if $d$ is not decorated by any open decorations.  Also, the occurrences of the $\bcirc$ decorations occurring on the propagating edge have the highest (respectively, lowest) relative vertical position of all decorations occurring on any propagating edge.  In particular, if the sequence of blocks $\mathbf{B}$ in Figure~\ref{fig:western C5d} (respectively, Figure~\ref{fig:western C5e}) is empty, then the $\bcirc$ decoration has the highest (respectively, lowest) relative vertical position among all decorations occurring on propagating edges.  We have an analogous requirement for the eastern end of $e$, where the closed decorations are replaced with open decorations.  Furthermore, if there is a non-propagating edge connected to $1$ or $1'$ (respectively, $n+2$ or $(n+2)'$) it must be decorated only by a single $\bcirc$ (respectively, $\wcirc$).  Finally, no other $\bcirc$ or $\wcirc$ decorations appear on $d$.
\end{enumerate}

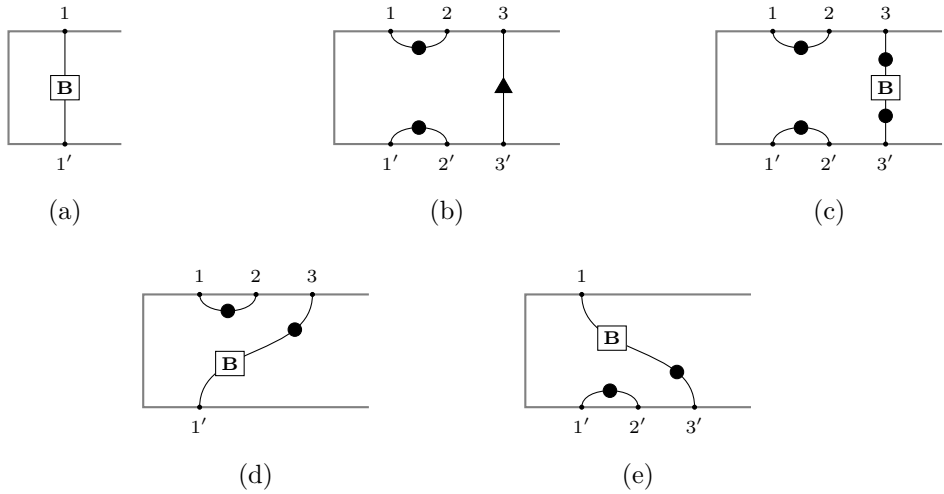
\begin{figure}[!ht]
\subcaptionbox{}[.3\linewidth]{\begin{tikzpicture}[baseline=-0.5ex,scale=.75]
\draw[gray,thick] (2,-1) to (0,-1) to (0,1) to (2,1);
\foreach \x in {1} \filldraw (\x,-1) circle (1pt);
\foreach \x in {1} \filldraw (\x,1) circle (1pt);
\node[above] at (1,1) {\tiny $\phantom{+}1\phantom{+}$};
\node[below] at (1,-1) {\tiny $\phantom{+}1'\phantom{+}$};
\draw (1,-1) to node[rectangle, draw=black, inner sep=2.2pt, fill=white, pos=0.5]{\tiny $\mathbf{B}$} (1,1);
\end{tikzpicture}
}
\subcaptionbox{\label{fig:western C5b}}[.3\linewidth]{\begin{tikzpicture}[baseline=-0.5ex,scale=.75]
\draw[gray,thick] (4,-1) to (0,-1) to (0,1) to (4,1);
\foreach \x in {1,2,3} \filldraw (\x,-1) circle (1pt);
\foreach \x in {1,2,3} \filldraw (\x,1) circle (1pt);
\node[above] at (1,1) {\tiny $\phantom{+}1\phantom{+}$};
\node[below] at (1,-1) {\tiny $\phantom{+}1'\phantom{+}$};
\node[above] at (2,1) {\tiny $\phantom{+}2\phantom{+}$};
\node[below] at (2,-1) {\tiny $\phantom{+}2'\phantom{+}$};
\node[above] at (3,1) {\tiny $\phantom{+}3\phantom{+}$};
\node[below] at (3,-1) {\tiny $\phantom{+}3'\phantom{+}$};
\draw (3,-1) to node[blacktri, pos=0.5]{} (3,1);
\draw[out=-90,in=-90] (1,1) to node[blackcirc, pos=0.5]{} (2,1);
\draw[out=90,in=90] (1,-1) to node[blackcirc, pos=0.5]{} (2,-1);
\end{tikzpicture}
}
\subcaptionbox{}[.3\linewidth]{\begin{tikzpicture}[baseline=-0.5ex,scale=.75]
\draw[gray,thick] (4,-1) to (0,-1) to (0,1) to (4,1);
\foreach \x in {1,2,3} \filldraw (\x,-1) circle (1pt);
\foreach \x in {1,2,3} \filldraw (\x,1) circle (1pt);
\node[above] at (1,1) {\tiny $\phantom{+}1\phantom{+}$};
\node[below] at (1,-1) {\tiny $\phantom{+}1'\phantom{+}$};
\node[above] at (2,1) {\tiny $\phantom{+}2\phantom{+}$};
\node[below] at (2,-1) {\tiny $\phantom{+}2'\phantom{+}$};
\node[above] at (3,1) {\tiny $\phantom{+}3\phantom{+}$};
\node[below] at (3,-1) {\tiny $\phantom{+}3'\phantom{+}$};
\draw (3,-1) to node[blackcirc, pos=0.25]{} node[rectangle, draw=black, fill=white, inner sep=2.2pt, pos=0.5]{\tiny $\mathbf{B}$} node[blackcirc, pos=0.75]{} (3,1);
\draw[out=-90,in=-90] (1,1) to node[blackcirc, pos=0.5]{} (2,1);
\draw[out=90,in=90] (1,-1) to node[blackcirc, pos=0.5]{} (2,-1);
\end{tikzpicture}
}\\
\vspace{1em}
\subcaptionbox{\label{fig:western C5d}}[.3\linewidth]{\begin{tikzpicture}[baseline=-0.5ex,scale=.75]
\draw[gray,thick] (4,-1) to (0,-1) to (0,1) to (4,1);
\foreach \x in {1} \filldraw (\x,-1) circle (1pt);
\foreach \x in {1,2,3} \filldraw (\x,1) circle (1pt);
\node[above] at (1,1) {\tiny $\phantom{+}1\phantom{+}$};
\node[below] at (1,-1) {\tiny $\phantom{+}1'\phantom{+}$};
\node[above] at (2,1) {\tiny $\phantom{+}2\phantom{+}$};
\node[above] at (3,1) {\tiny $\phantom{+}3\phantom{+}$};
\draw[out=90,in=-90] (1,-1) to node[rectangle, inner sep=2.2pt, draw=black, fill=white, pos=0.34]{\tiny $\mathbf{B}$} node[blackcirc, pos=0.75]{} (3,1);
\draw[out=-90,in=-90] (1,1) to node[blackcirc, pos=0.5]{} (2,1);
\end{tikzpicture}
}
\subcaptionbox{\label{fig:western C5e}}[.3\linewidth]{\begin{tikzpicture}[baseline=-0.5ex,scale=.75]
\draw[gray,thick] (4,-1) to (0,-1) to (0,1) to (4,1);
\foreach \x in {1,2,3} \filldraw (\x,-1) circle (1pt);
\foreach \x in {1} \filldraw (\x,1) circle (1pt);
\node[above] at (1,1) {\tiny $\phantom{+}1\phantom{+}$};
\node[below] at (1,-1) {\tiny $\phantom{+}1'\phantom{+}$};
\node[below] at (2,-1) {\tiny $\phantom{+}2'\phantom{+}$};
\node[below] at (3,-1) {\tiny $\phantom{+}3'\phantom{+}$};
\draw[out=-90,in=90] (1,1) to node[rectangle, draw=black, inner sep=2.2pt,fill=white, pos=0.34]{\tiny $\mathbf{B}$} node[blackcirc, pos=0.75]{} (3,-1);
\draw[out=90,in=90] (1,-1) to node[blackcirc, pos=0.5]{} (2,-1);
\end{tikzpicture}
}
\caption{Western end of diagrams exhibiting axiom~\ref{C5}.}\label{fig:western C5}
\end{figure}

Our definition of $\C$-admissible is motivated by the definition of $B$-admissible given by Green in~\cite[Definition 2.2.4]{Green2001} for diagrams in the context of type $B_n$.  Since the Coxeter graph of type $\C_n$ is type $B$ at ``both ends'', the general idea is to build the axioms of $B$-admissible into our definition of $\C$-admissible on the left and right sides of our diagrams.

Note that the only time an admissible diagram $d$ can have an edge adorned with both open and closed decorations is if $d$ is undammed (which only happens when $n$ is even) or if $d$ has a single propagating edge (which only happens when $n$ is odd).  The diagrams in Figure~\ref{fig:LR-decorated1} and Figure~\ref{fig:LR-decorated3} demonstrate this phenomenon (see Example~\ref{ex:decorated diagrams}).

If $d$ is an admissible diagram with $\a(d)=1$, then the restrictions on the relative vertical position of decorations on propagating edges along with axiom~\ref{C5} imply that the relative vertical positions of closed decorations on the leftmost propagating edge and open decorations on the rightmost propagating edge must alternate.  In particular, the number of closed decorations occurring on the leftmost propagating edge differs from the number of open decorations occurring on the rightmost propagating edge by at most 1.  For example, if $d$ is the diagram in Figure~\ref{fig:example C5}, where $\mathbf{B}_1$ on the leftmost propagating edge consists of $k$ blocks, each consisting of a single $\btri$ decoration, then $\mathbf{B}_2$ must consist of $k$ $\wtri$ decorations, as well. 

\begin{figure}[!ht]
\begin{tikzpicture}[baseline=-0.5ex,scale=.75]
\draw[gray,thick] (0,-1) rectangle (9,1);
\foreach \x in {1,2,5,6,7,8} \filldraw (\x,-1) circle (1pt);
\foreach \x in {1,2,3,4,7,8} \filldraw (\x,1) circle (1pt);
\draw[out=90,in=-90] (1,-1) to node[rectangle, draw=black, inner sep=2.2pt,fill=white, pos=0.4]{\tiny $\mathbf{B}_1$} node[blackcirc, pos=0.75]{} (3,1);
\draw[out=90,in=-90] (2,-1) to (4,1);
\draw[out=90,in=-90] (5,-1) to (7,1);
\draw[out=90,in=-90] (6,-1) to node[whitecirc, pos=0.25]{} node[rectangle, draw=black, inner sep=2.2pt,fill=white, pos=0.6]{\tiny $\mathbf{B}_2$} (8,1);
\node at (4.5,0) {\tiny $\cdots$};
\draw[out=-90,in=-90] (1,1) to node[blackcirc, pos=0.5]{} (2,1);
\draw[out=90,in=90] (7,-1) to node[whitecirc, pos=0.5]{} (8,-1);
\end{tikzpicture}
\caption{Example of a diagram exhibiting axiom~\ref{C5}.}\label{fig:example C5}
\end{figure}
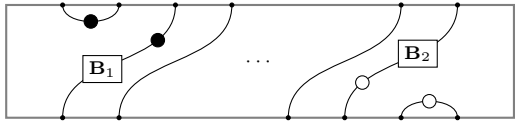

The next proposition is one of the main results of~\cite{Ernst2012}.

\begin{proposition}\label{prop:admissibles basis for D_n}
The set of admissible $(n+2)$-diagrams is a basis for $\DTL(\C_n)$. \qed
\end{proposition}

\end{subsection}

\end{section}


\begin{section}{Main results}\label{sec:main results}

This section concludes with a proof that $\TL(\C_{n})$ and $\DTL(\C_n)$ are isomorphic as $\Z[\delta]$-algebras under the correspondence induced by $b_{i} \mapsto d_{i}$.  Moreover, we show that the set of admissible diagrams corresponds to the monomial basis of $\TL(\C_{n})$.  


\begin{subsection}{A surjective homomorphism}\label{subsec:hom}

As in~\cite{Ernst2012}, define $\theta: \TL(\C_{n}) \to \DTL(\C_n)$ to be the algebra homomorphism determined by $\theta(b_{i})=d_{i}$.  The next result is~\cite[Proposition 4.1.3]{Ernst2012}.

\begin{proposition}\label{prop:surjective homomorphism}
The map $\theta$ is a surjective algebra homomorphism. \qed
\end{proposition}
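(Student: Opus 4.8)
The plan is to verify that $\theta$ is well-defined (i.e., that the assignment $b_i \mapsto d_i$ respects the defining relations of $\TL(\C_n)$), and then to establish surjectivity separately. For well-definedness, Theorem~\ref{thm:affine C relations} gives a finite presentation of $\TL(\C_n)$ with generators $b_1, \dots, b_{n+1}$ and four families of relations, so it suffices to check that the simple diagrams $d_1, \dots, d_{n+1}$ of Figure~\ref{Fig107--Fig109} satisfy the images of (i)--(iv) inside $\widehat{\P}_{n+2}^{LR}(\Omega)$ (equivalently, inside $\D_n$). These are purely local diagrammatic computations: concatenating $d_i$ with itself produces a single loop of the type in Figure~\ref{Fig110}, which by relation~\ref{Fig093--Fig095} contributes a factor of $\delta$, giving $d_i^2 = \delta d_i$; for $|i-j|>1$ the simple diagrams involve disjoint bands of nodes, so $d_i d_j = d_j d_i$ by inspection; for $|i-j|=1$ with both interior, the concatenation $d_i d_j d_i$ reduces back to $d_i$ after the edges are pulled straight; and for the two ``end'' pairs $\{1,2\}$ and $\{n,n+1\}$, the computation $d_i d_j d_i d_j$ produces exactly one closed (resp.\ open) decoration that doubles via relation~\ref{Fig087--Fig089} (resp.~\ref{Fig090--Fig092}), yielding $2 d_i d_j$. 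This is exactly the content asserted in the excerpt to have been ``easily checked'' in \cite{Ernst.D:C}, so I would cite that verification and record that $\theta$ is a well-defined algebra homomorphism.

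Surjectivity is then essentially immediate: $\D_n$ is \emph{defined} in Definition~\ref{def:D_n} as the unital $\Z[\delta]$-subalgebra of $\widehat{\P}_{n+2}^{LR}(\Omega)$ generated by $d_1, \dots, d_{n+1}$, and $\TL(\C_n)$ is generated as a unital algebra by $b_1, \dots, b_{n+1}$ (Theorem~\ref{thm:affine C relations}). Since $\theta(b_i) = d_i$ and $\theta$ is an algebra homomorphism sending $1 \mapsto 1$, its image contains all the generators $d_i$ and is a unital subalgebra, hence equals all of $\D_n$. So the image of $\theta$ is exactly the subalgebra generated by the $d_i$, which is $\D_n$ by definition.

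The only genuinely substantive point is the well-definedness check, and even there the difficulty is notational rather than conceptual: one must be careful that the local diagram reductions of Figure~\ref{Fig083--Fig095} are applied legally (respecting the $\Omega$-equivalence and the decoration-placement axioms \ref{D0}--\ref{unusual}), particularly for the end relations where a stray decoration appears and must be simplified using the relations of the decoration algebra $\V$ (namely $\bcirc\,\bcirc = \btri$ and $\bcirc\,\btri = 2\,\bcirc$, and their open analogues). Since all of this is carried out in detail in \cite[Proposition 4.1.3]{Ernst.D:C}, the proof here reduces to a citation of that result together with the trivial surjectivity argument above. In other words, the ``main obstacle'' has already been dispatched in the prequel, and what remains is bookkeeping.

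\begin{proof}
By Theorem~\ref{thm:affine C relations}, the algebra $\TL(\C_n)$ has a presentation as a unital $\Z[\delta]$-algebra on generators $b_1, \dots, b_{n+1}$ subject to relations (i)--(iv) of that theorem. To see that the assignment $b_i \mapsto d_i$ extends to a $\Z[\delta]$-algebra homomorphism $\theta \colon \TL(\C_n) \to \D_n$, it therefore suffices to verify that the simple diagrams $d_1, \dots, d_{n+1}$ of Figure~\ref{Fig107--Fig109} satisfy the images of those four relations in $\widehat{\P}_{n+2}^{LR}(\Omega)$; this is the straightforward local diagram computation carried out in \cite[Proposition 4.1.3]{Ernst.D:C}, using the loop relation of Figure~\ref{Fig093--Fig095} for (i), disjointness of the relevant bands of nodes for (ii), straightening of edges for (iii), and the doubling relations of Figures~\ref{Fig087--Fig089} and~\ref{Fig090--Fig092} (equivalently, the relations $\bcirc\,\bcirc = \btri$ and $\bcirc\,\btri = 2\,\bcirc$ of the decoration algebra $\V$, together with their open analogues) for (iv). Hence $\theta$ is a well-defined homomorphism of unital $\Z[\delta]$-algebras.

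For surjectivity, recall from Definition~\ref{def:D_n} that $\D_n$ is the unital $\Z[\delta]$-subalgebra of $\widehat{\P}_{n+2}^{LR}(\Omega)$ generated by $d_1, \dots, d_{n+1}$. Since $\theta$ is a unital algebra homomorphism with $\theta(b_i) = d_i$ for each $i$, the image $\theta(\TL(\C_n))$ is a unital $\Z[\delta]$-subalgebra of $\D_n$ containing every generator $d_i$, and therefore $\theta(\TL(\C_n)) = \D_n$. This completes the proof.
\end{proof}
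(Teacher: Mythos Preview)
Your proposal is correct and takes essentially the same approach as the paper: both simply cite \cite[Proposition 4.1.3]{Ernst.D:C} for the result, with your version additionally spelling out the (trivial) surjectivity argument and sketching the relation checks that are carried out there. One minor imprecision: the loop arising in $d_i^2$ is not of the type in Figure~\ref{Fig110} (that is the $\bcirc\,\wcirc$ loop); rather it is undecorated for $1<i<n+1$ and carries a $\btri$ (resp.\ $\wtri$) for $i=1$ (resp.\ $i=n+1$), but all three evaluate to $\delta$ by the relations in Figure~\ref{Fig093--Fig095}, so your conclusion stands.
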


To demonstrate that $\DTL(\C_n)$ is a faithful representation of $\TL(\C_n)$, it remains to show that $\theta$ is injective, which we show in Theorem~\ref{thm:main result}.

\begin{lemma}\label{lem:powers of 2 and delta for images of monomials}
If $w \in \FC(\C_{n})$, then $\theta(b_{w})=2^{k}\delta^{m}d$, where $k,m \in \Z^{+}\cup\{0\}$ and $d$ is an admissible diagram.
\end{lemma}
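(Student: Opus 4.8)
The plan is to unwind the definition of $\theta$ and then lean on two facts from \cite{Ernst.D:C}: that multiplication in $\widehat{\P}_{n+2}^{LR}(\Omega)$ is carried out by concatenation followed by purely local reductions, and that the admissible $(n+2)$-diagrams form a $\Z[\delta]$-basis of $\D_n$ (Theorem~\ref{thm:admissibles basis for D_n}). First I would write $\theta(b_w)=\theta(b_{i_1})\cdots\theta(b_{i_r})=d_{i_1}d_{i_2}\cdots d_{i_r}$, a product of simple diagrams inside $\widehat{\P}_{n+2}^{LR}(\Omega)$ (the degenerate case $r=0$ being $\theta(b_e)=d_e$).

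Next I would observe that every operation used to evaluate such a product is \emph{monomial}, i.e.\ sends a single diagram to a scalar times a single diagram and never to a sum of diagrams: concatenating two concrete diagrams yields one concrete diagram; conjoining adjacent blocks and rewriting an edge's decoration sequence into the alternating normal form of the decoration algebra $\V$ uses only the relations $\bcirc\,\btri=\btri\,\bcirc=2\,\bcirc$ and $\wcirc\,\wtri=\wtri\,\wcirc=2\,\wcirc$ together with their consequences in Figure~\ref{Fig096--Fig099}, each application contributing a factor of $2$; and each loop that is discarded contributes a single factor of $\delta$ by Figure~\ref{Fig093--Fig095}. Hence, after all reductions, $\theta(b_w)=2^{k}\delta^{m}d$ where $d$ is a single element of the diagram basis of $\widehat{\P}_{n+2}^{LR}(\Omega)$, $k$ counts the applications of the weight-$2$ decoration relations, and $m$ counts the loops removed; in particular $k,m\in\Z^{+}\cup\{0\}$.

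Finally, I would argue that $d$ is admissible. Since $\theta$ maps into $\D_n$, the element $2^{k}\delta^{m}d$ lies in $\D_n$, which by Theorem~\ref{thm:admissibles basis for D_n} has the admissible diagrams as a $\Z[\delta]$-basis; as $d$ is a single diagram-basis element of $\widehat{\P}_{n+2}^{LR}(\Omega)$ occurring with the nonzero coefficient $2^{k}\delta^{m}$, uniqueness of the expansion in the admissible basis forces $d$ to be admissible. The only delicate point is the middle step: one must be certain that no reduction in $\widehat{\P}_{n+2}^{LR}(\Omega)$ is genuinely $\Z[\delta]$-linear, i.e.\ that the decoration relations never split a diagram into a sum. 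This is guaranteed by the facts that the relations of Figure~\ref{Fig083--Fig095} are local (each involves a single edge) and that $\V$ has the alternating words as a basis with all structure constants of the form $2^{a}$; I would cite these features from \cite{Ernst.D:C} rather than reprove them. An alternative, more algebraic route would run an induction through Lemma~\ref{lem:powers of 2 and delta monomials}, writing $b_s b_w=2^{k}\delta^{m}b_{w'}$ and applying $\theta$; but since $l(w')$ can exceed $l(w)$ one would have to supplement this with an $n$-value or well-foundedness argument, so the diagrammatic line above is cleaner.
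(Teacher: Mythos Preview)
Your argument is correct. The paper's proof is a one-line induction: it invokes Proposition~5.5.1 of \cite{Ernst.D:C}, which states (in effect) that the product of a simple diagram with an admissible diagram is $2^{a}\delta^{b}$ times an admissible diagram; iterating this over the factors $d_{i_1},\dots,d_{i_r}$ gives the result directly, with admissibility built into every step.

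Your route is different in two respects. First, you separate the scalar-shape claim from the admissibility claim: you argue globally that every local relation of $\widehat{\P}_{n+2}^{LR}(\Omega)$ is monomial with coefficient a power of $2$ (or $\delta$ for loops), so the fully reduced product is $2^{k}\delta^{m}$ times a single irreducible diagram. Second, you deduce admissibility of that diagram after the fact from Theorem~\ref{thm:admissibles basis for D_n} together with the (implicit) fact that the irreducible LR-decorated diagrams are linearly independent in $\widehat{\P}_{n+2}^{LR}(\Omega)$. Your approach is more explanatory about \emph{why} only powers of $2$ and $\delta$ arise, but it leans on two structural facts from \cite{Ernst.D:C} (confluence of the rewriting and linear independence of reduced diagrams in the ambient algebra) rather than the single packaged Proposition~5.5.1. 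One small imprecision: the relations $\bcirc\,\bcirc=\btri$ and $\wcirc\,\wcirc=\wtri$ contribute a factor of $1$, not $2$, so ``each application contributing a factor of $2$'' should read ``each application contributing a factor of $2^{a}$ with $a\in\{0,1\}$.'' This does not affect the conclusion. Your final remark about the alternative inductive route via Lemma~\ref{lem:powers of 2 and delta monomials} is apt: that lemma lives in $\TL(\C_n)$, and pushing it through $\theta$ would indeed require an independent argument that the recursion terminates, which the paper sidesteps by working diagrammatically from the start.
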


\begin{proof}
Suppose $\w=s_{i_{1}}\cdots s_{i_{r}}$ is a reduced expression for $w \in \FC(\C_{n})$. By repeated applications of~\cite[Proposition~5.5.1]{Ernst2012}, we have
\[
\theta(b_{w}) = \theta(b_{i_{1}}) \cdots \theta(b_{i_{r}}) = d_{i_{1}} \cdots d_{i_{r}} = 2^{k}\delta^{m}d
\]
for some $k,m \in \Z^{+}\cup\{0\}$ and admissible diagram $d$.  
\end{proof}

Since $\theta$ is well-defined, $k$, $m$, and $d$ do not depend on the choice of reduced expression for $w$ that we start with.  We will denote the diagram $d$ from Lemma~\ref{lem:powers of 2 and delta for images of monomials} by $d_{w}$.  That is, if $w \in \FC$, then $d_{w}$ is the admissible diagram satisfying $\theta(b_{w})=2^{k}\delta^{m}d_{w}$.

If $d$ is an admissible diagram, then we say that a non-propagat\-ing edge joining $i$ to $i+1$ (respectively, $i'$ to $(i+1)'$) is \emph{simple} if it is identical to the edge joining $i$ to $i+1$ (respectively, $i'$ to $(i+1)'$) in the simple diagram $d_{i}$.  That is, an edge is simple if it joins adjacent vertices in the north face (respectively, south face) and is undecorated, except when one of the vertices is 1 or $1'$ (respectively, $n+2$ or $(n+2)'$), in which case it is decorated by only a single $\bcirc$ (respectively, $\wcirc$).  

Let $d$ be an admissible diagram.  Since $\theta$ is surjective and $\DTL(\C_n)$ is generated by the simple diagrams, there exists $w \in \FC(\C_{n})$ such that $\theta(b_{w})=d_{w}=d$.  Suppose $\w=s_{i_{1}}\cdots s_{i_{r}}$ is a reduced expression for $w$.  Then $d=d_{i_{1}}\cdots d_{i_{r}}$.   For each $d_{i_{j}}$ fix a concrete representative that has straight propagating edges and no unnecessary ``wiggling'' of the simple non-propagating edges.  Now, consider the concrete diagram that results from stacking the concrete simple diagrams $d_{i_{1}},\dots, d_{i_{r}}$, rescaling to recover the standard $(n+2)$-box, but not deforming any of the edges or applying any relations among the decorations.  We will refer to this concrete diagram as the \emph{concrete simple representation of $d_{\w}$} (which does depend on $\w$).  Since $w$ is FC and vertical equivalence respects commutation, given two different reduced expressions $\w$ and $\w'$ for $w$, the concrete simple representations $d_{\w}$ and $d_{\w'}$ will be vertically equivalent.  We define the vertical equivalence class of concrete simple representations to be the \emph{simple representation of $d_{w}$}.

\begin{example}\label{ex:vert equiv}
If $w=\z_{1,1}^{R,1}$, then the diagram in Figure~\ref{fig:ex vert equiv} is vertically equivalent to the simple representation of $d_{w}$, where the vertical dashed lines in the diagram indicate that the two curves are part of the same generator.
\end{example}

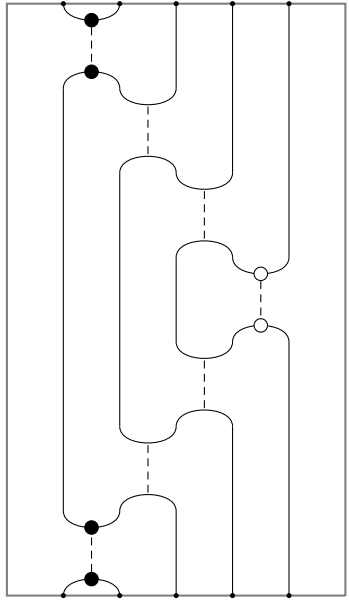
\begin{figure}[!ht]
\begin{tikzpicture}[baseline=-0.5ex,scale=.75]
\draw[gray,thick] (6,-5.25) to (0,-5.25) to (0,5.25) to (6,5.25) to (6,-5.25);
\foreach \x in {1,2,3,4,5} \filldraw (\x,-5.25) circle (1pt);
\foreach \x in {1,2,3,4,5} \filldraw (\x,5.25) circle (1pt);
\draw[out=-90,in=-90] (1,5.25) to node[blackcirc, pos=0.5](A){} (2,5.25);
\draw[out=90,in=90] (1,-5.25) to node[blackcirc, pos=0.5](B){} (2,-5.25);
\draw[out=90,in=90] (1,3.75) to node[blackcirc, pos=0.5](C){} (2,3.75);
\draw[out=-90,in=-90] (1,-3.75) to node[blackcirc, pos=0.5](D){} (2,-3.75);

\draw[out=-90,in=-90] (2,3.75) to node[inner sep=0pt](E){} (3,3.75);
\draw[out=90,in=90] (2,-3.75) to node[inner sep=0pt](F){} (3,-3.75);
\draw[out=90,in=90] (2,2.25) to node[inner sep=0pt](G){} (3,2.25);
\draw[out=-90,in=-90] (2,-2.25) to node[inner sep=0pt](H){} (3,-2.25);

\draw[out=-90,in=-90] (3,2.25) to node[inner sep=0pt](I){} (4,2.25);
\draw[out=90,in=90] (3,-2.25) to node[inner sep=0pt](J){} (4,-2.25);
\draw[out=90,in=90] (3,.75) to node[inner sep=0pt](K){} (4,.75);
\draw[out=-90,in=-90] (3,-.75) to node[inner sep=0pt](L){} (4,-.75);

\draw[out=-90,in=-90] (4,.75) to node[whitecirc, pos=0.5](M){} (5,.75);
\draw[out=90,in=90] (4,-.75) to node[whitecirc, pos=0.5](N){} (5,-.75);

\draw (1,3.75) to (1,-3.75);
\draw (2,2.25) to (2,-2.25);
\draw (3,3.75) to (3,5.25);
\draw (3,-3.75) to (3,-5.25);
\draw (3,-.75) to (3,.75);
\draw (4,2.25) to (4,5.25);
\draw (4,-2.25) to (4,-5.25);
\draw (5,.75) to (5,5.25);
\draw (5,-.75) to (5,-5.25);

\draw[densely dashed] (A) -- (C);
\draw[densely dashed] (B) -- (D);
\draw[densely dashed] (E) -- (G);
\draw[densely dashed] (F) -- (H);
\draw[densely dashed] (I) -- (K);
\draw[densely dashed] (J) -- (L);
\draw[densely dashed] (M) -- (N);
\end{tikzpicture}
\caption{Diagram corresponding to Example~\ref{ex:vert equiv}.}\label{fig:ex vert equiv}
\end{figure}

\begin{lemma}\label{lem:image zigzag}
Let $w \in \FC(\C_{n})$ be of type I.  Then 
\begin{enumerate}[label=\rm{(\arabic*)}]
\item $\theta(b_{w})=d_{w}$ with $\a(d_{w})=n(w)=1$;
\item If $w'$ is also of type I with $w \neq w'$, then $d_{w} \neq d_{w'}$;
\item If $s_{i} \in \L(w)$ (respectively, $\R(w)$), then there is a simple edge joining $i$ to $i+1$ (respectively, $i'$ to $(i+1)'$).
\end{enumerate}
\end{lemma}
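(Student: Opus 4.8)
The plan is to prove (i) and (iii) together by induction on $l(w)$, and then deduce (ii) from them. For the base case, note that type I elements have length $\geq 1$ (since $n(w)=1$), and if $l(w)=1$ then $w=s_i$ and $d_w=d_i$ is a simple diagram, for which all three claims are visible in Figure~\ref{Fig107--Fig109}. For the inductive step, first observe that because $n(w)=1$ by Proposition~\ref{prop:zigzags}, the maximal elements of the heap of $w$ form an antichain of size at most $n(w)=1$, so $\L(w)=\{s_i\}$ is a singleton; likewise $\R(w)=\{s_j\}$. Since type I elements are rigid (the remark following Definition~\ref{def:zigzags}), the reduced expression $\w=s_i s_{i_2}\cdots s_{i_r}$ is unique, and inspecting Definition~\ref{def:zigzags} one checks that $u:=s_{i_2}\cdots s_{i_r}$ is again type I (or a single generator), with $s_{i_2}\in\L(u)$ and $i_2=i\pm 1$ because $\w$ zigzags. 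Since $s_i u=w$ is reduced we have $b_i b_u=b_w$, so by Lemma~\ref{lem:powers of 2 and delta for images of monomials} and the inductive form of (i), $\theta(b_w)=\theta(b_i)\theta(b_u)=d_i\,d_u=2^k\delta^m d_w$; it remains to identify the diagram $d_i d_u$ and to show $k=m=0$.

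To do this I would stack $d_i$ on top of the concrete simple representation of $d_u$ and trace the edges through the interface, using the inductive form of (iii): the unique north non-propagating edge of $d_u$ lies at $\{i_2,i_2+1\}$, hence nodes $i$ and $i+1$ of the north face of $d_u$ are endpoints of propagating edges, which is exactly where the cap of $d_i$ attaches. One then finds that (a) the cup of $d_i$ at $\{i,i+1\}$ becomes the unique new north non-propagating edge, so $\a(d_i d_u)=1$; (b) no loop is closed off, so $m=0$; (c) the only decorations that can land on the new north edge are the single $\bcirc$ (if $i=1$) or single $\wcirc$ (if $i=n+1$) carried by $d_i$ itself, so this edge is simple; and (d) a scalar $2$ could only be produced inside $d_u$ by a reduction $\bcirc\,\btri\mapsto 2\,\bcirc$ or $\wtri\,\wcirc\mapsto 2\,\wcirc$, i.e.\ by two consecutive closed (resp.\ open) decorations being deposited on a single strand, which would force a subword $s_1 s_2 s_1 s_2$ or $s_n s_{n+1}s_n s_{n+1}$ in $\w$; these are forbidden for type I elements by Remark~\ref{rem:illegal convex chains}(ii), so $k=0$. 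The south face is untouched by this step, and a symmetric argument peeling off $\R(w)$ from the bottom handles the south non-propagating edge and the claim for $\R(w)$. Combined with $n(w)=1$ from Proposition~\ref{prop:zigzags}, this gives (i) and (iii). The delicate point — and the main obstacle — is precisely (b) and (d): verifying that the zigzag never winds back on itself to close a loop and never triggers a $\bcirc\btri$ or $\wtri\wcirc$ collapse, so that the scalar is exactly $1$; this is where rigidity and the exclusion of $s_1 s_2 s_1 s_2$ and $s_n s_{n+1}s_n s_{n+1}$ do the real work.

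For (ii), I would argue by induction on $l(w)+l(w')$. If $d_w=d_{w'}$, then by (iii) the position of the (unique) north non-propagating edge forces $\L(w)=\L(w')=\{s_i\}$, so $w=s_i u$ and $w'=s_i u'$ with $u,u'$ type I (or single generators) and, by (i), $d_i d_u=d_w=d_{w'}=d_i d_{u'}$. The local modification effected by $d_i$ near the nodes $\{i-1,\dots,i+2\}$, applied to any diagram having the structure guaranteed by (iii), is invertible — it is exactly the replacement described in the analysis of the stacking step — so $d_u=d_{u'}$, and by induction $u=u'$, whence $w=w'$. Alternatively, one can simply write $d_w$ out explicitly for each of the families $\z_{i,j},\z_{j,i},\z^{L,2k}_{i,j},\z^{L,2k+1}_{i,j},\z^{R,2k}_{i,j},\z^{R,2k+1}_{i,j}$ of Definition~\ref{def:zigzags} and observe that the positions of the two non-propagating edges, the number of times the propagating strands wind, and the pattern of $\bcirc/\wcirc$ decorations recover the family and the parameters $(i,j,k)$, so distinct type I elements yield distinct diagrams.
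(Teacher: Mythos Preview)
The paper's own proof is the single sentence ``This lemma follows easily from the definition of $\theta$,'' so your induction is really an unpacking of that claim rather than a different route. The structure is sound: peeling off the unique left descent, checking that the stacking step creates no loop and no scalar, and recovering the simple edge at the new top is exactly how one makes the ``direct computation'' explicit.

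Two small points to tighten. In step (d), the relation $\bcirc\,\bcirc=\btri$ produces no scalar; the factor $2$ comes from $\bcirc\,\btri\mapsto 2\,\bcirc$ (equivalently three closed dots along one strand), so your ``i.e.\ two consecutive closed decorations'' is a slip. What you actually need is that the edge leaving node~$1$ in $d_u$ does not begin with a $\btri$, and for a type~I element $u$ with $\L(u)=\{s_2\}$ and $s_1u$ still FC this is forced: otherwise $u$ would begin $s_2s_1s_2\cdots$, putting $s_1s_2s_1s_2$ at the start of $w$. In your inductive argument for (ii), from $d_w=d_{w'}$ and $\L(w)=\L(w')=\{s_i\}$ you obtain $w=s_iu$, $w'=s_iu'$, but you have not yet argued that the zigzags continue in the \emph{same} direction, i.e.\ that $\L(u)=\L(u')$; you would need to read this off from $d_w$ as well (say, from which of the propagating edges at nodes $i-1$ and $i+2$ carries the next fold) before invoking invertibility. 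Your alternative (b) for (ii) --- writing out $d_w$ explicitly for each family in Definition~\ref{def:zigzags} and checking that the endpoints of the two non-propagating edges together with the decoration pattern recover $(i,j,k)$ --- avoids this and is closer in spirit to what the paper intends by ``follows easily.''
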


\begin{proof}
This lemma follows easily from the definition of $\theta$.
\end{proof}

\begin{lemma}\label{lem:image wsrm}
Let $w\in \FC(\C_{n})$ be a non-cancellable element that is not of type I.  Then 
\begin{enumerate}[label=\rm{(\arabic*)}]
\item $\theta(b_{w})=d_{w}$ with $\a(d_{w})=n(w)>1$;
\item If $w'$ is also a non-cancellable element that is not of type I with $w \neq w'$, then $d_{w} \neq d_{w'}$;
\item If $s_{i} \in \L(w)$ (respectively, $\R(w)$), then there is a simple edge joining $i$ to $i+1$ (respectively, $i'$ to $(i+1)'$).
\end{enumerate}
\end{lemma}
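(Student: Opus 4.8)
The plan is to leverage the classification of non-cancellable elements (Theorem~\ref{thm:affineCwsrm}) to handle each type separately, while using the structural information already established in Section~\ref{subsec:prep lemmas} about heaps. Since $w$ is non-cancellable but not of type I, Theorem~\ref{thm:affineCwsrm} tells us that $w$ is either a product $uv$ with $u$ a type $B$ non-cancellable element and $v$ a type $B'$ non-cancellable element having disjoint support, or a type II element (the zigzag elements in (ii) of Theorem~\ref{thm:affineCwsrm} that remain are of type I, so they are excluded by hypothesis). For part (i), I would first compute $n(w)$: for type II elements, $n(w)=\lambda>1$ by the proposition following the definition of type II; for the elements $uv$ of case (i), the disjoint support means the heaps of $u$ and $v$ sit side by side, and one computes $n(w)=n(u)+n(v)$, which exceeds $1$ except in degenerate cases that turn out to be type I. Then I would verify directly from the definition of $\theta$ and the multiplication rule in $\widehat{\P}_{n+2}^{LR}(\Omega)$ that no powers of $2$ or $\delta$ are introduced when forming $d_{i_1}\cdots d_{i_r}$ for a reduced expression of $w$—this amounts to checking that none of the diagrammatic relations of Figure~\ref{Fig083--Fig095} involving scalars get triggered—so that $\theta(b_w)=d_w$ on the nose, and that $\a(d_w)=n(w)$ by tracking how many non-propagating edges the product creates. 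The identity $\a(d_w)=n(w)$ should follow because each application of a generator $d_i$ to a diagram either creates or destroys a non-propagating pair exactly in parallel with how the corresponding Coxeter-group operation affects the maximal antichain size, together with Lemma~\ref{lem:star ops preserve n-value}.

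For part (iii), I would argue that if $s_i\in\L(w)$ then, since $w$ is non-cancellable, the entry labelled $s_i$ at the top of $H(w)$ cannot be ``built upon'' in a way that forces a decoration or a merge: concretely, left-multiplying the concrete simple representation of $d_w$ by reading off the first generator $s_i$ of a suitable reduced expression starting with $s_i$, the edge joining $i$ to $i+1$ in the top copy $d_i$ survives intact as a simple non-propagating edge because nothing in the remaining product reaches up to alter it. The key point is that non-cancellability prevents the configuration that would allow $s_i$'s edge to be absorbed—if that edge were not simple, one could perform a weak star reduction, contradicting non-cancellability (this is essentially the heap reformulation in Remark~\ref{rem:wsrm properties}). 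The right-descent statement is symmetric.

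Part (ii)—injectivity on this set of diagrams—will be the main obstacle. The strategy is to recover $w$ from $d_w$. Using part (iii), the simple edges at the top of $d_w$ read off $\L(w)$, and similarly the bottom reads off $\R(w)$; moreover the decoration data on the remaining edges (triangles versus circles, and their counts) must encode the ``depth'' of the zigzag or the exponent in the alternating product $\x_\O,\x_\E$. I would proceed by cases matching Theorem~\ref{thm:affineCwsrm}: for $uv$ in case (i), the diagram visibly splits into a type-$B$ part on the left and a type-$B'$ part on the right (by Remark~\ref{rem:LR-decorated}, closed decorations stay left of the leftmost propagating edge and open decorations stay right of the rightmost), so faithfulness reduces to the known type-$B$ faithfulness (via Green's $B$-admissible diagrams, which motivated Definition~\ref{def:admissible}) applied independently on each side; for type II elements, the number of propagating edges together with the alternating decoration pattern—which by axioms~\ref{C4} and \ref{C5} is rigidly determined—pins down the exponent in the alternating product uniquely, and comparing with a type I element is impossible since $\a(d_w)>1=\a$ of any type I diagram by Lemma~\ref{lem:image zigzag}(i). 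The delicate part is confirming that distinct type II elements (different exponents, or starting with $\x_\O$ versus $\x_\E$) really do produce distinct admissible diagrams; I expect this to follow by counting the non-propagating edges in the north face against the south face and examining which of $s_1$ or $s_2$ (resp.\ $s_n$, $s_{n+1}$) lies in the descent sets, but making this airtight is where the real work lies and will likely require an explicit description of $d_{\x_\O}$, $d_{\x_\E}$, and their alternating products analogous to the example computation in Figure~\ref{Fig175}.
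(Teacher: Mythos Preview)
Your approach is essentially the same as the paper's: both rest on the classification of non-cancellable elements in Theorem~\ref{thm:affineCwsrm} together with the definition of $\theta$. The paper's proof is a single sentence saying exactly this and leaving the verification to the reader; what you have written is an expanded sketch of that verification.

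One point worth flagging: your structural argument for part~(iii) is slightly off. You argue that if the edge from $i$ to $i+1$ were not simple, one could perform a weak star reduction, contradicting non-cancellability. But non-cancellability is a condition on the Coxeter element $w$, not on the diagram, and weak star reducibility at $(s,t)$ requires $tw\notin\FC$, which is not what a non-simple edge witnesses. The honest argument---and what the paper's one-liner has in mind---is that the classification hands you an explicit, short list of elements (products of commuting generators, $s_1s_2u$ or $s_2s_1u$ with constrained $u$, their $B'$-mirrors, and alternating products of $\x_\O$ and $\x_\E$), each with an evident reduced expression; you simply compute $\theta(b_w)=d_{i_1}\cdots d_{i_r}$ for each and read off all three claims by inspection. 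The same remark applies to your arguments for (i) and (ii): no need to reason abstractly about which relations get triggered or to invoke type-$B$ faithfulness externally---direct computation on the listed forms suffices and is what the paper intends.
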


\begin{proof}
The result follows from the definition of $\theta$ and the classification of the type $\C_n$ non-cancellable elements in~\cite[Theorem 5.1.1]{Ernst2010} together with~\cite[Proposition 3.1.3]{Ernst2010}.
\end{proof}

The upshot of the previous two lemmas is that the image of a monomial indexed by any type I element or any non-cancellable element is a single admissible diagram (i.e., there are no powers of 2 or $\delta$). 

\end{subsection}


\begin{subsection}{Additional preparatory lemmas}\label{subsec:additional prep lemmas}

Our immediate goal is to show that $\theta(b_{w})=d_{w}$ for any $w \in \FC(\C_{n})$ (see Proposition~\ref{prop:monomials map to single diagrams}).  To accomplish this task, we require a few additional lemmas.  We will state an ``if and only if'' version of the following lemma later (see Lemma~\ref{lem:diagram descent set}).

\begin{lemma}\label{lem:simple edge in N face}
Let $w \in \FC(\C_{n})$.  If $s_{i} \in \L(w)$ (respectively, $\R(w)$), then there is a simple edge joining node $i$ to node $i+1$ (respectively, node $i'$ to node $(i+1)'$) in the north (respectively, south) face of $d_{w}$.
\end{lemma}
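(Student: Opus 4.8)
The plan is to exploit the fact that $s_i \in \L(w)$ forces $w$ to have a reduced expression beginning with $s_i$, so that $d_w$ is obtained from a concrete diagram whose topmost factor is the simple diagram $d_i$, and then to check that the simple cap of $d_i$ joining $i$ to $i+1$ is not disturbed by the diagrammatic reductions. Concretely, suppose $s_i \in \L(w)$ and fix a reduced expression $\w = s_{i_1}\cdots s_{i_r}$ for $w$ with $i_1 = i$. By Lemma~\ref{lem:powers of 2 and delta for images of monomials},
\[
\theta(b_w) = d_{i_1}d_{i_2}\cdots d_{i_r} = 2^{k}\delta^{m}d_w
\]
for suitable $k, m \in \Z^{+}\cup\{0\}$, where $d_w$ is obtained from $D$, the concrete simple representation of $d_{\w}$ (the diagram got by stacking fixed concrete representatives of $d_{i_1}, \dots, d_{i_r}$ from top to bottom and rescaling), by conjoining adjacent blocks, applying the local relations of Figure~\ref{Fig083--Fig095}, and discarding loops (picking up factors of $\delta$).

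First I would note that the north face of $D$ agrees with the north face of its topmost factor $d_{i_1} = d_i$. In $d_i$ the node $i$ is joined to the node $i+1$ by a simple non-propagating edge $e$: it is undecorated when $1 < i < n+1$ and carries a single $\bcirc$ (respectively, $\wcirc$) when $i = 1$ (respectively, $i = n+1$). Because $d_i$ occupies only the top sliver of $D$, the edge $e$ lies entirely within that sliver; in particular $e$ does not reach the seam along which $d_{i_1}$ is glued to the remaining factors, and $e$ has both endpoints on north-face nodes, so it is not part of any closed curve.

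The crux---and the step I expect to need the most care---is to verify that $e$ passes unchanged from $D$ to the admissible diagram $d_w$. The relations of Figure~\ref{Fig083--Fig095} are local, each affecting only decorations lying on a single edge; the only decoration $e$ could ever carry is the lone $\bcirc$ or $\wcirc$ contributed by $d_i$ when $i \in \{1, n+1\}$, on which no relation acts. The loop-removal relations only delete closed curves, and $e$ is not closed. Finally, the deformations used to select the canonical admissible representative move edges around but cannot delete the cap $e$ (its endpoints stay pinned to nodes $i$ and $i+1$), nor can they place a new decoration on $e$, since decorations travel only along their own edge and $e$ is never merged with any edge coming from $d_{i_2}\cdots d_{i_r}$ (it never meets the seam). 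Hence $e$ survives verbatim, so $d_w$ has a simple edge joining $i$ to $i+1$ in its north face. The case $s_i \in \R(w)$ is symmetric: choose a reduced expression for $w$ ending in $s_i$, so that $d_i$ is the bottom factor of the corresponding concrete diagram, and the simple cup of $d_i$ joining $i'$ to $(i+1)'$ in the south face survives in $d_w$ by the same reasoning.
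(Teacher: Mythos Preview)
Your proof is correct and follows essentially the same approach as the paper's: factor $w=s_i v$ so that $d_i$ sits on top, and observe that its north-face simple cap is untouched by any subsequent reductions. The only packaging difference is that the paper applies Lemma~\ref{lem:powers of 2 and delta for images of monomials} to $b_v$ first, obtaining $\theta(b_w)=2^k\delta^m\, d_i d_v$ with $d_v$ already admissible, so the survival of the cap is argued in a single product of two diagrams rather than across the full simple representation; your more explicit tracking of the edge through all the local relations accomplishes the same thing.
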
 

\begin{proof}
Assume $s_{i} \in \L(w)$.  Then we can write $w=s_{i}v$ (reduced).  By Lemma~\ref{lem:powers of 2 and delta for images of monomials} applied to $\theta(b_{v})$, we must have
\[
\theta(b_{w})=\theta(b_{i})\theta(b_{v})=2^{k}\delta^{m}d_{i}d_{v}.
\]
This implies that we obtain $d_{w}$ by concatenating $d_{i}$ on top of $d_{v}$.  It follows that we must have a simple edge joining $i$ to $i+1$ in the north face of $d_{w}$.  The proof that $s_{i} \in \R(w)$ implies that there is a simple edge joining $i'$ to $(i+1)'$ in the south face of $d_w$ is similar.
\end{proof}

\begin{lemma}\label{lem:weak star preserve a-value}
Let $w \in \FC(\C_{n})$. If $w$ is left weak star reducible by $s$ with respect to $t$ to $v$, then $\a(d_{w})=\a(d_{v})$.
\end{lemma}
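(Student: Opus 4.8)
The plan is to unwind the definitions and compare the diagrammatic computation of $d_w$ with that of $d_v$, using that weak star reducibility gives a very explicit factorization of $w$. By Remark~\ref{rem:wsrm properties} (and Remark~\ref{rem:monomial weak star reductions}), left weak star reducibility of $w$ by $s$ with respect to $t$ means $w = stv$ (reduced) when $m(s,t)=3$, or $w = stsv$ (reduced) when $m(s,t)=4$, and in either case $v = \star^L_{s,t}(w)$ up to the factor $s$; more precisely $sw = v$ is wrong — rather $v$ is what remains after the reduction, so $w = s\,\widetilde{v}$ reduced where $\widetilde{v}$ itself starts with $t$. The cleanest route is: first observe that $\a(d_w)$ can be computed from any concrete simple representation of $d_w$ once we know no loops and no nontrivial scalars arise, which is guaranteed for $w$ itself only in special cases — so instead I would argue via the $\Z[\delta]$-algebra structure directly.

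First I would recall from Lemma~\ref{lem:powers of 2 and delta for images of monomials} that $\theta(b_w) = 2^{k_1}\delta^{m_1} d_w$ and $\theta(b_v) = 2^{k_2}\delta^{m_2} d_v$. Next, using Lemma~\ref{lem:weak star reverse}, we have $b_s b_t b_w = b_w$ (if $m(s,t)=3$) or $b_s b_t b_w = 2 b_w$ (if $m(s,t)=4$), and on the other side $b_t b_w = b_{tv'}$ or $2b_{tsv'}$ where the resulting element is exactly $v$ in the sense that $b_v$ equals $b_{tv'}$ (resp. $b_{tsv'}$) up to the recorded power of $2$ — here I should be careful to match the naming convention, since the lemma statement calls the reduced element $v$. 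Applying $\theta$ and comparing, $d_s d_t d_w$ equals $d_w$ (times a power of $2$), while $d_t d_w$ is $d_v$ times a power of $2$. The key point is then purely diagrammatic: concatenating the simple diagram $d_t$ onto $d_w$ produces $d_v$ (up to scalar), and concatenating $d_s$ back on returns $d_w$ (up to scalar). Since $d_t$ and $d_s$ are simple diagrams, each of which is ``almost the identity'' — it has $a$-value $1$ and differs from $d_e$ only near two adjacent nodes — I would invoke the explicit description of multiplication by a simple diagram (Proposition 5.5.1 of \cite{Ernst.D:C}, already used above) to track how $\a$ changes: multiplying by $d_t$ can only change the $\a$-value by $0$ or $\pm 1$, and similarly for $d_s$. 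Because applying $d_t$ then $d_s$ returns to $d_w$, the net change is zero; combined with Corollary~\ref{cor:weak. star ops preserve n-value} (which tells us $n(w)=n(v)$, constraining the possibilities), I would conclude $\a(d_w) = \a(d_v)$.

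The main obstacle I anticipate is making the diagrammatic bookkeeping rigorous: showing that when we concatenate the simple diagram $d_t$ onto $d_w$ we genuinely get $d_v$ (and not some other admissible diagram times a scalar), and precisely quantifying the effect on the $\a$-value. This requires knowing that the simple edge structure near nodes $t, t+1$ in the north face of $d_w$ — which is controlled by whether $t \in \L(w)$, and here $t \notin \L(w)$ since $l(tw) > l(w)$ — interacts with $d_t$ in a predictable way, pulling exactly one non-propagating edge configuration through. I would handle this by appealing to Lemma~\ref{lem:simple edge in N face} for $v$ (since $s = s_? \in \L(v)$ forces a simple edge) and Proposition~\ref{prop:surjective homomorphism} together with the injectivity already established on type I and non-cancellable elements, reducing the general claim to the behaviour of a single simple-diagram multiplication, where the $\a$-value changes by at most one in each direction and the round trip forces equality.
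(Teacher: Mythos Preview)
Your round-trip idea via Lemma~\ref{lem:weak star reverse} is a legitimate strategy, but as written it has a gap. You say that multiplying by a simple diagram can change $\a$ by $0$ or $\pm 1$, and that the round trip $d_s d_t d_w = (\text{scalar})\,d_w$ forces the net change to be zero. But this does not rule out $\a(d_v) = \a(d_w) - 1$ followed by $\a(d_w) = \a(d_v) + 1$; you would need to exclude the $-1$ case. Your appeal to Corollary~\ref{cor:weak. star ops preserve n-value} does not help here: that result is about $n$-values, and at this point in the paper there is no established link between $n(w)$ and $\a(d_w)$ (indeed, Lemma~\ref{lem:a=1 implies type I}, which begins to connect them, \emph{uses} the present lemma). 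Also note a slip: it is $t\in\L(v)$, not $s\in\L(v)$, since $v = sw$.

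Your argument can be repaired by proving the monotonicity fact you actually need: left multiplication by any simple diagram $d_j$ never decreases the $\a$-value. A short case analysis on whether nodes $j$ and $j+1$ in the north face of $d$ lie on propagating or non-propagating edges shows $\a(d_j d)\ge \a(d)$ always. Then the round trip gives $\a(d_v)\ge \a(d_w)\ge \a(d_v)$ and you are done. The paper's own proof is more direct and avoids the round trip: since $s=s_i\in\L(w)$, Lemma~\ref{lem:simple edge in N face} gives a simple edge joining $i$ to $i+1$ in the north face of $d_w$; one then inspects the concatenation $d_t d_w$ (with $t=s_{i\pm1}$) at nodes $i,i+1,i+2$ and reads off that the number of non-propagating edges in the north face is unchanged. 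Both approaches ultimately rest on the same local diagrammatic observation; the paper's version just isolates it at the one place it is needed.
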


\begin{proof}
By Remark~\ref{rem:monomial weak star reductions}, we have $b_{t}b_{w}=2^{c}b_{v}$, where $c \in \{0,1\}$ and $c=1$ if and only if $m(s,t)=4$.  This implies that
\[
\theta(b_{t}b_{w})=\theta(2^{c}b_{v})=2^{c}2^{k}\delta^{m}b_{v},
\]
where $k,m \in \Z^{+}\cup\{0\}$.  But, on the other hand, we have
\[
\theta(b_{t}b_{w})=\theta(b_{t})\theta(b_{w})=2^{k'}\delta^{m'}d_{t}d_{w},
\]
where $k',m' \in \Z^{+}\cup\{0\}$.  It follows that $2^{c}2^{k}\delta^{m}d_{v}=2^{k'}\delta^{m'}d_{t}d_{w}$, and so $d_t d_w$ is a scalar multiple of $d_{v}$. Suppose $s=s_{i}$.  By Lemma~\ref{lem:simple edge in N face}, there must be a simple edge joining $i$ to $i+1$ in north face of $d_{w}$. Without loss of generality, suppose $t=s_{i+1}$. Then the edge configuration at nodes $i$, $i+1$, and $i+2$ of $d_{w}$ is depicted in Figure~\ref{fig:weak star preserve a-value}, where $\mathbf{b}=\bcirc$ if and only if $i=1$ and is trivial otherwise.  The edge leaving node $i+2$ may be decorated and may be propagating or non-propagating.  Multiplying $d_{w}$ on the left by $d_{t}=d_{i+1}$ results in a diagram that has the same $\a$-value as $d_{w}$.  Therefore, $\a(d_{v})=\a(d_{w})$, as desired.
\end{proof}

\begin{figure}[!ht]
\begin{tikzpicture}[baseline=-0.5ex,scale=.8]
\draw[gray,thick] (0,1) to (4,1);
\foreach \x in {1,2,3} \filldraw (\x,1) circle (1pt);
\node[above] at (1,1) {\tiny $\phantom{+}i\phantom{+}$};
\node[above] at (2,1) {\tiny $i+1$};
\node[above] at (3,1) {\tiny $i+2$};
\draw[out=-90,in=90,-stealth] (3,1) to (2,0);
\draw[out=-90,in=-90] (1,1) to node[rectangle, inner sep=2.2pt, draw=black, fill=white, pos=0.5]{\tiny $\mathbf{b}$} (2,1);
\end{tikzpicture}
\caption{Portion of the diagram corresponding to the proof of Lemma~\ref{lem:weak star preserve a-value}.}\label{fig:weak star preserve a-value}
\end{figure}
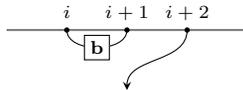

Lemma~\ref{lem:weak star preserve a-value} has an analogous statement involving right weak star reductions.

\begin{lemma}\label{lem:a=1 implies type I}
Let $w \in \FC(\C_{n})$.  Then $\a(d_{w})=1$ if and only if $w$ is of type I.
\end{lemma}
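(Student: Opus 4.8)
The plan is to prove the two implications separately, the forward one being essentially immediate and the converse reducing to the non-cancellable case. For the ``only if'' direction, if $w$ is of type I then Lemma~\ref{lem:image zigzag}(i) directly gives $\theta(b_w)=d_w$ with $\a(d_w)=n(w)=1$, so there is nothing further to do. For the converse I would use Proposition~\ref{prop:zigzags}: since type I elements are characterized by $n(w)=1$, it suffices to show that $\a(d_w)=1$ forces $n(w)=1$.

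So assume $\a(d_w)=1$; note in passing that $w\neq e$, since $\a(d_e)=0$. An element of $\FC(\C_n)$ that fails to be non-cancellable is by definition weak star reducible, and a weak star reduction stays inside $\FC(\C_n)$ while strictly decreasing length; iterating, I obtain a finite chain $w=w_0, w_1,\dots, w_\ell=v$ of weak star reductions terminating at a non-cancellable element $v\in\FC(\C_n)$ (the process terminates because length strictly drops, and length-one elements are already non-cancellable, so in particular $v\neq e$). Along this chain, Lemma~\ref{lem:weak star preserve a-value} together with its right-handed analogue gives $\a(d_{w_i})=\a(d_{w_{i+1}})$ at each step, and Corollary~\ref{cor:weak. star ops preserve n-value} gives $n(w_i)=n(w_{i+1})$; hence $\a(d_v)=\a(d_w)=1$ and $n(v)=n(w)$.

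Now I would rule out $v$ being non-cancellable but not of type I: in that case Lemma~\ref{lem:image wsrm}(i) would force $\a(d_v)=n(v)>1$, contradicting $\a(d_v)=1$. Therefore $v$ is of type I, so $n(v)=1$ by Proposition~\ref{prop:zigzags}, whence $n(w)=n(v)=1$, and applying Proposition~\ref{prop:zigzags} once more shows $w$ is of type I. The only real content of the argument is the already-established invariance of $\a(d_{-})$ under weak star reduction (Lemma~\ref{lem:weak star preserve a-value}) and the fact that every non-cancellable element not of type I has $\a$-value strictly larger than $1$ (Lemma~\ref{lem:image wsrm}, resting on the classification in Theorem~\ref{thm:affineCwsrm}); so the main thing to be careful about is the bookkeeping---checking that the chain of reductions genuinely terminates at a non-cancellable $v$ and that the hypotheses of Lemma~\ref{lem:weak star preserve a-value} and Corollary~\ref{cor:weak. star ops preserve n-value} are met at each step whether the reduction is left- or right-handed.
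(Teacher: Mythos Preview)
Your argument is correct and follows essentially the same route as the paper: reduce $w$ to a non-cancellable element $v$ by a chain of weak star reductions, use Lemma~\ref{lem:weak star preserve a-value} (and its right-handed version) together with Corollary~\ref{cor:weak. star ops preserve n-value} to see that $\a(d_v)=1$ and $n(v)=n(w)$, then invoke Lemma~\ref{lem:image wsrm} to force $v$ of type I and hence $n(w)=1$. The only slip is cosmetic: you have the ``if'' and ``only if'' labels reversed (the direction ``$w$ of type I $\Rightarrow$ $\a(d_w)=1$'' is the \emph{if} part of the biconditional as stated).
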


\begin{proof}
First, suppose $\a(d_{w})=1$.  If $w$ is non-cancellable, then by Lemmas~\ref{lem:image zigzag} and~\ref{lem:image wsrm}, $w$ must be of type I.  On the other hand, assume $w$ is not non-cancellable.  Then there exists a sequence of weak star reductions that reduces $w$ to a non-cancellable element.  By Lemma~\ref{lem:weak star preserve a-value}, each diagram corresponding to the elements of this sequence has the same $\a$-value as $d_{w}$.  Since the sequence of weak star reductions terminates at a non-cancellable element and the diagram corresponding to this element has $\a$-value 1, the non-cancellable element must also have $n$-value 1 by Lemmas~\ref{lem:image zigzag} and~\ref{lem:image wsrm}. Since weak star reductions are a special case of ordinary star reductions, it follows from Lemma~2.9 in~\cite{Shi2005b} that any sequence of weak star reductions applied to $w$ will preserve the $n$-value. It follows that $n(w)=1$, and so $w$ is of type I.  Conversely, if $w$ is of type I, then according to Lemma~\ref{lem:image zigzag}, $\a(d_{w})=1$.
\end{proof}

\begin{lemma}\label{lem:diagram version main zigzag lemma}
Let $w \in \FC(\C_{n})$.  If $1<i < n+1$ such that $H(w)$ has two consecutive occurrences of entries labeled by $s_{i}$ and there is no entry labeled by $s_{i+1}$ occurring between them, then one or both of the following must be true about $d_{w}$:
\begin{enumerate}[label=\rm{(\arabic*)}]
\item The western end of the simple representation of $d_{w}$ is vertically equivalent to the diagram in Figure~\ref{fig:diagram version main zigzag lemma}, where the vertical dashed lines in the diagram indicate that the two curves are part of the same generator $d_{j}$ and the free horizontal arrow indicates a continuation of the pattern of the same shape.  Furthermore, there are no other occurrences of the generators $d_{1}, \dots, d_{i}$ in the simple representation of $d_{w}$;
\item $\a(d_{w})=1$.
\end{enumerate}
\end{lemma}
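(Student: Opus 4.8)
The plan is to deduce the statement from the heap-theoretic Lemma~\ref{lem:main zigzag lemma} and then convert its conclusion into a statement about the simple representation of $d_w$. To begin, I would take $i$ to be the largest index with $1<i<n+1$ having the stated property, exactly as in the proof of Lemma~\ref{lem:main zigzag lemma}. That lemma then supplies a dichotomy: either $w$ is of type I, or the heap in Figure~\ref{Fig056} is a convex subheap of $H(w)$ and no entry labeled by one of $s_1,\dots,s_i$ occurs in $H(w)$ outside of this subheap.

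If $w$ is of type I, then conclusion (ii) is immediate: by Lemma~\ref{lem:a=1 implies type I} we have $\a(d_w)=1$. So this case is finished at once (and note that in the remaining case $w$ is not of type I, so by Lemma~\ref{lem:a=1 implies type I} we have $\a(d_w)>1$ and (ii) genuinely fails, which is why (i) must be established there).

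Assume now $w$ is not of type I. Using the fact that convex subheaps of $H(w)$ are exactly the heaps of subwords of reduced expressions for $w$ (the Proposition in Section~\ref{subsec:heaps}), I would fix a reduced expression $\w = s_{i_1}\cdots s_{i_r}$ for $w$ in which the subword $\z^{L,1}_{i,i}$ realizing the heap of Figure~\ref{Fig056} appears as a consecutive subword; by the second half of the conclusion of Lemma~\ref{lem:main zigzag lemma}, this subword accounts for \emph{all} occurrences of $s_1,\dots,s_i$ in $\w$. Next, I would pass to the concrete simple representation of $d_{\w}$, that is, the stack of the simple diagrams $d_{i_1},\dots,d_{i_r}$. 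The only factors among these whose cups and caps meet the columns $1,\dots,i$ are the ones coming from the $\z^{L,1}_{i,i}$ subword: a factor $d_j$ with $j\ge i+2$ occupies only columns $\ge i+2$ and commutes past all generators of index $\le i$, so it can be isotoped out of the western region, and a factor $d_{i+1}$ interacts with the zigzag only along column $i+1$. Tracing the edges contributed by the stacked diagrams $d_i, d_{i-1},\dots,d_1, d_2,\dots,d_i$ — at each stage the strand of interest is extended rather than closed off, exactly as in the proof of Lemma~\ref{lem:simple edge in N face}, so no relation among the $\bcirc$ decorations is triggered and the single $\bcirc$ carried by $d_1$ survives — one reads off that the western end of the simple representation of $d_w$ is vertically equivalent to the zigzag picture of Figure~\ref{Fig177}. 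Finally, since $\z^{L,1}_{i,i}$ contains every occurrence of $s_1,\dots,s_i$ in $\w$, and the simple representation contains exactly one copy of $d_k$ per occurrence of $s_k$ in $\w$, there are no further occurrences of $d_1,\dots,d_i$ in the simple representation of $d_w$. This is conclusion (i).

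I expect the main obstacle to be this last diagram chase: showing rigorously that the stack $d_i,\dots,d_1,\dots,d_i$ — interleaved with commuting generators of index $\ge i+2$ and possibly with occurrences of $d_{i+1}$ that touch column $i+1$ — really presents its western end in the shape of Figure~\ref{Fig177}. The delicate points are that the index-$\ge i+2$ generators can be slid entirely out of the western columns, that occurrences of $d_{i+1}$ only meet the zigzag along column $i+1$ (and so do not disturb columns $1,\dots,i$), and that no decoration relation fires while the zigzag strands are being built up, so that the picture obtained is precisely Figure~\ref{Fig177} rather than some decorated variant. Everything else is a direct appeal to Lemmas~\ref{lem:main zigzag lemma} and~\ref{lem:a=1 implies type I} together with elementary bookkeeping of which simple diagrams occur in $\w$.
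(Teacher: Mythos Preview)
Your proposal is correct and takes essentially the same approach as the paper: invoke Lemma~\ref{lem:main zigzag lemma} for the dichotomy, handle the type~I branch by reading off $\a(d_w)=1$, and in the remaining branch translate the convex subheap $\z^{L,1}_{i,i}$ (which accounts for all occurrences of $s_1,\dots,s_i$) into the western-end picture of Figure~\ref{Fig177}. The paper's proof is a one-liner that abbreviates your diagram chase as ``applying $\theta$ to the monomial indexed by the type~I element $\z^{L,1}_{i,i}$''; the only cosmetic difference is that you cite Lemma~\ref{lem:a=1 implies type I} for the type~I branch whereas the paper implicitly relies on Lemma~\ref{lem:image zigzag}, and either is fine with no circularity.
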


\begin{figure}[!ht]
\begin{tikzpicture}[baseline=-0.5ex,scale=.75]
\draw[gray,thick] (9,-8.25) to (0,-8.25) to (0,8.25) to (9,8.25);
\foreach \x in {1,2,5,6,7,8} \filldraw (\x,8.25) circle (1pt);
\foreach \x in {1,2,5,6,7,8} \filldraw (\x,-8.25) circle (1pt);
\draw[out=-90,in=-90] (1,.75) to node[blackcirc, pos=0.5](A){} (2,.75);
\draw[out=90,in=90] (1,-.75) to node[blackcirc, pos=0.5](B){} (2,-.75);

\draw[out=90,in=90] (2,.75) to node[inner sep=0pt](C){} (3,.75);
\draw[out=-90,in=-90] (2,2.25) to node[inner sep=0pt](D){} (3,2.25);

\draw[out=-90,in=-90] (2,-.75) to node[inner sep=0pt](E){} (3,-.75);
\draw[out=90,in=90] (2,-2.25) to node[inner sep=0pt](F){} (3,-2.25);

\draw[out=90,in=90] (3,2.25) to node[inner sep=0pt](G){} (4,2.25);
\draw[out=-90,in=-90] (3,-2.25) to node[inner sep=0pt](H){} (4,-2.25);

\draw[out=90,in=90] (5,-3.75) to node[inner sep=0pt](I){} (6,-3.75);
\draw[out=-90,in=-90] (5,3.75) to node[inner sep=0pt](J){} (6,3.75);

\draw[out=90,in=90] (6,3.75) to node[inner sep=0pt](L){} (7,3.75);
\draw[out=-90,in=-90] (6,5.25) to node[inner sep=0pt](K){} (7,5.25);

\draw[out=90,in=90] (7,5.25) to node[inner sep=0pt](M){} (8,5.25);
\draw[out=-90,in=-90,-stealth] (7,6.75) to node[inner sep=0pt](N){} (8,6.75);

\draw[out=-90,in=-90] (6,-3.75) to node[inner sep=0pt](O){} (7,-3.75);
\draw[out=90,in=90] (6,-5.25) to node[inner sep=0pt](P){} (7,-5.25);

\draw[out=-90,in=-90] (7,-5.25) to node[inner sep=0pt](Q){} (8,-5.25);
\draw[out=90,in=90,-stealth] (7,-6.75) to node[inner sep=0pt](R){} (8,-6.75);

\node at (5,0) {$\longrightarrow$};
\node[above] at (3.5,8.25) {\tiny $\cdots$};

\node[above] at (1,8.25) {\tiny $\phantom{+}1\phantom{+}$};
\node[above] at (2,8.25) {\tiny $\phantom{+}2\phantom{+}$};
\node[above] at (5,8.25) {\tiny $\phantom{+}i-2\phantom{+}$};
\node[above] at (6,8.25) {\tiny $\phantom{+}i-1\phantom{+}$};
\node[above] at (7,8.25) {\tiny $\phantom{+}i\phantom{+}$};
\node[above] at (8,8.25) {\tiny $\phantom{+}i+1\phantom{+}$};

\draw (3,.75) to (3,-.75);
\draw (4,2.25) to (4,-2.25);
\draw (1,.75) to (1,8.25);
\draw (1,-.75) to (1,-8.25);
\draw (2,2.25) to (2,8.25);
\draw (2,-2.25) to (2,-8.25);
\draw (5,3.75) to (5,8.25);
\draw (5,-3.75) to (5,-8.25);
\draw (6,5.25) to (6,8.25);
\draw (6,-5.25) to (6,-8.25);
\draw (7,6.75) to (7,8.25);
\draw (7,-6.75) to (7,-8.25);
\draw (7,-3.75) to (7,3.75);
\draw (8,-5.25) to (8,5.25);

\draw[densely dashed] (A) -- (B);
\draw[densely dashed] (C) -- (D);
\draw[densely dashed] (E) -- (F);
\draw[densely dashed] (K) -- (L);
\draw[densely dashed] (M) -- (N);
\draw[densely dashed] (O) -- (P);
\draw[densely dashed] (Q) -- (R);
\end{tikzpicture}
\caption{Portion of the diagram corresponding to Lemma~\ref{lem:diagram version main zigzag lemma}.}\label{fig:diagram version main zigzag lemma}
\end{figure}
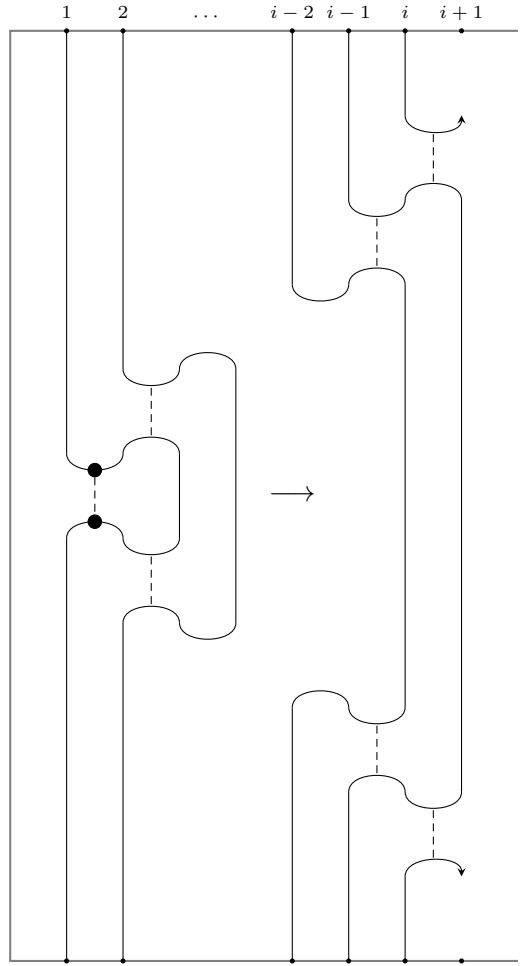

\begin{proof}
This follows immediately from the definition of the map $\theta$ together with Lemma~\ref{lem:main zigzag lemma}.
\end{proof}

\begin{lemma}\label{lem:direction change}
If $w \in \FC(\C_{n})$, then the only way that an edge of the simple representation of $d_{w}$ may change direction from right to left is if a convex subset of the simple representation of $d_{w}$ is vertically equivalent to one of the diagrams in Figure~\ref{fig:direction change}, where the vertical dashed lines in the diagram indicate that the two curves are part of the same generator $d_{j}$ and the arrows indicate a continuation of the pattern of the same shape.
\end{lemma}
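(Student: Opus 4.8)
The statement is essentially a structural claim about how edges of the simple representation of $d_w$ can behave: an edge can only reverse horizontal direction (from travelling rightward to travelling leftward, as one reads the stacked simple diagrams from top to bottom) at a ``turnaround'' that looks like one of the pictured local configurations in Figure~\ref{Fig178-Fig180}. My plan is to translate this into a statement about heaps and reduced words in $\FC(\C_n)$, prove it there using the combinatorial lemmas of Section~\ref{subsec:prep lemmas}, and then transport it back through $\theta$. The key dictionary is this: in the concrete simple representation obtained by stacking $d_{i_1},\dots,d_{i_r}$ for a reduced expression $\w = s_{i_1}\cdots s_{i_r}$, each propagating strand traces a path whose $x$-coordinate changes by $\pm 1$ exactly when it passes through a simple diagram $d_j$ whose ``cup/cap'' pair it straddles, and by Lemma~\ref{lem:image zigzag} and Lemma~\ref{lem:image wsrm} (and more generally Lemma~\ref{lem:powers of 2 and delta for images of monomials}) the combinatorics of these strands is governed precisely by adjacencies in the heap $H(w)$.

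**Key steps, in order.** First I would set up the correspondence: a strand in the concrete simple representation travelling rightward and then leftward must pass through some generator $d_j$ from above and then, at a strictly lower level, pass through $d_{j}$ again or through $d_{j-1}$ in the reversing sense; unwinding the definitions of the simple diagrams $d_1,\dots,d_{n+1}$ (Figure~\ref{Fig107--Fig109}), this forces a local pattern in the stacking order of the $d_{i_k}$. Second, I would observe that a direction change corresponds, on the heap side, to having two consecutive occurrences of a generator $s_j$ (or an adjacent pair $s_j$, $s_{j+1}$ configured as a ``peak'') with no copy of the neighbour that would carry the strand further in the original direction between them — exactly the hypothesis of Lemma~\ref{lem:main zigzag lemma} and Lemma~\ref{lem:zigzag subword}. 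Third, I would invoke Lemma~\ref{lem:main zigzag lemma} (via its diagram incarnation, Lemma~\ref{lem:diagram version main zigzag lemma}) together with its northeast/southeast/southwest analogues (the Remark following it) to conclude that the only ways such a peak can arise are (a) $w$ is of type I, forcing $\a(d_w)=1$ and the zigzag shape of Figure~\ref{Fig177} and its mirrors, giving the configurations in Figure~\ref{Fig178-Fig180}; or (b) the peak sits at the very ``top'' or ``bottom'' of a column $s_j$ that is already turning because of an impermissible-heap constraint from Lemma~\ref{lem:impermissible heap configs}, which again pins down the local picture. Fourth, I would check the boundary cases $j=1$, $j=n+1$ separately, since there $m(s,t)=4$ and the relevant local reductions come from relation (iv) of Theorem~\ref{thm:affine C relations} rather than (iii); here Lemma~\ref{lem:zigzag} and Lemma~\ref{lem:typeI or nothing above} (with its corner versions) supply exactly the needed extra configurations, which is why Figure~\ref{Fig178-Fig180} has more than one diagram.

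**Main obstacle.** The real work is bookkeeping in the passage between the ``global'' geometric statement about a single strand reversing direction and the ``local'' heap statement about consecutive occurrences of a generator: a strand can wander far horizontally before reversing, so I must argue that the \emph{first} reversal along the strand, read top-to-bottom, occurs at a place where the hypotheses of Lemma~\ref{lem:main zigzag lemma} are literally met — i.e. that between the relevant pair of occurrences of $s_j$ there is genuinely no occurrence of $s_{j+1}$ (or $s_{j-1}$), not merely none ``visible'' in some truncation. This is where the FC hypothesis does the heavy lifting: convexity of subheaps (Proposition before Lemma~\ref{lem:impermissible heap configs}) guarantees that the local picture around the reversal is a genuine convex subheap, so the impermissible-configuration lemma applies without caveats, and the maximal-index choices in the proofs of Lemma~\ref{lem:zigzag subword} and Lemma~\ref{lem:main zigzag lemma} can be reused almost verbatim. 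Once that reduction is in place, enumerating the finitely many resulting convex subheaps and matching each to a diagram in Figure~\ref{Fig178-Fig180} is routine, and the whole argument reduces to one more application of $\theta$ exactly as in the proof of Lemma~\ref{lem:diagram version main zigzag lemma}.
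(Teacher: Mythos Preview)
Your core idea is right and matches the paper: a right-to-left direction change below node $i+1$ in the simple representation means precisely that there are two consecutive occurrences of $d_i$ with no $d_{i+1}$ between them, and this is the hypothesis of Lemma~\ref{lem:diagram version main zigzag lemma}. But your case analysis is muddled, and the ``main obstacle'' you worry about is not an obstacle at all --- the correspondence between a direction reversal and two consecutive $d_i$'s is immediate from the shape of the simple diagrams, with no bookkeeping required.

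The paper's proof is a short paragraph with the following clean case split. First, the observation that $i>1$ is forced (there is nothing to the left of $d_1$), so your boundary case $j=1$ does not arise. If $1<i<n+1$, Lemma~\ref{lem:diagram version main zigzag lemma} applies directly and yields Figure~\ref{Fig179}. If $i=n+1$, then between the two consecutive $d_{n+1}$'s there is either exactly one $d_n$ --- giving the subword $d_{n+1}d_n d_{n+1}$ and hence Figure~\ref{Fig178} --- or exactly two $d_n$'s (three or more would contradict Lemma~\ref{lem:main zigzag lemma}), in which case Lemma~\ref{lem:diagram version main zigzag lemma} applied to those two $d_n$'s yields Figure~\ref{Fig180}. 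That is the whole argument. Your step~3 misidentifies the dichotomy: the three figures are \emph{not} all type~I configurations versus something else; rather, they are indexed by the value of $i$ and, in the $i=n+1$ case, by how many copies of $d_n$ sit between the two $d_{n+1}$'s. You never isolate the $d_{n+1}d_n d_{n+1}$ pattern (Figure~\ref{Fig178}), which is the one genuinely new case not already covered by Lemma~\ref{lem:diagram version main zigzag lemma}, and your appeal to Lemma~\ref{lem:typeI or nothing above} and corner versions is unnecessary here.
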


\begin{figure}[!ht]
\subcaptionbox{\label{fig:direction change 1}}[.49\linewidth]{
\begin{tikzpicture}[baseline=-0.5ex,scale=.75]
\draw[gray,thick] (0,-6.75) to (0,6.75);
\draw[gray,thick,dashed] (9,-6.75) to (0,-6.75);
\draw[gray,thick,dashed] (0,6.75) to (9,6.75);
\draw[out=-90,in=-90] (1,.75) to node[blackcirc, pos=0.5](A){} (2,.75);
\draw[out=90,in=90] (1,-.75) to node[blackcirc, pos=0.5](B){} (2,-.75);

\draw[out=90,in=90] (2,.75) to node[inner sep=0pt](C){} (3,.75);
\draw[out=-90,in=-90] (2,2.25) to node[inner sep=0pt](D){} (3,2.25);

\draw[out=-90,in=-90] (2,-.75) to node[inner sep=0pt](E){} (3,-.75);
\draw[out=90,in=90] (2,-2.25) to node[inner sep=0pt](F){} (3,-2.25);

\draw[out=90,in=90] (3,2.25) to node[inner sep=0pt](G){} (4,2.25);
\draw[out=-90,in=-90] (3,-2.25) to node[inner sep=0pt](H){} (4,-2.25);

\draw[out=90,in=90] (5,-3.75) to node[inner sep=0pt](I){} (6,-3.75);
\draw[out=-90,in=-90] (5,3.75) to node[inner sep=0pt](J){} (6,3.75);

\draw[out=90,in=90] (6,3.75) to node[inner sep=0pt](L){} (7,3.75);
\draw[out=-90,in=-90] (6,5.25) to node[inner sep=0pt](K){} (7,5.25);

\draw[out=90,in=90] (7,5.25) to node[inner sep=0pt](M){} (8,5.25);
\draw[out=-90,in=-90] (7,6.75) to node[inner sep=0pt](N){} (8,6.75);

\draw[out=-90,in=-90] (6,-3.75) to node[inner sep=0pt](O){} (7,-3.75);
\draw[out=90,in=90] (6,-5.25) to node[inner sep=0pt](P){} (7,-5.25);

\draw[out=-90,in=-90] (7,-5.25) to node[inner sep=0pt](Q){} (8,-5.25);
\draw[out=90,in=90] (7,-6.75) to node[inner sep=0pt](R){} (8,-6.75);

\node at (5,0) {$\longrightarrow$};
\node[above] at (3.5,6.75) {\tiny $\cdots$};

\node[above] at (1,6.75) {\tiny $\phantom{+}1\phantom{+}$};
\node[above] at (2,6.75) {\tiny $\phantom{+}2\phantom{+}$};
\node[above] at (5,6.75) {\tiny $\phantom{+}i-2\phantom{+}$};
\node[above] at (6,6.75) {\tiny $\phantom{+}i-1\phantom{+}$};
\node[above] at (7,6.75) {\tiny $\phantom{+}i\phantom{+}$};
\node[above] at (8,6.75) {\tiny $\phantom{+}i+1\phantom{+}$};

\draw (3,.75) to (3,-.75);
\draw (4,2.25) to (4,-2.25);
\draw (1,.75) to (1,6.75);
\draw (1,-.75) to (1,-6.75);
\draw (2,2.25) to (2,6.75);
\draw (2,-2.25) to (2,-6.75);
\draw (5,3.75) to (5,6.75);
\draw (5,-3.75) to (5,-6.75);
\draw (6,5.25) to (6,6.75);
\draw (6,-5.25) to (6,-6.75);
\draw (7,6.75) to (7,6.75);
\draw (7,-6.75) to (7,-6.75);
\draw (7,-3.75) to (7,3.75);
\draw (8,-5.25) to (8,5.25);

\draw[densely dashed] (A) -- (B);
\draw[densely dashed] (C) -- (D);
\draw[densely dashed] (E) -- (F);
\draw[densely dashed] (K) -- (L);
\draw[densely dashed] (M) -- (N);
\draw[densely dashed] (O) -- (P);
\draw[densely dashed] (Q) -- (R);
\end{tikzpicture}}
\subcaptionbox{\label{fig:direction change 2}}[.49\linewidth]{
\begin{tikzpicture}[baseline=-0.5ex,scale=.75]
\draw[gray,thick] (0,-6.75) to (0,6.75);
\draw[gray,thick] (9,-6.75) to (9,6.75);
\draw[gray,thick,dashed] (9,-6.75) to (0,-6.75);
\draw[gray,thick,dashed] (0,6.75) to (9,6.75);
\draw[out=-90,in=-90] (1,.75) to node[blackcirc, pos=0.5](A){} (2,.75);
\draw[out=90,in=90] (1,-.75) to node[blackcirc, pos=0.5](B){} (2,-.75);

\draw[out=90,in=90] (2,.75) to node[inner sep=0pt](C){} (3,.75);
\draw[out=-90,in=-90] (2,2.25) to node[inner sep=0pt](D){} (3,2.25);

\draw[out=-90,in=-90] (2,-.75) to node[inner sep=0pt](E){} (3,-.75);
\draw[out=90,in=90] (2,-2.25) to node[inner sep=0pt](F){} (3,-2.25);

\draw[out=90,in=90] (3,2.25) to node[inner sep=0pt](G){} (4,2.25);
\draw[out=-90,in=-90] (3,-2.25) to node[inner sep=0pt](H){} (4,-2.25);

\draw[out=90,in=90] (5,-3.75) to node[inner sep=0pt](I){} (6,-3.75);
\draw[out=-90,in=-90] (5,3.75) to node[inner sep=0pt](J){} (6,3.75);

\draw[out=90,in=90] (6,3.75) to node[inner sep=0pt](L){} (7,3.75);
\draw[out=-90,in=-90] (6,5.25) to node[inner sep=0pt](K){} (7,5.25);

\draw[out=90,in=90] (7,5.25) to node[whitecirc, pos=0.5](M){} (8,5.25);
\draw[out=-90,in=-90] (7,6.75) to node[whitecirc, pos=0.5](N){} (8,6.75);

\draw[out=-90,in=-90] (6,-3.75) to node[inner sep=0pt](O){} (7,-3.75);
\draw[out=90,in=90] (6,-5.25) to node[inner sep=0pt](P){} (7,-5.25);

\draw[out=-90,in=-90] (7,-5.25) to node[whitecirc, pos=0.5](Q){} (8,-5.25);
\draw[out=90,in=90] (7,-6.75) to node[whitecirc, pos=0.5](R){} (8,-6.75);

\node at (5,0) {$\longrightarrow$};
\node[above] at (3.5,6.75) {\tiny $\cdots$};

\node[above] at (1,6.75) {\tiny $\phantom{+}1\phantom{+}$};
\node[above] at (2,6.75) {\tiny $\phantom{+}2\phantom{+}$};
\node[above] at (5,6.75) {\tiny $\phantom{+}n-1\phantom{+}$};
\node[above] at (6,6.75) {\tiny $\phantom{+}n\phantom{+}$};
\node[above] at (7,6.75) {\tiny $\phantom{+}n+1\phantom{+}$};
\node[above] at (8,6.75) {\tiny $\phantom{+}n+2\phantom{+}$};

\draw (3,.75) to (3,-.75);
\draw (4,2.25) to (4,-2.25);
\draw (1,.75) to (1,6.75);
\draw (1,-.75) to (1,-6.75);
\draw (2,2.25) to (2,6.75);
\draw (2,-2.25) to (2,-6.75);
\draw (5,3.75) to (5,6.75);
\draw (5,-3.75) to (5,-6.75);
\draw (6,5.25) to (6,6.75);
\draw (6,-5.25) to (6,-6.75);
\draw (7,6.75) to (7,6.75);
\draw (7,-6.75) to (7,-6.75);
\draw (7,-3.75) to (7,3.75);
\draw (8,-5.25) to (8,5.25);

\draw[densely dashed] (A) -- (B);
\draw[densely dashed] (C) -- (D);
\draw[densely dashed] (E) -- (F);
\draw[densely dashed] (K) -- (L);
\draw[densely dashed] (M) -- (N);
\draw[densely dashed] (O) -- (P);
\draw[densely dashed] (Q) -- (R);
\end{tikzpicture}}\\
\vspace{1em}
\subcaptionbox{\label{fig:direction change 3}}{
\begin{tikzpicture}[scale=.75]
\draw[gray,thick] (9,2.75) to (9,6.75) to (5,6.75);
\foreach \x in {6,7,8} \filldraw (\x,6.75) circle (1pt);
\draw[out=90,in=90] (6,3.75) to node[inner sep=0pt](L){} (7,3.75);
\draw[out=-90,in=-90] (6,5.25) to node[inner sep=0pt](K){} (7,5.25);
\draw[out=90,in=90] (7,5.25) to node[whitecirc, pos=0.5](M){} (8,5.25);
\draw[out=-90,in=-90] (7,3.75) to node[whitecirc, pos=0.5](Q){} (8,3.75);
\node[above] at (6,6.75) {\tiny $\phantom{+}n\phantom{+}$};
\node[above] at (7,6.75) {\tiny $\phantom{+}n+1\phantom{+}$};
\node[above] at (8,6.75) {\tiny $\phantom{+}n+2\phantom{+}$};
\draw (8,3.75) to (8,5.25);
\draw[densely dashed] (K) -- (L);
\end{tikzpicture}}
\caption{Portions of the diagrams corresponding to Lemma~\ref{lem:direction change}.}\label{fig:direction change}
\end{figure}
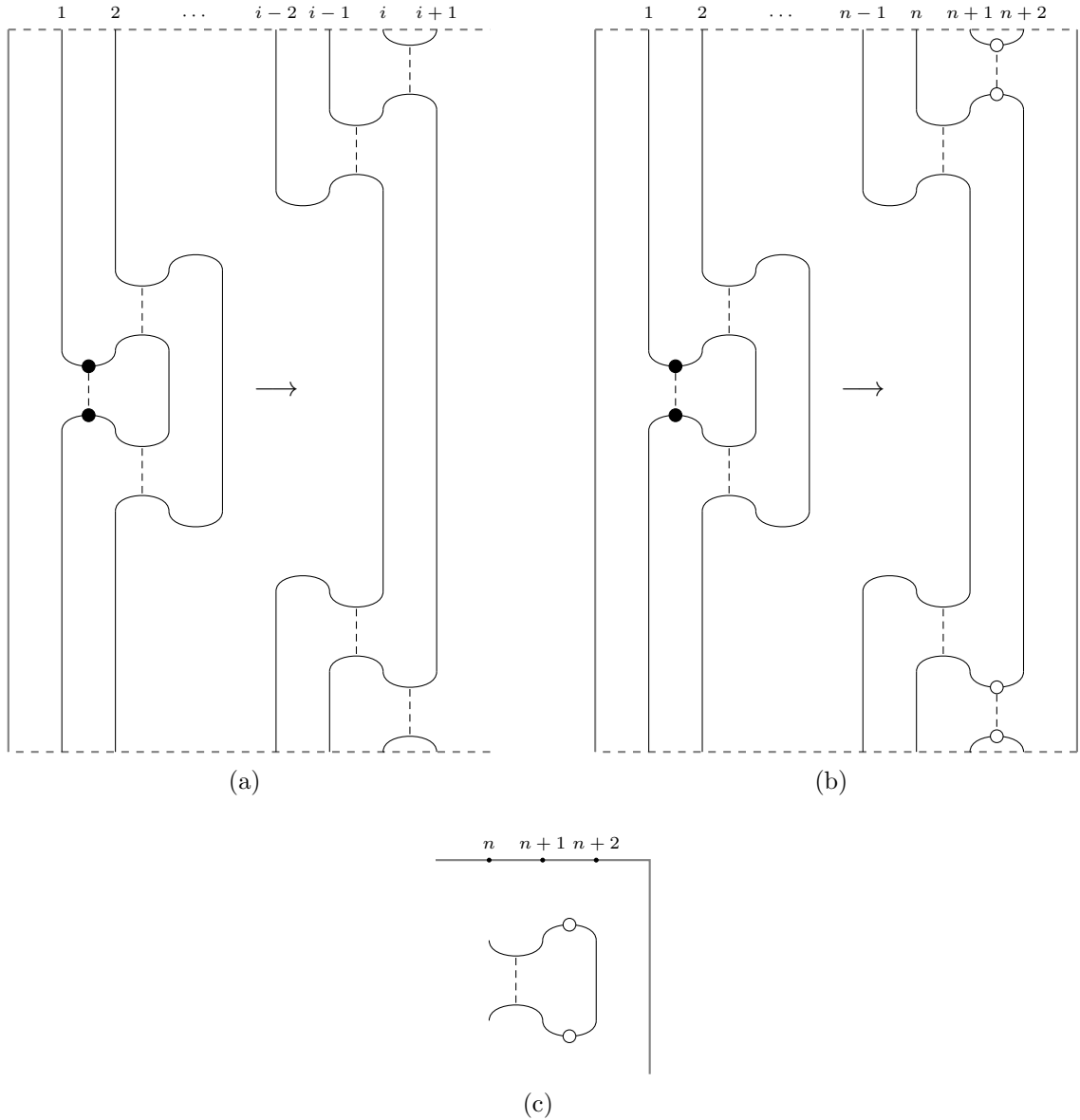

\begin{proof}
An edge changing direction from right to left directly below node $i+1$ indicates that there are two consecutive occurrences of the simple diagram $d_{i}$ not having an occurrence of $d_{i+1}$ between them. Note that this forces $i>1$.  If $i<n+1$, then by Lemma~\ref{lem:diagram version main zigzag lemma}, we must have the diagram in Figure~\ref{fig:direction change 1}.  On the other hand, suppose $i=n+1$.  In this case, there are at most two occurrences of $d_{n}$ occurring between the two consecutive occurrences of $d_{n+1}$ by Lemma~\ref{lem:main zigzag lemma}. If there are two occurrences of $d_{n}$, then applying Lemma~\ref{lem:diagram version main zigzag lemma} to the two consecutive occurrences of $d_{n}$ forces us to have the diagram in Figure~\ref{fig:direction change 2}.  Finally, if there is a single occurrence of $d_{n}$ between the two consecutive occurrences of $d_{n+1}$, then we must have the diagram in Figure~\ref{fig:direction change 3}.
\end{proof}

\begin{remark}\label{rem:direction change}
If the diagram in Figure~\ref{fig:direction change 2} occurs, then $w$ must be of type I by Lemma~\ref{lem:zigzag}.  
\end{remark}

\begin{lemma}\label{lem:diagram descent set}
Let $w \in \FC(\C_{n})$.  Then $s_{i} \in \L(w)$ (respectively, $\R(w)$) if and only if there is a simple edge joining $i$ to $i+1$ (respectively, $i'$ to $(i+1)'$) in $d_{w}$.
\end{lemma}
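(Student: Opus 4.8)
The forward implication is immediate from Lemma~\ref{lem:simple edge in N face}, so I would concentrate on the converse, and I would first reduce the $\R(w)$/south-face statement to the $\L(w)$/north-face statement by the vertical-reflection symmetry: each simple diagram $d_i$ is invariant under interchanging its north and south faces, reflection reverses the order of a concatenation, and $w\mapsto w^{-1}$ carries $\FC(\C_n)$ to itself and $\L$ to $\R$; hence $d_{w^{-1}}$ is the reflection of $d_w$, and a simple edge joining $i'$ to $(i+1)'$ in $d_w$ is the same thing as a simple edge joining $i$ to $i+1$ in $d_{w^{-1}}$. So the task becomes: if $d_w$ has a simple edge joining $i$ to $i+1$ in its north face, then $s_i\in\L(w)$.

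I would prove this by induction on $l(w)$. The case $l(w)=0$ is vacuous, since the identity diagram has only vertical edges. For the inductive step I would fix a reduced expression $\w=s_{i_1}\cdots s_{i_r}$ and set $v=s_{i_2}\cdots s_{i_r}$, so that $v\in\FC(\C_n)$, $w=s_{i_1}v$ is reduced, and $l(v)<l(w)$; from $\theta(b_w)=\theta(b_{i_1})\theta(b_v)$ together with Lemma~\ref{lem:powers of 2 and delta for images of monomials} and Theorem~\ref{thm:admissibles basis for D_n}, the admissible diagram $d_w$ is exactly the diagram obtained by concatenating $d_{i_1}$ on top of $d_v$ and reducing (the resulting powers of $2$ and $\delta$ being irrelevant here). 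Then I would split on $|i_1-i|$. If $i_1=i$, the north-face cup of $d_{i_1}=d_i$ joining $i$ to $i+1$ lies entirely inside the $d_{i_1}$-box and survives concatenation untouched, and we directly have $s_i=s_{i_1}\in\L(w)$. If $i_1=i\pm1$, then in the concrete simple representation the nodes $i$ and $i+1$ of the north face lie on two distinct edges of $d_{i_1}$ (one of $i,i+1$ is an endpoint of the north-face cup of $d_{i_1}$, whose other endpoint is a different column), and since the relations of Figure~\ref{Fig083--Fig095} act locally on a single edge and never fuse distinct edges, $d_w$ has no edge at all joining $i$ to $i+1$ in its north face --- contradicting the hypothesis, so this case does not arise. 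Finally, if $|i_1-i|\ge2$, then $d_{i_1}$ has undecorated vertical edges at both columns $i$ and $i+1$ (any decorations of $d_{i_1}$ sit at columns $\{1,2\}$ or $\{n+1,n+2\}$, which are disjoint from $\{i,i+1\}$ precisely because $|i_1-i|\ge2$), so stacking $d_{i_1}$ on top of $d_v$ only prolongs the edges at columns $i$ and $i+1$ by undecorated vertical segments; this changes neither whether those two columns are joined by a cup in the north face nor the decorations such a cup carries. Hence $d_v$ too has a simple edge joining $i$ to $i+1$ in its north face, so $s_i\in\L(v)$ by induction, and since $m(s_{i_1},s_i)=2$ I can commute $s_i$ to the front to get $s_i\in\L(s_{i_1}v)=\L(w)$.

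The step I expect to be the main obstacle is the diagrammatic bookkeeping in the last two cases: one needs to be certain that reducing $d_{i_1}\cdot d_v$ to an admissible diagram via the relations of Figure~\ref{Fig083--Fig095} can neither merge the edges incident to nodes $i$ and $i+1$ nor move a decoration onto or off of the cup at columns $i,i+1$ in the north face. Both facts follow from the locality of those relations, but I would state them explicitly rather than leaving them implicit. The remaining ingredients --- the identification $d_w=d_{i_1}\cdot d_v$ up to scalars, the reflection symmetry handling $\R(w)$, and the commutation move --- are routine.
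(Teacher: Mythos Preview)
Your argument is correct and takes a genuinely different route from the paper's. The paper proves the converse by contradiction, working inside the \emph{simple representation} of $d_w$: if $s_i\notin\L(w)$ then the edge $e$ joining $i$ to $i+1$ must change direction before it first meets a copy of $d_i$, and the classification of direction changes (Lemma~\ref{lem:direction change}, resting on Lemmas~\ref{lem:main zigzag lemma} and~\ref{lem:diagram version main zigzag lemma}) together with the type~I dichotomy (Lemma~\ref{lem:a=1 implies type I}) is then used to eliminate every configuration, with separate case analyses for $i\notin\{1,2,n,n+1\}$, $i\in\{2,n\}$, and $i\in\{1,n+1\}$. Your approach instead peels off the leftmost simple factor $d_{i_1}$ and inducts on $l(w)$, using only that the north face of $d_{i_1}\cdot d_v$ at columns $i,i+1$ is determined by $d_{i_1}$ alone (when $i_1\in\{i-1,i,i+1\}$) or is transparently inherited from $d_v$ (when $|i_1-i|\ge2$). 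This bypasses the entire heap/direction-change machinery of Lemmas~\ref{lem:all filled in}--\ref{lem:main zigzag lemma} and~\ref{lem:a=1 implies type I}--\ref{lem:direction change} and is markedly more elementary; the trade-off is that those lemmas are reused later in the paper (notably in Proposition~\ref{prop:monomials map to single diagrams} and Theorem~\ref{thm:main result}), so from the paper's global perspective they are not wasted effort. Your flagged ``obstacle'' is, as you suspect, routine: the relations of Figure~\ref{Fig083--Fig095} act on a single edge and never fuse distinct edges, so in the $i_1=i\pm1$ case the north-face cup of $d_{i_1}$ persists unchanged as an edge of $d_w$, and in the $|i_1-i|\ge2$ case the undecorated verticals at columns $i,i+1$ contribute nothing, so simplicity of the cup transfers between $d_w$ and $d_v$ exactly as claimed.
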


\begin{proof}
The forward direction is Lemma~\ref{lem:simple edge in N face}. For the converse, suppose there is a simple edge joining node $i$ to node $i+1$ in the north face of $d_{w}$.  We need to show that $s_{i} \in \L(w)$.  By Lemma~\ref{lem:a=1 implies type I}, if $\a(d)=1$, then $w$ is of type I.  In this case, $s_{i} \in \L(w)$ by Lemma~\ref{lem:image zigzag}.  Now, suppose $\a(d)>1$.  Consider the simple representation for $d_{w}$ and let $e$ be the edge joining $i$ to $i+1$.  For sake of a contradiction, suppose  that $s_{i} \notin \L(w)$.  Then either 
\begin{enumerate}[label=\rm{(\alph*)}]
\item It is not the case that the end of $e$ leaving node $i+1$ encounters the northernmost occurrence of $d_{i}$ before any other generator; or
\item It is not the case that the end of $e$ leaving node $i$ encounters the northernmost occurrence of $d_{i}$ before any other generator.
\end{enumerate}
Note that since $e$ crosses the midline between nodes $i$ and $i+1$, it must encounter $d_{i}$ at some stage.  We consider three distinct cases.

Case~(1).  Assume $i \notin \{1, 2, n, n+1\}$.  Then $e$ is undecorated.  We deal with Condition~(a) from above; Condition~(b) has a similar argument.  Since the curve must eventually encounter $d_{i}$, the edge $e$ must change direction from right to left.  Then we must be in one of the three situations of Lemma~\ref{lem:direction change}.  But since $e$ is undecorated and we are assuming that $\a(d_{w})>1$, by Remark~\ref{rem:direction change}, Figure~\ref{fig:diagram descent set 2} depicts the only possibility for the edge leaving node $i+1$ in the simple representation for $d_{w}$, where $\mathbf{b}=\wcirc$ if $i=n-1$ and is trivial otherwise. Then it must be the case that $s_{i+2}$ does not occur between two consecutive occurrences of $s_{i+1}$ in reduced expressions for $w$.  By Lemma~\ref{lem:diagram version main zigzag lemma}, $d_{i}$ cannot occur again in $d_{w}$, which contradicts the edge $e$ joining node $i$ to node $i+1$.

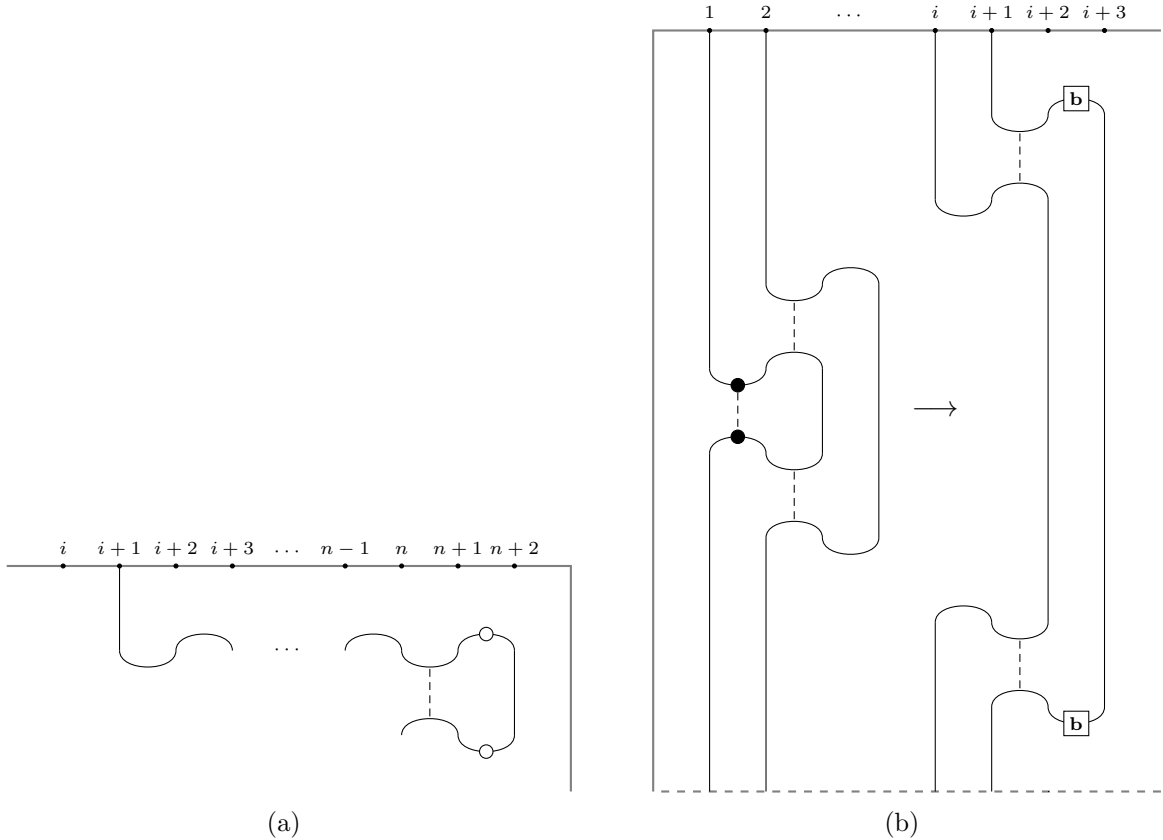
\begin{figure}[!ht]
\centering
\subcaptionbox{\label{fig:diagram descent set 1}}[.49\linewidth]{
\begin{tikzpicture}[scale=.75]
\draw[gray,thick] (9,2.75) to (9,6.75) to (-1,6.75);
\foreach \x in {0,1,2,3,5,6,7,8} \filldraw (\x,6.75) circle (1pt);
\draw[out=90,in=90] (6,3.75) to node[inner sep=0pt](A){} (7,3.75);
\draw[out=-90,in=-90] (6,5.25) to node[inner sep=0pt](B){} (7,5.25);
\draw[out=-90,in=-90] (1,5.25) to node[inner sep=0pt](G){} (2,5.25);
\draw[out=90,in=90] (5,5.25) to node[inner sep=0pt](E){} (6,5.25);
\draw[out=90,in=90] (2,5.25) to node[inner sep=0pt](F){} (3,5.25);

\draw[out=90,in=90] (7,5.25) to node[whitecirc, pos=0.5](C){} (8,5.25);
\draw[out=-90,in=-90] (7,3.75) to node[whitecirc, pos=0.5](D){} (8,3.75);
\node[above] at (4,6.75) {\tiny $\cdots$};
\node[above] at (4,5) {\tiny $\cdots$};
\node[above] at (0,6.75) {\tiny $\phantom{+}i\phantom{+}$};
\node[above] at (1,6.75) {\tiny $\phantom{+}i+1\phantom{+}$};
\node[above] at (2,6.75) {\tiny $\phantom{+}i+2\phantom{+}$};
\node[above] at (3,6.75) {\tiny $\phantom{+}i+3\phantom{+}$};
\node[above] at (5,6.75) {\tiny $\phantom{+}n-1\phantom{+}$};
\node[above] at (6,6.75) {\tiny $\phantom{+}n\phantom{+}$};
\node[above] at (7,6.75) {\tiny $\phantom{+}n+1\phantom{+}$};
\node[above] at (8,6.75) {\tiny $\phantom{+}n+2\phantom{+}$};
\draw (8,3.75) to (8,5.25);
\draw (1,6.75) to (1,5.25);
\draw[densely dashed] (B) -- (A);
\end{tikzpicture}
}
\subcaptionbox{\label{fig:diagram descent set 2}}[.49\linewidth]{
\begin{tikzpicture}[baseline=-0.5ex,scale=.75]
\draw[gray,thick] (0,-6.75) to (0,6.75) to (9,6.75);
\draw[gray,thick,dashed] (9,-6.75) to (0,-6.75);
\foreach \x in {1,2,5,6,7,8} \filldraw (\x,6.75) circle (1pt);
\draw[out=-90,in=-90] (1,.75) to node[blackcirc, pos=0.5](A){} (2,.75);
\draw[out=90,in=90] (1,-.75) to node[blackcirc, pos=0.5](B){} (2,-.75);

\draw[out=90,in=90] (2,.75) to node[inner sep=0pt](C){} (3,.75);
\draw[out=-90,in=-90] (2,2.25) to node[inner sep=0pt](D){} (3,2.25);

\draw[out=-90,in=-90] (2,-.75) to node[inner sep=0pt](E){} (3,-.75);
\draw[out=90,in=90] (2,-2.25) to node[inner sep=0pt](F){} (3,-2.25);

\draw[out=90,in=90] (3,2.25) to node[inner sep=0pt](G){} (4,2.25);
\draw[out=-90,in=-90] (3,-2.25) to node[inner sep=0pt](H){} (4,-2.25);

\draw[out=90,in=90] (5,-3.75) to node[inner sep=0pt](I){} (6,-3.75);
\draw[out=-90,in=-90] (5,3.75) to node[inner sep=0pt](J){} (6,3.75);

\draw[out=90,in=90] (6,3.75) to node[inner sep=0pt](L){} (7,3.75);
\draw[out=-90,in=-90] (6,5.25) to node[inner sep=0pt](K){} (7,5.25);

\draw[out=90,in=90] (7,5.25) to node[rectangle, inner sep=2.2pt, draw=black, fill=white, pos=0.5](M){\tiny $\mathbf{b}$} (8,5.25);

\draw[out=-90,in=-90] (6,-3.75) to node[inner sep=0pt](O){} (7,-3.75);
\draw[out=90,in=90] (6,-5.25) to node[inner sep=0pt](P){} (7,-5.25);

\draw[out=-90,in=-90] (7,-5.25) to node[rectangle, inner sep=2.2pt, draw=black, fill=white, pos=0.5](Q){\tiny $\mathbf{b}$} (8,-5.25);

\node at (5,0) {$\longrightarrow$};
\node[above] at (3.5,6.75) {\tiny $\cdots$};

\node[above] at (1,6.75) {\tiny $\phantom{+}1\phantom{+}$};
\node[above] at (2,6.75) {\tiny $\phantom{+}2\phantom{+}$};
\node[above] at (5,6.75) {\tiny $\phantom{+}i\phantom{+}$};
\node[above] at (6,6.75) {\tiny $\phantom{+}i+1\phantom{+}$};
\node[above] at (7,6.75) {\tiny $\phantom{+}i+2\phantom{+}$};
\node[above] at (8,6.75) {\tiny $\phantom{+}i+3\phantom{+}$};

\draw (3,.75) to (3,-.75);
\draw (4,2.25) to (4,-2.25);
\draw (1,.75) to (1,6.75);
\draw (1,-.75) to (1,-6.75);
\draw (2,2.25) to (2,6.75);
\draw (2,-2.25) to (2,-6.75);
\draw (5,3.75) to (5,6.75);
\draw (5,-3.75) to (5,-6.75);
\draw (6,5.25) to (6,6.75);
\draw (6,-5.25) to (6,-6.75);
\draw (7,6.75) to (7,6.75);
\draw (7,-6.75) to (7,-6.75);
\draw (7,-3.75) to (7,3.75);
\draw (8,-5.25) to (8,5.25);

\draw[densely dashed] (A) -- (B);
\draw[densely dashed] (C) -- (D);
\draw[densely dashed] (E) -- (F);
\draw[densely dashed] (K) -- (L);
\draw[densely dashed] (O) -- (P);
\end{tikzpicture}
}
\caption{Portions of the diagrams corresponding to the proof of Lemma~\ref{lem:diagram descent set}.}\label{fig:diagram descent set}
\end{figure}

Case~(2). Next, assume $i \in \{2, n\}$.  As in Case~(1), $e$ must be undecorated.  Without loss of generality, assume $i=2$.  If Condition~(a) occurs, then we can apply the argument in Case~(1).  If Condition~(b) occurs, then the end of $e$ leaving node $i=2$ must immediately encounter $d_{1}$.  This contradicts $e$ being undecorated since there is no sequence of relations that can completely remove decorations from a non-loop edge.

Case~(3). Lastly, suppose $i \in \{1, n+1\}$. Without loss of generality, assume $i=1$. In this case, $e$ must be decorated by a single $\bcirc$.  This forces us to be in the situation described in Condition~(a) above.  Since $e$ must eventually encounter $d_{1}$, the edge must change direction from right to left. The only two possibilities for the edge leaving node $2$ in the simple representation for $d_{w}$ are depicted in Figure~\ref{fig:diagram descent set}, where $i+1=2$ and $\mathbf{b}$ is trivial. Either way, we arrive at a contradiction similar to Case~(1).

The proof that $s_{i} \in \R(w)$ if and only if there is a simple edge joining $i'$ to $(i+1)'$ is similar to the above.
\end{proof}

\end{subsection}


\begin{subsection}{Proof of injectivity}\label{subsec:injectivity}

The lemmas of the previous section allow us to prove that each monomial basis element maps to a single admissible diagram, as the next proposition illustrates.

\begin{proposition}\label{prop:monomials map to single diagrams}
If $w \in \FC(\C_{n})$, then $\theta(b_{w})=d_{w}$.
\end{proposition}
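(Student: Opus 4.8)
The plan is to induct on $l(w)$. If $w$ is non-cancellable---in particular if $l(w)=0$, so that $w=e$---then $\theta(b_w)=d_w$ by Lemma~\ref{lem:image zigzag} when $w$ is of type I and by Lemma~\ref{lem:image wsrm} otherwise. So suppose $w$ is not non-cancellable; then $w$ is weak star reducible, and, passing to the analogous situation on the other side if necessary, we may assume $w$ is left weak star reducible by some $s$ with respect to some $t$ to $w'=sw$. By Remark~\ref{rem:wsrm properties} we have $w=stv$ (reduced) when $m(s,t)=3$ and $w=stsv$ (reduced) when $m(s,t)=4$, and in either case $l(w')=l(w)-1$ while a one-line computation with the relations of Theorem~\ref{thm:affine C relations} gives $b_w=b_sb_{w'}$. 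Hence, by the inductive hypothesis, $\theta(b_w)=\theta(b_s)\theta(b_{w'})=d_sd_{w'}$, and it suffices to prove that the product $d_sd_{w'}$ formed in $\D_n$ is a single admissible diagram, carrying no power of $2$ or $\delta$ out front; by Lemma~\ref{lem:powers of 2 and delta for images of monomials} together with the fact that the admissible diagrams form a basis of $\D_n$ (Theorem~\ref{thm:admissibles basis for D_n}), this forces $\theta(b_w)=d_w$.

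Write $s=s_i$, and consider stacking the simple diagram $d_i$ on top of $d_{w'}$. Since $w'=sw$ we have $s\notin\L(w')$, so by Lemma~\ref{lem:diagram descent set} there is no simple edge joining $i$ to $i+1$ in the north face of $d_{w'}$; as any non-propagating edge of an admissible diagram joining two consecutive nodes is simple (undecorated in the interior, a single $\bcirc$ near node $1$, a single $\wcirc$ near node $n+2$, by axioms~\ref{C2}--\ref{C5}), in fact $d_{w'}$ has no non-propagating edge at all joining $i$ and $i+1$, so concatenating $d_i$ closes no loop and no factor of $\delta$ can appear. A factor of $2$ can only be produced by fusing decorations, which requires $d_i$ to be decorated, i.e.\ $i\in\{1,n+1\}$, which in turn forces $m(s,t)=4$. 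Thus when $m(s,t)=3$, and also when $m(s,t)=4$ with $s\in\{s_2,s_n\}$ (so that $d_i$ is undecorated), the product $d_sd_{w'}$ is a single admissible diagram and $\theta(b_w)=d_w$.

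The remaining case---and the main obstacle---is $m(s,t)=4$ with $s=s_1$ (and, symmetrically, $s=s_{n+1}$), where $d_1$ carries a single $\bcirc$ on both its cup and its cap and $w'=s_2s_1v$. Here I would argue: if $\a(d_{w'})=1$ then by Lemma~\ref{lem:a=1 implies type I} $w'$ is of type I, and since it begins with $s_2s_1$ it must in fact begin $s_2s_1s_2\cdots$, so $w=s_1w'$ would contain the subword $s_1s_2s_1s_2$, contradicting $w\in\FC(\C_n)$ (Remark~\ref{rem:illegal convex chains}); hence $\a(d_{w'})>1$ and axiom~\ref{C4} applies at node $1$. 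By that axiom the edge of $d_{w'}$ at node $1$ is either decorated by a single $\bcirc$---in which case fusing it with the $\bcirc$ on the cup of $d_1$ yields $\bcirc\,\bcirc=\btri$ and no scalar---or is a propagating edge joining $1$ to $1'$ decorated by a single $\btri$, which would fuse with the new $\bcirc$ via $\btri\,\bcirc=2\,\bcirc$ and produce a spurious $2$. One then shows the latter configuration cannot occur: a $\btri$-decorated propagating edge from $1$ to $1'$ in $d_{w'}$ forces $s_1$ to be internal to $w'$ (neither in $\L(w')$ nor $\R(w')$), and since $s_1\notin\L(w')$ this means some occurrence of $s_2$ lies to the right of the relevant occurrence of $s_1$ in $w'$, so that $w=s_1w'$ again contains the subword $s_1s_2s_1s_2$, contradicting $w\in\FC(\C_n)$. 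This last step is the delicate one: making it rigorous requires either a direct analysis of how the edge at node $1$ evolves in the concrete simple representation of $d_{w'}$ (using Lemma~\ref{lem:direction change} to control where edges reverse direction) or an auxiliary characterization of which monomials have a $\btri$-decorated propagating edge at an end node in their image; the rest of the argument is routine bookkeeping of decorations and loops under concatenation.
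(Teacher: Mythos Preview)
Your induction on $l(w)$ and the reduction $b_w=b_sb_{w'}$ are essentially the same skeleton the paper uses, but two of your shortcuts hide real work.

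First, your treatment of the case $\a(d_{w'})=1$ is not quite right.  You claim that if $w'$ is of type~I and begins $s_2s_1$ then it must in fact begin $s_2s_1s_2$, forcing $s_1s_2s_1s_2$ as a subword of $w$.  This fails when $v=e$: then $w'=s_2s_1$ is already of type~I (it is $\z_{2,1}$) and does not continue with $s_2$, yet $w=s_1s_2s_1$ is a perfectly good FC element with $\a(d_{w'})=1$.  The paper sidesteps this by disposing of \emph{all} type~I $w$ at the outset via Lemma~\ref{lem:image zigzag}; once $w$ is assumed not of type~I, Corollary~\ref{cor:weak. star ops preserve n-value} guarantees $w'$ is not of type~I either, and then Lemma~\ref{lem:a=1 implies type I} gives $\a(d_{w'})>1$ for free.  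You should do the same.

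Second, the assertion that ``a factor of $2$ can only be produced \dots\ which requires $d_i$ to be decorated'' is too quick, and in particular does not cover the case $s=s_2$ (or $s_n$) in the $m(s,t)=4$ situation.  When you stack the undecorated cap of $d_2$ on $d_{w'}$ with $w'=s_1s_2v$, that cap joins node~$2$ (sitting on the $\bullet$-decorated simple edge $1$--$2$, since $s_1\in\L(w')$) to node~$3$; the $\bullet$ from the $1$--$2$ cup is thereby placed adjacent to whatever closed decoration sits on $d_{w'}$'s edge at node~$3$, and a $\bullet\!\blacktriangle$ collision would produce a factor of~$2$.  Ruling this out requires exactly the kind of analysis the paper carries out in its Case~(2)(b): decompose all the way down to $d_v$ (using the inductive hypothesis to get $\theta(b_v)=d_v$), observe that the relevant edge is the one leaving node~$1$ of $d_v$, and invoke Lemma~\ref{lem:main zigzag lemma}/\ref{lem:zigzag} to show that a closed decoration there would force $w$ to be of type~I.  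Your sketch of the $s=s_1$ subcase has the same flavor but, as you acknowledge, leaves the delicate exclusion of the $\blacktriangle$-decorated $1$--$1'$ edge unproved; the paper's route through $d_v$ rather than $d_{w'}$ makes this exclusion concrete.
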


\begin{proof}
If $w \in \FC(\C_{n})$, then there exists a sequence (possibly trivial) of left and right weak star reductions that reduce $w$ to a non-cancellable element.  We induct on the number of steps in this sequence.  The base case is handled by Lemmas~\ref{lem:image zigzag} and~\ref{lem:image wsrm}. For the inductive step, it is sufficient to assume that $w$ is not of type I by Lemma~\ref{lem:image zigzag}. Without loss of generality, suppose $w$ is left weak star reducible by $s$ with respect to $t$.  We can write: (1) $w=stv$ (reduced) if $m(s,t)=3$ or (2) $w=stsv$ (reduced) if $m(s,t)=4$.  In either case, choose $s$ and $t$ such that $sw$ requires the least number of steps to reduce to a non-cancellable element. We now consider possibilities (1) and (2) separately.

Case~(1).  Assume $m(s,t)=3$.  Without loss of generality, suppose  $s=s_{i}$ and $t=s_{i+1}$ with $1<i<n$.  This implies that
{\allowdisplaybreaks
\begin{align*}
\theta(b_{w})&=\theta(b_{i}b_{s_{i+1}v})\\
&=\theta(b_{i})\theta(b_{s_{i+1}v})\\
&=d_{i} d_{s_{i+1}v}\\
&= \begin{tikzpicture}[scale=.75,baseline=-5.5ex]
\draw[gray,thick] (0,-1) rectangle (10,1);
\draw[gray,thick] (0,-3) rectangle (10,-1);
\foreach \x in {1,3,4,5,6,7,9} \filldraw (\x,-1) circle (1pt);
\foreach \x in {1,3,4,5,6,7,9} \filldraw (\x,1) circle (1pt);
\draw (1,-1) to (1,1);
\draw (3,-1) to (3,1);
\draw (6,-1) to (6,1);
\draw (7,-1) to (7,1);
\draw (9,-1) to (9,1);
\node at (2,0) {\tiny $\cdots$};
\node at (8,0) {\tiny $\cdots$};
\node at (5,-2) {$d_{s_{i+1}v}$};
\node[above] at (4,1) {\tiny $\phantom{+}i\phantom{+}$};
\node[above] at (5,1) {\tiny $\phantom{+}i+1\phantom{+}$};
\draw[out=-90,in=-90] (4,1) to (5,1);
\draw[out=90,in=90] (4,-1) to (5,-1);
\draw[out=-90,in=-90] (5,-1) to (6,-1);
\end{tikzpicture}\ ,
\end{align*}}%
where we are applying the induction hypothesis to $\theta(b_{s_{i+1}v})$ and we are using Lemma~\ref{lem:diagram descent set} to draw the bottom diagram in the last line.  By inspecting the product $d_{i}d_{s_{i+1}v}$, we see that there are no loops and no new relations to apply since $d_{s_{i+1}v}$ is admissible.  Therefore, $\theta(b_{w})=d_{w}$.

Case~(2).  Assume $m(s,t)=4$.  Without loss of generality, suppose $\{s,t\}=\{s_{1},s_{2}\}$.  Since $w=stsv$ (reduced) and $w$ is FC, neither $s$ nor $t$ are in $\L(v)$. Also, since $w$ is left weak star reducible by $s$ with respect to $t$ to $tsv$, we have $\theta(b_{tsv})=d_{tsv}$ by induction.  By Lemma~\ref{lem:powers of 2 and delta for images of monomials}, there exists $k', m' \in \Z^{+}\cup\{0\}$ such that $\theta(b_{v})=2^{k'}\delta^{m'}d_{v}$.  But then
{\allowdisplaybreaks
\begin{align*}
d_{tsv}&=\theta(b_{tsv})\\
&=\theta(b_{t}b_{s}b_{v})\\
&=\theta(b_{t})\theta(b_{s})\theta(b_{v})\\
&=d_{t}d_{s}2^{k'}\delta^{m'}d_{v}\\
&=2^{k'}\delta^{m'}d_{t}d_{s}d_{v}.
\end{align*}}%
Since $d_{tsv}$ is an admissible diagram, we must have $k'=0$ and $m'=0$, and so $\theta(b_{v})=d_{v}$. A similar argument shows that $\theta(b_{sv})=d_{sv}$.  Next, we consider two possible subcases: (a) $s=s_{1}$, $t=s_{2}$ and (b) $s=s_{2}$, $t=s_{1}$.  

(a)  Assume $s=s_{1}$ and $t=s_{2}$.  By Lemma~\ref{lem:powers of 2 and delta for images of monomials}, we see that
{\allowdisplaybreaks
\begin{align*}
2^{k}\delta^{m}d_{w}&=\theta(b_{w})\\
&=\theta(b_{1}b_{2}b_{1}b_{v})\\
&=\theta(b_{1})\theta(b_{2})\theta(b_{1})\theta(b_{v})\\
&=d_{1}d_{2}d_{1}d_{v}\\
&= \begin{tikzpicture}[scale=.75,baseline=-5.5ex]
\draw[gray,thick] (0,-1) rectangle (10,1);
\draw[gray,thick] (0,-3) rectangle (10,-1);
\foreach \x in {1,2,3,4,9} \filldraw (\x,-1) circle (1pt);
\foreach \x in {1,2,3,4,9} \filldraw (\x,1) circle (1pt);
\draw (3,-1) to node[blacktri, pos=0.5]{} (3,1);
\draw (4,-1) to (4,1);
\draw (9,-1) to (9,1);
\node at (6.5,0) {\tiny $\cdots$};
\node at (5,-2) {$d_{v}$};
\draw[out=-90,in=-90] (1,1) to node[blackcirc, pos=0.5]{} (2,1);
\draw[out=90,in=90] (1,-1) to node[blackcirc, pos=0.5]{} (2,-1);
\end{tikzpicture}
\end{align*}}%
for some $k, m \in \Z^{+}\cup\{0\}$. Since $s_{1} \notin \L(v)$, there cannot be a simple edge joining node 1 to node 2 in the north face of $d_{v}$ by Lemma~\ref{lem:diagram descent set}.  This implies that $m=0$.  Moreover, the edge leaving node 3 of $d_{v}$ is not exposed to the west, and so it cannot be decorated with a closed symbol.  Since $s_{2} \notin \L(v)$, there cannot be a simple edge joining node 2 to node 3 in $d_{v}$.  In order for $d_{1}d_{2}d_{1}d_{v}$ to be equal to a non-zero power of 2 times $d_{w}$, the edge leaving node 1 in $d_{v}$ must be decorated with a closed decoration.  Since we do not have a simple edge between nodes 2 and 3 in $d_v$, the first  decoration on the edge leaving node 1 in $d_{v}$ cannot be a $\bcirc$. If the first decoration occurring on the edge leaving node 1 in $d_{v}$ is a $\btri$, then $d_{1}d_{v}=2 d'$ for some admissible diagram $d'$.  But this contradicts $d_{1}d_{v}=\theta(b_{1})\theta(b_{v})=\theta(b_{sv})=d_{sv}$. So, $k=0$ as expected.

(b) Now, suppose $s=s_{2}$ and $t=s_{1}$.  In this case, we see that
{\allowdisplaybreaks
\begin{align*}
2^{k}\delta^{m}d_{w}&=\theta(b_{w})\\
&=\theta(b_{2}b_{1}b_{2}b_{v})\\
&=\theta(b_{2})\theta(b_{1})\theta(b_{2})\theta(b_{v})\\
&=d_{2}d_{1}d_{2}d_{v}\\
&= \begin{tikzpicture}[scale=.75,baseline=-5.5ex]
\draw[gray,thick] (0,-1) rectangle (10,1);
\draw[gray,thick] (0,-3) rectangle (10,-1);
\foreach \x in {1,2,3,4,9} \filldraw (\x,-1) circle (1pt);
\foreach \x in {1,2,3,4,9} \filldraw (\x,1) circle (1pt);
\draw (1,-1) to node[blacktri, pos=0.5]{} (1,1);
\draw (4,-1) to (4,1);
\draw (9,-1) to (9,1);
\node at (6.5,0) {\tiny $\cdots$};
\node at (5,-2) {$d_{v}$};
\draw[out=-90,in=-90] (2,1) to (3,1);
\draw[out=90,in=90] (2,-1) to (3,-1);
\end{tikzpicture}\ . 
\end{align*}}%
Since $s_{2} \notin \L(v)$, there cannot be a simple edge joining node 2 to node 3 in the north face of $d_{v}$ by Lemma~\ref{lem:diagram descent set}.  This implies that there can be no loops in the product in the last line above, and so $m=0$.  For sake of a contradiction, suppose $k=0$. Then the edge leaving node 1 in $d_{v}$ must be decorated with a closed decoration. This implies that if $d_{v}$ is written as a product of simple diagrams, there must be at least one occurrence of $d_{1}$. Then we must have $s_{1} \in \supp(v)$, which implies that $tsv=s_{1}s_{2}v$ contains at least two occurrences of $s_{1}$. Consider the top two occurrences of $s_{1}$ in the canonical representation of $H(tsv)=H(s_{1}s_{2}v)$.  Since $w=s_{2}s_{1}s_{2}v$ (reduced) and $w$ is FC, there must be an entry in $H(v)$ labeled by $s_{2}$ that covers the highest occurrence of $s_{1}$.  By a right-handed version of Lemma~\ref{lem:main zigzag lemma}, $\z_{1,1}^{R,1}$ must be a subword of some reduced expression for $w$. But then $w$ must be of type I by Lemma~\ref{lem:zigzag}.  This contradicts our earlier assumption that $w$ is not of type I.  Therefore, the edge leaving node 1 in $d_{v}$ does not carry any closed decorations.  Hence $k=0$.
\end{proof}

The next lemma will be useful for simplifying the argument in the proof of Theorem~\ref{thm:main result}.

\begin{lemma}\label{lem:nugget}
Let $w, w' \in \FC(\C_{n})$. If $d_{w}=d_{w'}$ and $w'$ is left weak star reducible by $s$ with respect to $t$, then
\[
b_{t}b_{w}=\begin{cases}
b_{w''}, & \text{if } m(s,t)=3\\
2b_{w''}, & \text{if } m(s,t)=4,
\end{cases}
\]
for some $w'' \in \FC(\C_{n})$.
\end{lemma}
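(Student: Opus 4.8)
The plan is to exploit Proposition~\ref{prop:monomials map to single diagrams}, which guarantees $\theta(b_x)=d_x$ for \emph{every} $x\in\FC(\C_n)$, together with Lemma~\ref{lem:powers of 2 and delta monomials} and the basis property of the admissible diagrams (Theorem~\ref{thm:admissibles basis for D_n}). First I would apply Lemma~\ref{lem:powers of 2 and delta monomials} to write $b_tb_w=2^a\delta^bb_{w''}$ for some $a,b\in\Z^{+}\cup\{0\}$ and some $w''\in\FC(\C_n)$. The entire task is then to show that $b=0$, and that $a=0$ when $m(s,t)=3$ while $a=1$ when $m(s,t)=4$. Applying $\theta$ and using the hypothesis $d_w=d_{w'}$, I obtain
\[
2^a\delta^bd_{w''}=\theta(b_tb_w)=d_td_w=d_td_{w'}=\theta(b_tb_{w'}).
\]

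Second, I would evaluate the right-hand side using that $w'$ is left weak star reducible by $s$ with respect to $t$. By Remark~\ref{rem:monomial weak star reductions} we can write $w'=stv'$ (reduced) with $b_tb_{w'}=b_{tv'}$ when $m(s,t)=3$, or $w'=stsv'$ (reduced) with $b_tb_{w'}=2b_{tsv'}$ when $m(s,t)=4$; in both cases the element $sw'$ (equal to $tv'$ or $tsv'$) lies in $\FC(\C_n)$, since passing to $sw'$ removes the left descent $s$ and hence preserves full commutativity. Applying Proposition~\ref{prop:monomials map to single diagrams} again, $\theta(b_tb_{w'})=d_{sw'}$ when $m(s,t)=3$ and $\theta(b_tb_{w'})=2d_{sw'}$ when $m(s,t)=4$.

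Combining the two computations gives $2^a\delta^bd_{w''}=d_{sw'}$ (respectively $2^a\delta^bd_{w''}=2d_{sw'}$) in $\D_n$. Since $d_{w''}$ and $d_{sw'}$ are admissible diagrams and these form a $\Z[\delta]$-basis of $\D_n$, linear independence forces $d_{w''}=d_{sw'}$ and then $2^a\delta^b=1$ (respectively $2^a\delta^b=2$) in $\Z[\delta]$; comparing the $\delta$-degree and the constant term yields $a=b=0$ (respectively $a=1$, $b=0$). Therefore $b_tb_w=b_{w''}$ when $m(s,t)=3$ and $b_tb_w=2b_{w''}$ when $m(s,t)=4$, with $w''\in\FC(\C_n)$, which is the claim.

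The point demanding attention is that this lemma is a stepping stone toward Theorem~\ref{thm:main result}, the injectivity of $\theta$, so the argument must not quietly cancel $\theta$ from an equation; the degrees of freedom $a,b$ have to be pinned down \emph{inside} $\D_n$, which is exactly what the basis of admissible diagrams (Theorem~\ref{thm:admissibles basis for D_n}) makes possible. Once one resists that temptation the proof is short. (An analogous statement and proof hold for right weak star reducibility.)
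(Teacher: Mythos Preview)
Your argument is correct and follows essentially the same route as the paper: compute $\theta(b_tb_w)=d_td_w=d_td_{w'}=\theta(b_tb_{w'})$ via Proposition~\ref{prop:monomials map to single diagrams}, evaluate the right-hand side using Remark~\ref{rem:monomial weak star reductions}, and conclude. Your write-up is in fact more explicit than the paper's, which suppresses the invocation of Lemma~\ref{lem:powers of 2 and delta monomials} and the appeal to the linear independence of admissible diagrams (Theorem~\ref{thm:admissibles basis for D_n}) that you spell out.
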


\begin{proof}
Since $w'$ is left weak star reducible by $s$ with respect to $t$, we can write
\[
w'=\begin{cases}
stv', & \text{if } m(s,t)=3\\
stsv', & \text{if } m(s,t)=4,
\end{cases}
\]
where each product is reduced. Proposition~\ref{prop:monomials map to single diagrams} together Remark~\ref{rem:monomial weak star reductions} implies that
{\allowdisplaybreaks
\begin{align*}
\theta(b_{t}b_{w})&=d_{t}d_{w}\\
&=d_{t}d_{w'}\\
&=\theta(b_{t}b_{w'})\\
&=\begin{cases}
\theta(b_{tv'}), & \text{if } m(s,t)=3\\
2\theta(b_{tsv'}), & \text{if } m(s,t)=4
\end{cases}\\
&=\begin{cases}
d_{tv'}, & \text{if } m(s,t)=3\\
2d_{tsv'}, & \text{if } m(s,t)=4.
\end{cases}
\end{align*}}%
Therefore, it must be the case that
\[
b_{t}b_{w}=\begin{cases}
b_{w''}, & \text{if } m(s,t)=3\\
2b_{w''}, & \text{if } m(s,t)=4
\end{cases}
\]
for some $w'' \in \FC(\C_{n})$.
\end{proof}

Note that Lemma~\ref{lem:nugget} has an analogous statement involving right weak star reductions.

\begin{theorem}\label{thm:main result}
The map $\theta$ of Proposition~\ref{prop:surjective homomorphism} is an isomorphism of $\TL(\C_{n})$ and $\DTL(\C_n)$.  Moreover, each admissible diagram corresponds to a unique monomial basis element.
\end{theorem}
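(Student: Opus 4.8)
Since $\theta$ is already known to be a surjective algebra homomorphism (Proposition~\ref{prop:surjective homomorphism}), the entire theorem reduces to proving injectivity, after which the statement about admissible diagrams follows by a counting-free bijection argument. Because $\{b_w : w \in \FC(\C_n)\}$ is an $\A$-basis (indeed a $\Z[\delta]$-basis) of $\TL(\C_n)$, and because Proposition~\ref{prop:monomials map to single diagrams} tells us $\theta(b_w) = d_w$ is a single admissible diagram with no scalar multiple, it suffices to prove that the assignment $w \mapsto d_w$ is an \emph{injection} from $\FC(\C_n)$ into the set of admissible $(n+2)$-diagrams. Once we have that, linear independence of $\{d_w\}$ in $\D_n$ (they are distinct basis elements of $\D_n$ by Theorem~\ref{thm:admissibles basis for D_n}) forces $\theta$ to send a basis to a linearly independent set, hence $\theta$ is injective; and since $\theta$ is also surjective, $\{d_w : w \in \FC(\C_n)\}$ must be \emph{all} of the admissible diagrams (each admissible diagram is $\theta$ of something, and by Proposition~\ref{prop:monomials map to single diagrams} that something's image is exactly a $d_{w}$), giving the claimed bijection between admissible diagrams and monomial basis elements.

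\textbf{Proving $w \mapsto d_w$ is injective.} This is the heart of the matter and I would do it by induction on $l(w) + l(w')$ (or on the number of weak star reductions needed to reach a non-cancellable element), showing that if $d_w = d_{w'}$ then $w = w'$. The base case is when one of $w, w'$ is non-cancellable: if $w$ is non-cancellable and not of type I, then $\a(d_w) = n(w) > 1$ by Lemma~\ref{lem:image wsrm}, while if $w$ is of type I then $\a(d_w) = 1$ by Lemma~\ref{lem:a=1 implies type I}; in either case Lemma~\ref{lem:image wsrm}(ii) and Lemma~\ref{lem:image zigzag}(ii) say distinct such elements have distinct diagrams, and Lemma~\ref{lem:a=1 implies type I} plus $\a$ being a diagram invariant forces $w'$ into the same class, so $w' = w$. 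For the inductive step, suppose $w$ is neither non-cancellable nor of type I, so $w$ is (say) left weak star reducible by $s$ with respect to $t$; write $w = stv$ (if $m(s,t)=3$) or $w = stsv$ (if $m(s,t)=4$). By Lemma~\ref{lem:diagram descent set}, $s \in \L(w)$ is detected by a simple edge in the north face of $d_w$, hence of $d_{w'}$, so $s \in \L(w')$ as well. Now apply Lemma~\ref{lem:nugget} (using $d_w = d_{w'}$): $b_t b_w$ and $b_t b_{w'}$ both equal $b_{w''}$ (or $2b_{w''}$) for the \emph{same} $w''$, because both equal $\theta^{-1}$-side of $d_t d_w = d_t d_{w'}$ via Proposition~\ref{prop:monomials map to single diagrams}. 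The upshot is that the weak-star-reduced elements $\star^L_{s,t}(w)$ and $\star^L_{s,t}(w')$ have the same diagram and are strictly shorter, so by induction they are equal; reversing the reduction via Lemma~\ref{lem:weak star reverse} (which recovers $w$ from its reduction uniquely since $w = st\,(\star^L_{s,t}w)$ or $w = sts\,(\star^L_{s,t}w)$ as a reduced product) yields $w = w'$.

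\textbf{The main obstacle.} The delicate point is ensuring the inductive machinery is well-founded and that the reduction is genuinely reversible at the level of elements: one must check that $s \in \L(w')$ really does let us perform the \emph{same} weak star reduction on $w'$ that we did on $w$ — i.e. that $w'$ is left weak star reducible by the same $s$ with respect to the same $t$, not merely star reducible. This is where Remark~\ref{rem:wsrm properties} and Lemma~\ref{lem:diagram descent set} must be combined carefully: $s \in \L(w') \cap \L(w)$ is immediate from the shared simple edge, but weak star reducibility additionally requires $tw' \notin \FC$, which I would extract from $tw \notin \FC$ together with the fact (Lemma~\ref{lem:nugget} and its proof) that $b_t b_{w'} = b_t b_w$ collapses in length exactly when $m(s,t)=3$ vs.\ picks up a factor of $2$ exactly when $m(s,t)=4$ — the scalar and length bookkeeping pins down that the same reduction type applies. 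The case $m(s,t)=4$ with $\{s,t\}=\{s_1,s_2\}$ is the genuinely fussy one, mirroring the care already taken in the proof of Proposition~\ref{prop:monomials map to single diagrams}; I expect the type~I escape hatch (via Lemma~\ref{lem:zigzag} and the right-handed Lemma~\ref{lem:main zigzag lemma}) to be needed there to rule out the degenerate configurations, exactly as in that earlier proof. Finally, the ``moreover'' clause requires one observation beyond surjectivity: that $\theta(b_w) = d_w$ (no scalar), which is precisely Proposition~\ref{prop:monomials map to single diagrams}, so surjectivity of $\theta$ plus that proposition gives that every admissible diagram is some $d_w$, and injectivity of $w \mapsto d_w$ gives uniqueness of the $w$.
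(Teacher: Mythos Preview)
Your overall framework is right --- reduce to injectivity of $w \mapsto d_w$ and attack via weak star reductions --- and matches the paper's. But the argument has a genuine gap at exactly the point you flag as the ``main obstacle,'' and your proposed fix does not close it.

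Concretely, two things fail. First, your base case is incomplete: when $w$ is non-cancellable and not of type~I, the $\a$-value argument only forces $w'$ to be \emph{not of type~I}; it does not force $w'$ to be non-cancellable. Lemmas~\ref{lem:image zigzag}(ii) and~\ref{lem:image wsrm}(ii) compare two elements \emph{both} in the stated class, so they say nothing about the case $w$ non-cancellable, $w'$ cancellable. Second, in the inductive step you need $w'$ to be left weak star reducible by the same $s$ with respect to the same $t$; your scalar bookkeeping only distinguishes $m(s,t)=3$ from $m(s,t)=4$, and in the $m(s,t)=3$ case both possibilities $tw' \in \FC$ and $tw' \notin \FC$ yield $b_t b_{w'} = b_{w''}$ with coefficient $1$, so nothing rules out $w'' = tw'$ with $l(w'') = l(w')+1$. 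In that event $\star^L_{s,t}(w')$ is undefined, the sum $l(w)+l(w')$ does not decrease, and Lemma~\ref{lem:weak star reverse} does not apply to $w'$. (Your sentence ``$b_t b_{w'} = b_t b_w$'' is also not available: only $\theta(b_t b_{w'}) = \theta(b_t b_w)$ is known, and equating the preimages is precisely the injectivity you are proving.)

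The paper confronts this head-on rather than circumventing it. It first uses reversibility (Lemma~\ref{lem:weak star reverse}) to reduce, without loss of generality, to the case $w$ non-cancellable and $w'$ not --- i.e., exactly the case your base case misses --- and then exploits the explicit classification of non-cancellable elements (Theorem~\ref{thm:affineCwsrm}). Knowing $w'$ is weak star reducible by some $s$ with respect to some $t$ while $w$ is non-cancellable gives $b_t b_w = b_{tw}$ with no factor of $2$, which together with Lemma~\ref{lem:nugget} kills the $m(s,t)=4$ case. In the remaining $m(s,t)=3$ case the descent-set constraints force $w$ to be a product of a type-$B$ and a type-$B'$ non-cancellable piece, which pins down the diagram $d_w$ so tightly (no edge crosses $x=i+1/2$, no closed decorations to the right) that the required shape of $d_{w'}$, analyzed via Lemmas~\ref{lem:diagram version main zigzag lemma} and~\ref{lem:direction change}, is incompatible with it. Your sketch never invokes Theorem~\ref{thm:affineCwsrm} in this structural way, and without it the $m(s,t)=3$ case cannot be closed.
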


\begin{proof}
According to Proposition~\ref{prop:surjective homomorphism}, $\theta$ is a surjective homomorphism.  Also, by Proposition~\ref{prop:monomials map to single diagrams}, the image of each monomial basis element is a single admissible diagram.  It remains to show that $\theta$ is injective.  For sake of a contradiction, suppose $\theta(b_{w})=\theta(b_{w'})$ for some $w, w' \in \FC(\C_{n})$ with $w \neq w'$. By Lemma~\ref{lem:diagram descent set}, $\L(w)=\L(w')$ and $\R(w)=\R(w')$ since $d_{w}=d_{w'}$.

By Lemmas~\ref{lem:image zigzag} and \ref{lem:a=1 implies type I}, if either of $w$ or $w'$ is of type I, then it must be the case that both are of type I. But then by Lemma~\ref{lem:image zigzag}, we must have $w=w'$, and so neither of $w$ nor $w'$ is of type I.  

Now, suppose neither of $w$ nor $w'$ is non-cancellable.  Then there exists a sequence of left and right weak star reductions that reduces $w'$ to a non-cancellable element, say $w''$.  By Remark~\ref{rem:monomial weak star reductions}, there exists $t'_1, \ldots, t'_l$ and $t''_1, \ldots, t''_{r}$ from $S$ such that
\begin{equation}\label{eqn1}
b_{t'_{l}}\cdots b_{t'_{1}}b_{w'}b_{t''_{1}}\cdots b_{t''_{r}}=2^{k}b_{w''},
\end{equation}
where $k\geq 0$.  By Lemma~\ref{lem:nugget} and its analogous right-handed version, it follows that
\begin{equation}\label{eqn2}
b_{t'_{l}}\cdots b_{t'_{1}}b_{w}b_{t''_{1}}\cdots b_{t''_{r}}=2^{k}b_{w'''},
\end{equation}
for some $w''' \in \FC(\C_{n})$. By applying $\theta$ to Equations~\ref{eqn1} and \ref{eqn2}, we see that $\theta(b_{w''})=\theta(b_{w'''})$ since $\theta(b_{w'})=\theta(b_{w})$. Lemma~\ref{lem:weak star reverse} implies that there exists $k' \geq 0$ such that
\[
b_{s'_{1}}\cdots b_{s'_{l}}b_{t'_{l}}\cdots b_{t'_{1}}b_{w'}b_{t''_{1}}\cdots b_{t''_{r}}b_{s''_{r}}\cdots b_{s''_{1}}=2^{k'}b_{w'},
\]
which implies that
\[
b_{s'_{1}}\cdots b_{s'_{l}}b_{t'_{l}}\cdots b_{t'_{1}}b_{w}b_{t''_{1}}\cdots b_{t''_{r}}b_{s''_{l}}\cdots b_{s''_{1}}=2^{k'}b_{w}.
\]
That is, we can reverse the sequences that reduced $b_{w'}$ (respectively, $b_{w}$) to a multiple of $b_{w''}$ (respectively, $b_{w'''}$).  Hence we may simplify our argument and assume that at least one of $w$ or $w'$ is non-cancellable.  

Without loss of generality, suppose $w$ is non-cancellable.  If $w'$ is also non-cancellable, then we must have $w=w'$ since monomials indexed by distinct non-cancellable elements map to distinct diagrams by Lemmas~\ref{lem:image zigzag} and~\ref{lem:image wsrm}.  So, $w'$ is not non-cancellable. 

Again, without loss of generality, suppose $w'$ is left weak star reducible by $s$ with respect to $t$, so that we may write 
\[
w'=\begin{cases}
stv', & \text{if } m(s,t)=3\\
stsv', & \text{if } m(s,t)=4,
\end{cases}
\]
where each product is reduced.  By Remark~\ref{rem:monomial weak star reductions}, we see that
\[
b_{t}b_{w'}=\begin{cases}
b_{tv'}, & \text{if } m(s,t)=3\\
2b_{tsv'}, & \text{if } m(s,t)=4.
\end{cases}
\]
Note that since $\L(w)=\L(w')$ and $s \in \L(w')$, we have $s \in \L(w)$.  Since $w$ is non-cancellable, $tw$ is reduced and FC.  Then $b_{t}b_{w}=b_{tw}$, which shows that $m(s,t) \neq 4$ by Lemma~\ref{lem:nugget}.  So we must have $m(s,t)=3$.  

Without loss of generality, suppose $s=s_{i+1}$ and $t=s_{i}$ with $2<i<n+1$, so that $w'=s_{i+1}s_{i}v'$ (reduced).  By Lemma~\ref{lem:diagram descent set}, $d_{w'}$, and hence $d_{w}$, has a simple edge joining node $i+1$ to node $i+2$. Moreover,   $s_{i-1} \notin \L(w')=\L(w)$ since $w'=s_{i+1}s_{i}v'$ and $w'$ is FC. Thus, $w$ cannot be of type II since $s_{i+1} \in \L(w)$ while $s_{i-1} \notin \L(w)$.

Since $w$ is non-cancellable but neither of type I nor type II, it follows from the classification in~\cite[Theorem 5.1.1]{Ernst2010} that $w$ can be written as a reduced product of a type $B$ non-cancellable element times a type $B'$ non-cancellable element with non-overlapping supports.  This implies that $w$ contains a single occurrence of $s_{i+1}$ and no occurrences of $s_{i}$.  Then $d_{w}$, and hence $d_{w'}$, can be drawn so that no edges intersect the vertical midline between nodes $i$ and $i+1$.  Furthermore, there are no closed (respectively, open) decorations occurring to the right (respectively, left) of this line.  However, we see that
\[
d_{w'} = d_{i+1}d_{i}d_{v'} = \begin{tikzpicture}[scale=.75,baseline=-5.5ex]
\draw[gray,thick] (0,-1) rectangle (10,1);
\draw[gray,thick] (0,-3) rectangle (10,-1);
\foreach \x in {1,3,4,5,6,7,9} \filldraw (\x,-1) circle (1pt);
\foreach \x in {1,3,4,5,6,7,9} \filldraw (\x,1) circle (1pt);
\draw (1,-1) to (1,1);
\draw (3,-1) to (3,1);
\draw (7,-1) to (7,1);
\draw (9,-1) to (9,1);
\draw[out=-90,in=90] (4,1) to (6,-1);
\node at (2,0) {\tiny $\cdots$};
\node at (8,0) {\tiny $\cdots$};
\node at (5,-2) {$d_{v'}$};
\node[above] at (4,1) {\tiny $\phantom{+}i\phantom{+}$};
\node[above] at (5,1) {\tiny $\phantom{+}i+1\phantom{+}$};
\node[above] at (6,1) {\tiny $\phantom{+}i+2\phantom{+}$};
\draw[out=-90,in=-90] (5,1) to (6,1);
\draw[out=90,in=90] (4,-1) to (5,-1);
\end{tikzpicture}\ . 
\]
This implies that the edge leaving node $i$ in the simple representation of $d_{w'}$ must change direction from right to left.  By Lemma~\ref{lem:direction change} and Remark~\ref{rem:direction change}, the  simple representation of $d_{w'}$ must be vertically equivalent to one of the diagrams in Figure~\ref{fig:main result}.  But we cannot have the diagram in Figure~\ref{fig:main result 1} since then we would have open decorations occurring to the left of the vertical midline between nodes $i$ and $i+1$.  So we must have the diagram in Figure~\ref{fig:main result 2}, which puts us in the situation of Lemma~\ref{lem:diagram version main zigzag lemma}.  Since $w$ is not of type I, there are no other occurrences of the generators $d_{1}, \dots, d_{i}$ in the simple representation of $d_{w'}$.  This implies that $d_{w}$ has a propagating edge connecting node $1$ to node $1'$ that is labeled by a single $\btri$.  By inspecting the images of monomials indexed by non-type I and non-type II non-cancellable elements~\cite[Theorem 5.1.1]{Ernst2010}, we see that none of them have this configuration.  Therefore, we have a contradiction, and hence $\theta$ is injective.
\end{proof}

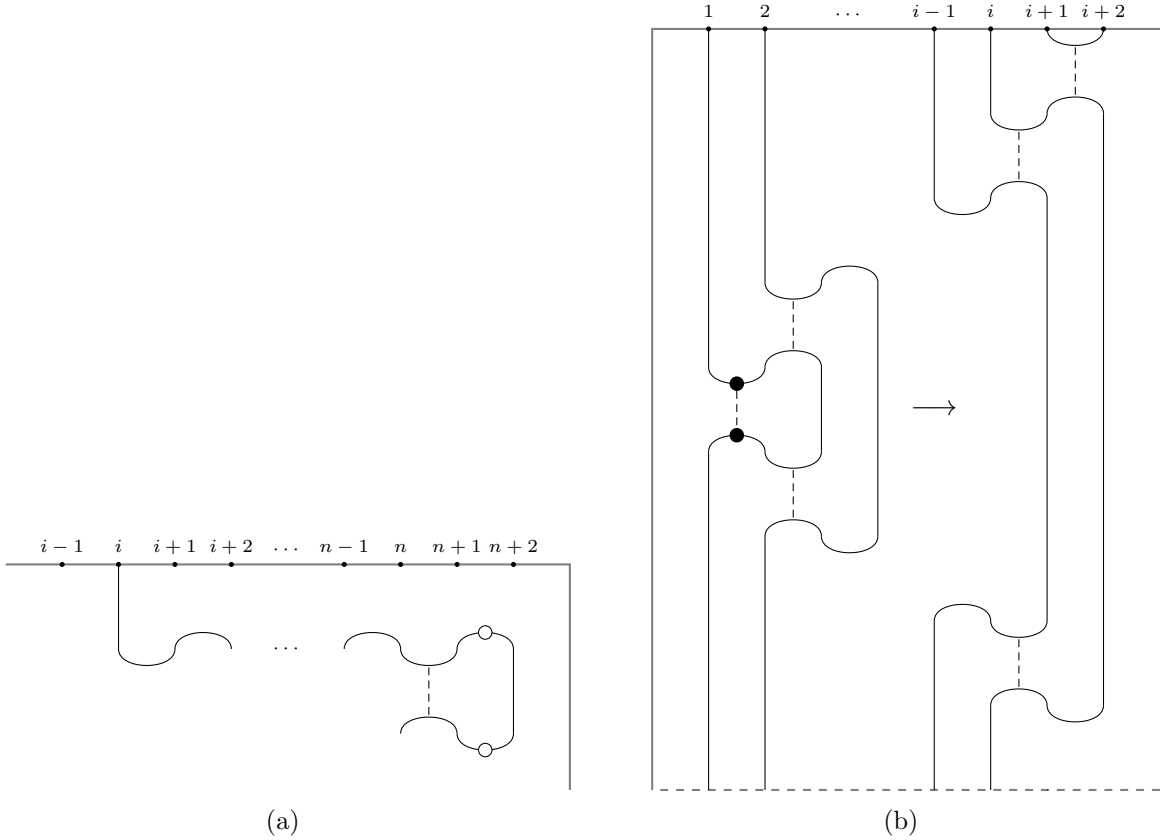
\begin{figure}[!ht]
\subcaptionbox{\label{fig:main result 1}}[.49\linewidth]{
\begin{tikzpicture}[scale=.75]
\draw[gray,thick] (9,2.75) to (9,6.75) to (-1,6.75);
\foreach \x in {0,1,2,3,5,6,7,8} \filldraw (\x,6.75) circle (1pt);
\draw[out=90,in=90] (6,3.75) to node[inner sep=0pt](A){} (7,3.75);
\draw[out=-90,in=-90] (6,5.25) to node[inner sep=0pt](B){} (7,5.25);
\draw[out=-90,in=-90] (1,5.25) to node[inner sep=0pt](G){} (2,5.25);
\draw[out=90,in=90] (5,5.25) to node[inner sep=0pt](E){} (6,5.25);
\draw[out=90,in=90] (2,5.25) to node[inner sep=0pt](F){} (3,5.25);

\draw[out=90,in=90] (7,5.25) to node[whitecirc, pos=0.5](C){} (8,5.25);
\draw[out=-90,in=-90] (7,3.75) to node[whitecirc, pos=0.5](D){} (8,3.75);
\node[above] at (4,6.75) {\tiny $\cdots$};
\node[above] at (4,5) {\tiny $\cdots$};
\node[above] at (0,6.75) {\tiny $\phantom{+}i-1\phantom{+}$};
\node[above] at (1,6.75) {\tiny $\phantom{+}i\phantom{+}$};
\node[above] at (2,6.75) {\tiny $\phantom{+}i+1\phantom{+}$};
\node[above] at (3,6.75) {\tiny $\phantom{+}i+2\phantom{+}$};
\node[above] at (5,6.75) {\tiny $\phantom{+}n-1\phantom{+}$};
\node[above] at (6,6.75) {\tiny $\phantom{+}n\phantom{+}$};
\node[above] at (7,6.75) {\tiny $\phantom{+}n+1\phantom{+}$};
\node[above] at (8,6.75) {\tiny $\phantom{+}n+2\phantom{+}$};
\draw (8,3.75) to (8,5.25);
\draw (1,6.75) to (1,5.25);
\draw[densely dashed] (B) -- (A);
\end{tikzpicture}
}
\subcaptionbox{\label{fig:main result 2}}[.49\linewidth]{
\begin{tikzpicture}[baseline=-0.5ex,scale=.75]
\draw[gray,thick] (0,-6.75) to (0,6.75) to (9,6.75);
\draw[gray,thick,dashed] (9,-6.75) to (0,-6.75);
\foreach \x in {1,2,5,6,7,8} \filldraw (\x,6.75) circle (1pt);
\draw[out=-90,in=-90] (1,.75) to node[blackcirc, pos=0.5](A){} (2,.75);
\draw[out=90,in=90] (1,-.75) to node[blackcirc, pos=0.5](B){} (2,-.75);

\draw[out=90,in=90] (2,.75) to node[inner sep=0pt](C){} (3,.75);
\draw[out=-90,in=-90] (2,2.25) to node[inner sep=0pt](D){} (3,2.25);

\draw[out=-90,in=-90] (2,-.75) to node[inner sep=0pt](E){} (3,-.75);
\draw[out=90,in=90] (2,-2.25) to node[inner sep=0pt](F){} (3,-2.25);

\draw[out=90,in=90] (3,2.25) to node[inner sep=0pt](G){} (4,2.25);
\draw[out=-90,in=-90] (3,-2.25) to node[inner sep=0pt](H){} (4,-2.25);

\draw[out=90,in=90] (5,-3.75) to node[inner sep=0pt](I){} (6,-3.75);
\draw[out=-90,in=-90] (5,3.75) to node[inner sep=0pt](J){} (6,3.75);

\draw[out=90,in=90] (6,3.75) to node[inner sep=0pt](L){} (7,3.75);
\draw[out=-90,in=-90] (6,5.25) to node[inner sep=0pt](K){} (7,5.25);

\draw[out=90,in=90] (7,5.25) to node[inner sep=0pt](M){} (8,5.25);

\draw[out=-90,in=-90] (6,-3.75) to node[inner sep=0pt](O){} (7,-3.75);
\draw[out=90,in=90] (6,-5.25) to node[inner sep=0pt](P){} (7,-5.25);

\draw[out=-90,in=-90] (7,-5.25) to node[inner sep=0pt](Q){} (8,-5.25);
\draw[out=-90,in=-90] (7,6.75) to node[inner sep=0pt](R){} (8,6.75);

\node at (5,0) {$\longrightarrow$};
\node[above] at (3.5,6.75) {\tiny $\cdots$};

\node[above] at (1,6.75) {\tiny $\phantom{+}1\phantom{+}$};
\node[above] at (2,6.75) {\tiny $\phantom{+}2\phantom{+}$};
\node[above] at (5,6.75) {\tiny $\phantom{+}i-1\phantom{+}$};
\node[above] at (6,6.75) {\tiny $\phantom{+}i\phantom{+}$};
\node[above] at (7,6.75) {\tiny $\phantom{+}i+1\phantom{+}$};
\node[above] at (8,6.75) {\tiny $\phantom{+}i+2\phantom{+}$};

\draw (3,.75) to (3,-.75);
\draw (4,2.25) to (4,-2.25);
\draw (1,.75) to (1,6.75);
\draw (1,-.75) to (1,-6.75);
\draw (2,2.25) to (2,6.75);
\draw (2,-2.25) to (2,-6.75);
\draw (5,3.75) to (5,6.75);
\draw (5,-3.75) to (5,-6.75);
\draw (6,5.25) to (6,6.75);
\draw (6,-5.25) to (6,-6.75);
\draw (7,6.75) to (7,6.75);
\draw (7,-6.75) to (7,-6.75);
\draw (7,-3.75) to (7,3.75);
\draw (8,-5.25) to (8,5.25);

\draw[densely dashed] (A) -- (B);
\draw[densely dashed] (C) -- (D);
\draw[densely dashed] (E) -- (F);
\draw[densely dashed] (K) -- (L);
\draw[densely dashed] (O) -- (P);
\draw[densely dashed] (R) -- (M);
\end{tikzpicture}
}
\caption{Portions of the diagrams corresponding to the proof of Theorem~\ref{thm:main result}.}\label{fig:main result}
\end{figure}

For convenience in wording our final result, define $w\in\FC(\C_n)$ to be a \emph{partial type II} element if $w=uxv$ (reduced) such that $x$ is of type II but not equal to $\x_{\E}$ when $n$ is even.

\begin{corollary}\label{cor:finiteNonTypeIandII}
There are finitely many elements in $\FC(\C_n)$ that are neither a type I nor a partial type II element.
\end{corollary}

\begin{proof}
By construction, there are finitely many diagrams in $\DTL(\C_n)$ that do not have minimal or maximal $\a$-value.  It follows that there are finitely many elements in $\FC(\C_n)$ that are neither a type I nor a partial type II element since these elements index diagrams that do not have extremal $\a$-values.
\end{proof}

\end{subsection}

\end{section}


\begin{section}{Closing remarks}\label{sec:closing}

In this paper, we proved that the associative diagram algebra $\DTL(\C_n)$ introduced in~\cite{Ernst2012} is a faithful representation of the generalized Temperley--Lieb algebra $\TL(\C_n)$.  Moreover, we showed that each admissible diagram of $\DTL(\C_n)$ corresponds to a unique monomial basis element of $\TL(\C_n)$.  Our diagrammatic realization of $\TL(\C_n)$ can be of great value when it comes to understanding the otherwise purely abstract structure of the algebra.  

It is natural to wonder whether a shorter proof of Theorem~\ref{thm:main result} exists.  As mentioned in Section~\ref{sec:intro}, independently verifying Property B~\cite{Green2007a} for $\H(\C_n)$ would likely lead to the construction of a basis of diagrams for $\TL(\C_n)$. 

As an application of our diagrammatic representation of $\TL(\C_n)$, we plan to model the approach in~\cite{Green2007} and construct a Jones-type trace on $\H(\C_n)$ that allows us to efficiently compute leading coefficients of the infinitely many Kazhdan--Lusztig polynomials indexed by pairs of FC elements. This will require a change of basis from the admissible diagrams described in this paper to the image of the canonical Kazhdan--Lusztig basis~\cite{Kazhdan1979} of $\H(\C_n)$. 

\end{section}


\section*{Acknowledgements}

I would like to thank R.M.~Green for suggesting Corollary~\ref{cor:finiteNonTypeIandII}.  I am also grateful to the referee for his or her careful reading of the paper and suggestions for improvements. 

\bibliographystyle{plain}
\bibliography{DiagCalc2}

\end{document}